		\Crefname{equation}{equation}{equations}
		\Crefname{page}{page}{pages}
	\numberwithin{equation}{section}
	\definecolor{Stefan}{RGB}{0,82,0}
	\definecolor{Marco}{RGB}{0,0,90}
\begin{document}


\title{Heegaard Floer correction terms, with a twist} 
\author{Stefan Behrens}
\author{Marco Golla}
\address{Mathematical Institute, Utrecht University\\Utrecht, Netherlands}
\address{Department of Mathematics, Uppsala University\\Uppsala, Sweden}
\date{\today}

\begin{abstract}
We use Heegaard Floer homology with twisted coefficients to define numerical invariants for arbitrary closed 3--manifolds equipped torsion spin$^c$ structures, generalising the correction terms (or $d$--invariants) defined by Ozsv\'ath and Szab\'o for integer homology 3--spheres and, more generally, for 3--manifolds with standard~$\HFoo$.
Our twisted correction terms share many properties with their untwisted analogues.
In particular, they provide restrictions on the topology of 4--manifolds bounding a given 3--manifold.
\end{abstract}

\maketitle



\section{Introduction}
	\label{ch:intro}
One of the most fascinating results in low dimensional topology is Donaldson's diagonalizability theorem for intersection forms of smooth $4$--manifolds; 
it asserts that any negative definite intersection form of a closed smooth $4$--manifold is diagonalisable over~$\Z$.
Both assumptions on the $4$--manifold, smoothness and closedness, are crucial.
On the one hand, an equally fascinating result of Freedman shows that every unimodular symmetric bilinear form appears as the intersection form of some closed \emph{topological} $4$--manifold.
On the other hand, an easy construction shows that any symmetric bilinear form is the intersection form of some smooth $4$--manifold with boundary. 
Note however that one cannot control the topology of the boundary. 
In this paper we are interested in the possible intersection forms of smooth $4$--manifolds bounding a fixed $3$--manifold.
%

\subsection{The main results}
A purely algebraic result of Elkies~\cite{Elkies_trivial_lattice_1995} shows that Donaldson's theorem can be rephrased as a family of inequalities
	$c_1^2(\fs)+b_2(X)\leq 0$
where $\fs$ runs through all \spinc structures on a closed smooth $4$--manifold~$X$.
It turns out that these inequalities admit generalisations to $4$--manifolds with boundary.
The first significant progress in this direction was made by Fr{\o}yshov~\cite{Froyshov_SW_boundary_1996} using Seiberg--Witten theory and later by Ozsv\'ath and Szab\'o~\cite{OzsvathSzabo_4mfs_gradings_2003} in the context of Heegaard Floer homology.
In this paper we will define a generalisation of the correction terms defined by Ozsv\'ath and Szab\'o, using Heegaard Floer homology with twisted coefficients: to any \spinc $3$--manifold $\Yt$ we associate a rational number $\ud\Yt$, called the \emph{twisted correction term} of $\Yt$. One of the main goals of the paper is to prove the following general result.
\begin{theorem}\label{T:intersection form bound}
Let~$(Z,\fs)$ be a smooth \spinc $4$--manifold with boundary $\Yt$, and suppose that $Z$ is negative semidefinite and $c_1(\ft)$ is torsion.
We have
	\begin{equation}\label{eq:intersection form bound}
	c_1^2(\fs)+b_2^-(Z) \,\le\, 4\dtw(Y,\ft)+2b_1(Y).
	\end{equation}
\end{theorem}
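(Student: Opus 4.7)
The plan is to adapt the Ozsv\'ath-Szab\'o proof of the analogous bound for 3--manifolds with standard $\HFoo$ \cite{OzsvathSzabo_4mfs_gradings_2003} to the twisted setting. First I would remove an open ball from $Z$ to obtain a smooth manifold $W = Z \setminus B^4$, viewed as a \spinc cobordism from $S^3$ to $Y$ carrying the restriction of $\fs$. Pulling back the twisted coefficient system used to define $\dtw(Y,\ft)$ along the inclusion $Y \hookrightarrow W$ then yields a $U$-equivariant cobordism map
\[
F \colon \HFoo(S^3) \longrightarrow \underline{\HFoo}(Y,\ft).
\]

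Next I would consider the element $U \in HF^-(S^3) \hookrightarrow \HFoo(S^3)$, which sits in grading $-2$. By naturality of the cobordism maps with respect to the long exact sequence connecting $HF^-$, $\HFoo$, and $HF^+$, the image $F(U)$ lies in the image of $\underline{HF}^-(Y,\ft) \to \underline{\HFoo}(Y,\ft)$, and hence has grading at most $\dtw(Y,\ft) - 2$ by the definition of the twisted correction term.

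The main obstacle is to verify the non-triviality $F(U) \neq 0$. In the untwisted setting this is achieved via a factorization argument exploiting the negative semidefiniteness of $Z$. In the twisted case, since $\underline{\HFoo}(Y,\ft) \cong \mathbb{F}[U,U^{-1}]$ for $\ft$ torsion (in the fully twisted setup), non-triviality should reduce to a statement about a single tower; ensuring that twisting by the coefficient system pulled back from $Y$ preserves this non-vanishing will be the core technical point.

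Granting non-triviality, the Ozsv\'ath-Szab\'o grading shift formula gives
\[
\frac{c_1(\fs)^2 - 2\chi(W) - 3\sigma(W)}{4} \leq \dtw(Y,\ft).
\]
The theorem then follows from a topological computation: using $\chi(W) = \chi(Z) - 1$, $\sigma(W) = -b_2^-(Z)$, and the identity $\chi(Z) = 1 - 2b_1(Z) + b_2^-(Z) + b_1(Y)$ (which follows from the long exact sequence of the pair $(Z,Y)$ and Poincar\'e-Lefschetz duality, using that $b_3(Z) = 0$ and that the radical of the intersection form has dimension $b_1(Y) - b_1(Z)$), one rearranges to obtain
\[
c_1^2(\fs) + b_2^-(Z) \leq 4\dtw(Y,\ft) + 2b_1(Y) - 4b_1(Z) \leq 4\dtw(Y,\ft) + 2b_1(Y),
\]
as required.
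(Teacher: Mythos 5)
Your reduction to a cobordism $W=Z\setminus B^4$ from $S^3$ to $Y$ and the final grading/Euler-characteristic arithmetic match the paper's endgame, but the argument has a genuine gap at exactly the point you flag as "the core technical point": the non-vanishing of the cobordism map on $\HFoo$. This is not a detail to be checked but the entire content of the paper's \cref{ch:negative semidefinite cobordisms}. In the untwisted setting Ozsv\'ath--Szab\'o only establish the analogous non-vanishing for manifolds with standard $\HFoo$, precisely because the argument proceeds by a handle decomposition in standard ordering and one must control what each 2--handle does to $\HFoo$; the whole point of this paper is to remove that hypothesis, and doing so requires the twisted surgery triangles (\cref{p:twisted_triangle}), the case analysis of \cref{T:2-handles_torsion} according to how $b_1$ changes under surgery, and --- for 2--handles attached along homologically essential knots --- the new notion of $V$--standard $\tHFoo$ together with the Shapiro's-lemma rank bound in \cref{T:2-handles_essential}. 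None of this is supplied or sketched in your proposal.

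There is also a structural problem with your setup. The twisted cobordism map induced by $W$ does not land in the fully twisted group $\HFtoo\Yt$ used to define $\dtw\Yt$: by \eqref{eq:cobordism module} its target is $\HFtoo(Y,\ft;B_W)$ with $B_W=\bF[H_2(Y)_W]$, where $H_2(Y)_W$ is the image of $H_2(Y)$ in $H_2(W)$, and this image is typically a proper (often small) subgroup of $H_2(Y)$ --- e.g.\ for $Z=\Sigma_g\times D^2$ it has rank one. So the target is a partially twisted group, isomorphic to $\Lambda^*V\otimes\Too$ rather than a single tower, and one must both prove non-vanishing there and then translate the conclusion back into a statement about the fully twisted correction term; this is exactly what the universal coefficient spectral sequence and the $M_V$ coefficient modules accomplish in the paper, and it is why the paper proves the stronger, telescoping \cref{t:main_inequality} handle by handle rather than running a single factorization argument for $W$ at once. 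As written, your "pull back the fully twisted coefficient system" step is not available, and the claim that non-triviality "reduces to a statement about a single tower" is not correct for the actual target of the map.
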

As indicated above, similar inequalities were obtained by Fr{\o}yshov~\cites{Froyshov_SW_boundary_1996,Froyshov_monopolehomology_2010} for rational homology $3$--spheres and by 
Ozsv\'ath and Szab\'o~\cite{OzsvathSzabo_4mfs_gradings_2003} for $3$--manifolds with ``standard~$\HFoo$'' (see \cref{ch:standard HFoo} below).
Our approach is very similar to the one taken by Ozsv\'ath and Szab\'o, but it turns out that the use of twisted coefficients allows us to work with arbitrary $3$--manifolds.
The proof of \cref{T:intersection form bound} occupies 
\cref{ch:HF_review,ch:twisted correction terms,ch:negative semidefinite cobordisms},
including a brief review of Heegaard Floer homology with twisted coefficients and a discussion of the twisted correction terms and their properties.

Starting with \cref{ch:intersection_forms} we return to intersection forms of smooth $4$--manifolds with boundary. 
As a sample, we mention the following result although we actually prove a stronger statement in \cref{T:finiteness theorem_almost even}.
\begin{theorem}\label{T:finiteness theorem}
For any closed, oriented $3$--manifold~$Y$ there are only finitely many isometry classes of even, semidefinite symmetric bilinear forms that can appear as intersection forms of smooth $4$--manifolds bounded by~$Y$.
\end{theorem}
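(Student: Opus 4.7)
The plan is to use \cref{T:intersection form bound} to bound the rank of the intersection form, then to bound the determinant of its induced nondegenerate quotient by a topological invariant of $Y$, and finally to invoke the classical finiteness of integral lattices of bounded rank and determinant.

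Suppose $Z$ is a smooth 4--manifold bounded by $Y$ with even semidefinite intersection form $Q_Z$. After reversing orientation if necessary we may assume $Z$ is negative semidefinite, since the same argument applied to $\bar Z$ bounding $-Y$ handles the other case. Evenness of $Q_Z$ forces $w_2(Z)=0$, so $Z$ is spin, and any choice of spin structure yields a \spinc structure $\fs$ on $Z$ with $c_1(\fs)=0$, restricting to a spin \spinc structure $\ft=\fs|_Y$ on $Y$ whose first Chern class is trivially torsion. \Cref{T:intersection form bound} then gives
\[
\mathrm{rank}(Q_Z) \;=\; b_2(Z) \;=\; b_2^-(Z) \;\leq\; 4\,\dtw(Y,\ft) + 2b_1(Y),
\]
and since spin \spinc structures on $Y$ form a torsor over the finite group $H^1(Y;\Z/2)$, the right-hand side is bounded by a constant $C(Y)$ depending only on $Y$.

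Next, the radical of $Q_Z$ on $H_2(Z;\Z)/\mathrm{tors}$ coincides with the image of $H_2(Y;\Z) \to H_2(Z;\Z)$ modulo torsion, and the induced quotient form $\bar Q$ on $\bar L$ is nondegenerate, even, and negative definite. A diagram chase in the long exact sequence of the pair $(Z,Y)$, combined with Poincar\'e--Lefschetz duality $H_2(Z,Y)\cong H^2(Z)$, identifies the cokernel of the adjoint map $\bar L \to \bar L^*$ (whose order is $|\det \bar Q|$) with a subquotient of the torsion part of $H_1(Y;\Z)$; in particular $|\det \bar Q|$ divides $|\mathrm{Tors}\,H_1(Y;\Z)|$, an invariant of $Y$ alone. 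An even semidefinite lattice then decomposes orthogonally as its radical (determined up to isometry by its rank) and its nondegenerate quotient, and it is classical that there are only finitely many isometry classes of even definite lattices of bounded rank and bounded determinant. The finiteness of possible $Q_Z$ follows.

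The only nontrivial point is the determinant bound: the rank estimate is immediate from \cref{T:intersection form bound} together with the finiteness of spin structures on $Y$, whereas verifying that $|\det \bar Q|$ is controlled purely by $Y$ (and not by extra data of $Z$) requires carefully disentangling free and torsion contributions in the long exact sequence of $(Z,Y)$. This is straightforward when $b_1(Y)=0$, but in general one has to check that the free rank of the cokernel of $H_2(Z)/\mathrm{tors}\to H^2(Z)/\mathrm{tors}$ does not contribute to $|\det \bar Q|$, leaving only the torsion of $H_1(Y)$ as the source of the bound.
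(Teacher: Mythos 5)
Your argument is essentially the paper's own proof: bound the rank of the definite part by applying \cref{T:intersection form bound} to a \spinc structure with vanishing characteristic covector, bound the determinant by the order of the torsion of $H_1(Y;\Z)$ via the exact sequence of the pair $(Z,Y)$, and invoke the classical finiteness of lattices of bounded rank and determinant. Two small imprecisions are worth repairing, though neither affects the conclusion. First, evenness of $Q_Z$ does not imply $w_2(Z)=0$ (already for closed manifolds this fails in the presence of 2--torsion, e.g.\ for the Enriques surface); what you actually need is a \spinc structure $\fs$ with $\kappa_\fs=0$, which exists because $0\in\chi^*(S_Z)$ for an even form and every characteristic covector is realised as some $\kappa_\fs$ -- this is exactly the content of \cref{T:intersection form bound_characteristic version}. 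Second, the equality $b_2(Z)=b_2^-(Z)$ is false whenever the radical of $Q_Z$ is nontrivial, i.e.\ whenever $H_2(Y;\Q)\to H_2(Z;\Q)$ is nonzero; \cref{T:intersection form bound} bounds only $b_2^-(Z)$, the rank of the nondegenerate quotient, while the rank of the radical is bounded separately by $b_1(Y)$ via your own identification of the radical with the image of $H_2(Y)$. With those two corrections the proof is complete and coincides with the one in the paper.
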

Note that \cref{T:finiteness theorem} cannot hold for \emph{topological} $4$--manifolds. 
Indeed, using Freedman's result one can add arbitrary unimodular summands to the intersection form of any given $4$--manifold by connect summing with suitable closed topological $4$--manifolds.
So the finiteness in \cref{T:finiteness theorem} is an inherently smooth phenomenon.

In \cref{ch:applications} we turn to some concrete examples and give some further applications.
In particular, for a surface~$\Sigma_g$ of arbitrary genus~$g$ we compute the twisted correction terms of~$\Sigma_g\times S^1$ -- which has non-standard~$\HFoo$ for~$g\geq1$ -- and use \cref{T:intersection form bound} to deduce the following.

\begin{theorem}\label{T:fillings of T3}
Let $Z$ be a smooth $4$--manifold with boundary~$T^3$ or~$\Sigma_2\times S^1$.
If the intersection form~$Q_Z$ is negative semidefinite and even, then its non-degenerate part is either trivial or isometric to~$E_8$, and both of these occur.
\end{theorem}

Again, we actually prove a slightly stronger statement (\cref{T:fillings of T3_almost even}).


\subsection{Notation and terminology}
By default, all manifolds assumed to be smooth, compact, connected, and oriented.
The letter~$Y$ will always indicate a closed $3$--manifold.
Similarly, we reserve
	~$Z$ for $4$--manifolds with connected boundary, and
	~$W$ for cobordisms between non-empty $3$--manifolds;
and if $Y=\del Z$, we refer to $Z$ as a \emph{filling} of~$Y$.
%
%
Spin$^c$ structures on $3$--manifolds will be denoted by~$\ft$ and those on $4$--manifolds by~$\fs$.
If~$\Yt$ is the \spinc boundary of~$\Zs$, then we call the latter a \emph{\spinc filling}.
Lastly, for $3$-- or $4$--manifold with torsion-free second cohomology we write~$\ft_0$ or~$\fs_0$ for the unique \spinc structure with trivial first Chern class, provided that they exist.

\subsection*{Acknowledgements} We would like to thank Paolo Lisca, Bruno Martelli, and Andr\'as Stipsicz for their encouragement; Filippo Callegaro, Andrea Maffei, and Danny Ruberman for helpful conversations.
A special thanks goes to Adam Levine for pointing out a mistake in an earlier proof of \cref{p:standardHFoo}.
The work on this project began at the MTA Alfréd Rényi Institute of Mathematics where both authors were supported by the ERC grant LDTBud; the second author received additional support from the PRIN--MIUR research project 2010--11 
\textit{``Variet\`a reali e complesse: geometria, topologia e analisi armonica''}, the FIRB research project
\textit{``Topologia e geometria di variet\`a in bassa dimensione''}, and the Alice and Knut Wallenberg Foundation.


\section{Review of Heegaard Floer homology}
	\label{ch:HF_review}

We recall some relevant definitions and facts about Heegaard Floer homology with twisted coefficients. 
The basic references for this material are \cite{OzsvathSzabo_HF_properties_2004}*{Section~8} and \cite{JabukaMark_product_formulae_2008}.
We will pay special attention to the role of ground rings.

\subsection{Twisted coefficients}

Fix a ground ring $\bF$; usually $\bF=\Z$, $\Q$ or $\bF_p$ for some prime $p$. Let~$Y$ be a closed, oriented $3$--manifold equipped with a \spinc structure~$\ft\in\Spinc(Y)$.
The input for Heegaard Floer theory is a Heegaard diagram~$(\Sigma,\bsa,\bsb)$ with some extra decorations (see~\cites{OzsvathSzabo_HF_definition_2004,OzsvathSzabo_HF_properties_2004} for details; for instance, we will suppress the basepoint from the notation).
The output is a short exact sequence of chain complexes
	\begin{equation}\label{eq:CF complexes}
	0 \lra \CFtm\Yt_\bF \lrao{\iota} \CFtoo\Yt_\bF \lrao{\pi} \CFtp\Yt_\bF \lra 0
	\end{equation}
over the ring $\bF[U]\otimes_\bF\bF[H_2(Y)]$ whose homology groups, denoted by~$\HFto\Yt_\bF$, are an invariant of~$\Yt$ known as \emph{Heegaard Floer homology} with \emph{fully twisted coefficients} in~$\bF$.
Following~\cite{JabukaMark_product_formulae_2008} we write $R_Y=\bF[H_2(Y)]$ so that $\bF[U]\otimes_\bF\bF[H_2(Y)]$ becomes~$R_Y[U]$; we also use the common shorthand notation
	\begin{equation*}
	\Tm=U\cdot\bF[U],
	\quad
	\Too=\bF[U,U\inv],
	\eqand
	\Tp=\bF[U,U\inv]\big/U\cdot\bF[U]
	\end{equation*}
for the $\bF[U]$--modules that have become known as \emph{towers}.
We think of them as relatively $\Z$--graded such that multiplication by~$U$ has degree~$-2$ and a subscript $\mc T^\circ_d$ indicates that $U^k$ lies in grading~$d-2k$.

For any $R_Y$--module $M$ one can further define Heegaard Floer homology homology groups with coefficients in~$M$
	\begin{equation}\label{eq:HF with coefficients}
	\tHFo(Y,\ft;M)_\bF= H_*\big( \CFto\Yt_\bF\otimes_{R_Y}M \big).
	\end{equation}
The most common choices for~$M$ is the ground ring~$\bF$ itself with the trivial $R_Y$--action. 
This yields the \emph{untwisted} Heegaard Floer homology groups~$\HFo\Yt_\bF$.
In all other cases it has become customary to speak of \emph{twisted coefficients}.
Note that for~$M=R_Y$ one recovers the fully twisted homology groups~$\HFto\Yt_\bF$.
We will usually suppress the ground ring in the subscript from the notation whenever this does not cause confusion, but at times this more precise notation will be convenient.
It follows from general principles of homological algebra that \eqref{eq:CF complexes} induces a long exact sequence of $R_Y[U]$--modules
	\begin{equation}\label{eq:HF long exact sequence}
	\cdots \ra 
	\HFtm(Y,\ft;M)_\bF \lrao{\iota_*} 
	\HFtoo(Y,\ft;M)_\bF \lrao{\pi_*} 
	\HFtp(Y,\ft;M)_\bF \lrao{\delta} 
	\cdots
	\end{equation}
while \eqref{eq:HF with coefficients} gives rise to a \emph{universal coefficient spectral sequence}
	\begin{equation}\label{eq:UCSS basic}
	E^2_{*,*}=\Tor^{R_Y}_*\big( \HFto_*\Yt_\bF, M \big) \Longrightarrow \HFto(Y,\ft;M)_\bF
	\end{equation}
which highlights the universal role of~$\HFto\Yt_\bF$.
We will mostly work with the fully twisted theory.
As explained in \cite{JabukaMark_product_formulae_2008}*{Section~3}, the groups $\HFto\Yt_\bF$ carry a relative $\Z$--grading.
Moreover, if $c_1(\ft)$ is torsion, then the relative $\Z$--grading can be lifted to an absolute $\Q$--grading \cite{OzsvathSzabo_4mfs_triangles_2006}*{Section~7}.
We also recall the following result due to Ozsváth and Szabó which is of fundamental importance for our work.
\begin{theorem}[\cite{OzsvathSzabo_HF_properties_2004}*{Theorem~10.12}]\label{T:twisted HF infinity}
If~$c_1(\ft)$ is torsion, then there is a unique equivalence class of orientation systems such that
	\begin{equation*}
	\HFtoo\Yt_\bF\cong \bF[U,U\inv] =\Too
	\end{equation*}
as $R_Y[U]$--modules with a trivial $R_Y$--action on~$\Too$.
\end{theorem}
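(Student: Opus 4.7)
The plan is to reduce the computation to the case of rational homology spheres and $S^1 \times S^2$, using a combination of direct computation in a model, a twisted Künneth formula, and the topological classification of torsion spin$^c$ structures.

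\textbf{Step 1: Base case.} First I would handle the case $b_1(Y)=0$, i.e.\ $Y$ is a rational homology sphere. Here $H_2(Y;\Z)$ is finite torsion, so $R_Y = \bF$ with trivial structure, and the fully twisted theory agrees with the untwisted one. In this setting, the absolute grading is defined for every $\ft \in \Spinc(Y)$, and the standard structure theorem for $\HFoo$ of rational homology spheres (\cite{OzsvathSzabo_HF_properties_2004}) gives $\HFoo(Y,\ft)_\bF \cong \Too$; since $R_Y = \bF$, this is the desired conclusion, with a unique orientation system because $H^1(Y;\Z/2)$ already acts trivially on the set of isomorphism classes.

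\textbf{Step 2: Reducing the rank of $H^1$.} For $b_1(Y) > 0$, I would argue by induction on $b_1(Y)$, reducing to the previous case by picking a primitive class $\alpha \in H_1(Y;\Z)$ and performing surgery along a simple closed curve representing~$\alpha$. The surgery exact triangle, applied with fully twisted coefficients, relates $\HFtoo(Y,\ft)$ to the fully twisted $\HFoo$ of the two other manifolds in the triangle. One of these can be taken to have smaller $b_1$; the other is a connected sum of $Y_0$ with $S^1\times S^2$. Thus the induction reduces everything to understanding how $\HFtoo$ behaves under (i) connected sum with $S^1\times S^2$ and (ii) the coboundary in the triangle. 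For (i), one establishes a Künneth formula for fully twisted coefficients and uses it to reduce the computation of $\HFtoo(Y\#(S^1\times S^2), \ft\#\ft_0)_\bF$ to that of $\HFtoo(S^1\times S^2,\ft_0)_\bF$.

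\textbf{Step 3: Model computation for $S^1\times S^2$.} I would compute $\HFtoo(S^1\times S^2,\ft_0)$ from the standard genus-one Heegaard diagram in which $\alpha$ and $\beta$ are parallel isotopic curves meeting in two points. Here $R_Y = \bF[T,T^{-1}]$, and the twisted differential on $\CFtoo$ has the schematic form $(1-T)\cdot U^k$, where the factor $(1-T)$ records the homology class of the periodic domain between the two intersection points. The homology of this $(1-T)$--complex, as an $R_Y[U,U^{-1}]$--module, is exactly the quotient $\bF[T,T^{-1},U,U^{-1}]/(1-T) \cong \Too$ with the trivial $R_Y$--action, which is the desired answer. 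Note that this is the point where fully twisted coefficients cancel the exterior algebra factor $\Lambda^* H^1$ that appears in the untwisted computation.

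\textbf{Step 4: Uniqueness of the orientation system.} The set of coherent orientation systems is a torsor over $H^1(Y;\Z/2)$, and twisting the differential by a cocycle $\varepsilon$ has the effect of replacing $R_Y$--valued coefficients by their twists $(R_Y)_\varepsilon$. A direct argument on the model of Step~3 shows that a wrong choice of $\varepsilon$ replaces $(1-T)$ by $(1+T)$ in some summand and makes the homology vanish (over $\bF$ of characteristic $\neq 2$) or remain $\Too$ (in characteristic~$2$, where the statement is automatic). Combining this with invariance under the surgery triangle singles out exactly one equivalence class of orientation system that yields the clean answer~$\Too$.

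\textbf{Main obstacle.} The hardest part is keeping track of signs and the precise cocycle data throughout the surgery/Künneth arguments to pin down the unique orientation system; the underlying computation of the homology is genuinely straightforward only after all the twisting data have been lined up correctly. A secondary subtlety is verifying the Künneth formula with fully twisted coefficients, which requires choosing a compatible orientation system on the connected sum.
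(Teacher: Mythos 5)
First, a point of order: the paper does not prove this statement at all --- it is quoted verbatim from Ozsv\'ath--Szab\'o (\cite{OzsvathSzabo_HF_properties_2004}*{Theorem~10.12}) as an external input, so there is no internal proof to compare against. Your overall strategy (base case $b_1(Y)=0$, induction on $b_1$ via surgery exact triangles, a model computation for $S^1\times S^2$) is in the spirit of the original argument, but as written it has genuine gaps.

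The main one is in Step~2. Surgery along a curve representing a primitive class $\alpha\in H_1(Y)$ does \emph{not} in general produce a connected sum of anything with $S^1\times S^2$; that only happens when the curve bounds appropriately, so the K\"unneth reduction to $S^1\times S^2$ is not available. Relatedly, an exact triangle in which two corners are known does not determine the third: the entire content of the inductive step is identifying the map between the two smaller-$b_1$ corners (in sufficiently high degree it is an isomorphism composed with multiplication by $1-T$, where $T$ corresponds to the capped-off surface class acting trivially on $\HFoo$ --- compare the discussion of case~(2) in the proof of \cref{T:2-handles_torsion} and \cref{p:twistedcomputation} in this paper), and then computing kernel and cokernel of $1-T$ on $\Too\otimes\bF[T,T^{-1}]$. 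There is also a coefficient-bookkeeping problem you elide: in the twisted triangle (\cref{p:twisted_triangle}) the corner corresponding to the $b_1$--increasing surgery carries coefficients in $M_K=\bF[H_2(Y\setminus\nu K)]$, which is a proper quotient of $R_Y$ when $[K]$ has infinite order, so the triangle does not directly output the \emph{fully} twisted group; one must set up the triple so that the manifold whose fully twisted homology is being computed sits at the $0$--framed corner. Finally, Step~4 is not quite right: with the wrong orientation system the differential $(1+T)$ is still a non-zero-divisor in $\bF[T,T^{-1}]$, so the homology does not vanish in odd characteristic --- it is again a single tower, but with $T$ acting by $-1$; the uniqueness assertion is precisely about which orientation class yields the \emph{trivial} $R_Y$--action, not about nonvanishing.
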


\subsection{Cobordism maps}

Now let $\Ws$ be a \spinc cobordism from~$\Yt$ to~$(Y',\ft')$.
It is well-known that for any~$R_Y$--module~$M$ there are induced \emph{cobordism maps}
	\begin{equation*}
	F_{W,\fs;M}^\circ \colon \HFto(Y,\ft;M) \lra \HFto(Y',\ft';M(W)).
	\end{equation*}
The $R_{Y'}$--module $M(W)$ used in the target is defined as follows.
We consider the $(R_Y,R_Y')$--bimodule 
	\begin{equation*}
	B_W=\bF\big[H_2(Y)_W+H_2(Y')_W\big] \subset \bF[H_2(W)]
	\end{equation*}
where $H_2(Y)_W$ denotes the image of the map~$H_2(Y)\ra H_2(W)$ induced by inclusion (and similarly for~$Y'$) and define
	\begin{equation}\label{eq:cobordism module}
	M(W)= M \otimes_{R_Y} B_W.
	\end{equation}
For example, in the fully twisted case $M=R_Y$ we have~$R_Y(W)=B_W$ and we denote the cobordism map by
	\begin{equation*}
	\Ft^\circ_{W,\fs}\colon \HFto\Yt \ra \HFto(Y',\ft';B_W).
	\end{equation*}
These cobordism maps will play an important role in the proof of \cref{T:intersection form bound}.
Notice that, in contrast to the untwisted cobordism maps, the target of~$\Ft^\circ_{W,\fs}$ depends not only on~$(Y',\ft')$ but also on the cobordism~$W$ itself.
\begin{remark}\label{R:cobordism coefficient modules}
There are slightly different definitions of~$M(W)$ in the literature.
Ours is essentially the same as in~\cite{OzsvathSzabo_4mfs_triangles_2006}*{Section~2.7} except that we work in the Poincaré dual picture (using $H_2$ instead of $H^1$).
Another difference appears in~\cite{JabukaMark_product_formulae_2008}*{Section~2.2} where the the $R_{Y'}$--module~$\overline{M}\otimes_{R_Y}B_W$ is used.
Here $\overline{M}$ stands for~$M$ with the \emph{conjugate} $R_Y$--module structure (for which $h\in H_2(Y)$ acts as~$-h$).
However, note that the conjugation only affects the $R_Y$--module structure and that we have $\overline{M}\otimes_{R_Y}B_W\cong M\otimes_{R_Y}B_W$ as $R_{Y'}$--modules.
\end{remark}

\subsection{A connected sum formula for fully twisted coefficients}
We work over a fixed ground field~$\bF$. 
%
The following is a generalisation of the connected sum formula in Heegaard Floer homology (see~\cite[Theorem 6.2]{OzsvathSzabo_HF_properties_2004}) to fully twisted coefficients.

\begin{proposition}\label{t:twistedKunneth}
Let $(Y_1,\ft_1)$ and $(Y_2,\ft_2)$ be \spinc $3$--manifolds, and let $\tCFm(Y_i, \ft_i)$ be the usual chain complex computing $\tHFm(Y_i,\ft_i)$ for $i=1,2$.
Then there is an isomorphism
\begin{equation}\label{e:twistedKunneth}
\tHFm(Y_1\#Y_2,\ft_1\#\ft_2) \cong H_*\big(\tCFm(Y_1,\ft_1) \otimes_{\bF[U]} \tCFm(Y_2,\ft_2)\big)[2]
\end{equation}
where~$[2]$ indicates a grading shift by~$2$.
\end{proposition}


In the proof we will use the shorthand notation $\bS_n$ for the $3$--manifold $\#^n S^1\times S^2$ which will also appear later on. 

\begin{proof}
%
We argue as in the proof of~\cite[Theorem 6.2]{OzsvathSzabo_HF_properties_2004} to which we refer for further details and notation.
As in the untwisted case, 
it is more convenient to study the complex $\tCF^{\le 0}$ instead of $\tCFm$. 
This explains the degree shift in~\eqref{e:twistedKunneth}: $\tCF^{\le0}$ is just $\tCFm$ with a grading shift.
The main difference between the twisted and untwisted cases lies in the definition of the twisted coefficients map
\[
\tGam: \tCF^{\le0}(Y_1,\ft_1) \otimes_{\bF[U]} \tCF^{\le0}(Y_2,\ft_2) \to \tCF^{\le0}(Y_1\#Y_2,\ft_1\#\ft_2),
\]
the analogue of the untwisted map $\Gamma$.
The map $\tGam_0$ corresponding to $\Gamma_0$ in the proof of~\cite[Theorem 6.2]{OzsvathSzabo_HF_properties_2004} is defined in the same way, as the `closest point map'.
Once we have constructed $\tGam$, the rest of the argument follows verbatim, and we refer the reader to the original proof; we therefore focus only on the construction of $\tGam$.

Choose a Heegaard diagram $(\Sigma_i,\bsa_i,\bsb_i)$ for $Y_i$, where $\Sigma_i$ has genus $g_i$.
As in the untwisted case, consider the triple Heegaard diagram
\[
(\Sigma, \bsa, \bsb, \bsc) = (\Sigma_1\#\Sigma_2, \bsa_1\cup\bsa'_2, \bsb_1\cup\bsa_2, \bsb_1'\cup\bsb_2),
\]
where the primes denote small Hamiltonian perturbations.
It is immediate to check that $(\Sigma,\bsa,\bsb)$ represents $\ltilde Y_1 := Y_1\#\bS_{g_2}$, while $(\Sigma,\bsb,\bsc)$ represents $\ltilde Y_2 := Y_2\#\bS_{g_1}$, and $(\Sigma,\bsa,\bsc)$ represents $Y_1\#Y_2$.
Let $R_i := R_{Y_i}$ for $i=1,2$. 
Using the canonical splitting $H_2(\ltilde Y_i) = H_2(Y_i) \oplus H_2(\bS_{g_{3-i}})$ we can consider $R_i$ as an $R_{\ltilde Y_i}$--module.
Following the discussion in~\cite[Section 8.2.2]{OzsvathSzabo_HF_properties_2004} we see that $(\Sigma, \bsa, \bsb, \bsc)$ induces a map
\[
f^{\le0}_{\bsa,\bsb,\bsc}: \tCF^{\le0}(\ltilde Y_1,\ft_1;R_1) \otimes_{\bF[U]} \tCF^{\le0}(\ltilde Y_2,\ft_2; R_2)
\to \tCF^{\le0}(Y_1\#Y_2,\ft_1\#\ft_2).
\]
To conclude the proof, we need to find a map $\Phi_i:\tCF^{\le0}(Y_i,\ft_i) \to \tCF^{\le0}(\ltilde Y_1,\ft_i; R_i)$.
In fact, it is an easy check that $\tCF^{\le0}(\ltilde Y_i,\ft_i; R_i) \cong \tCF^{\le0}(Y_i,\ft_i)\otimes_{\bF[U]} \CF^{\le0}(\bS_{g_{3-i}})$, and the latter factor has a canonical top degree generator $\Theta_i$, which is the same considered in the proof of~\cite[Theorem 6.2]{OzsvathSzabo_HF_properties_2004}.
This gives the desired embeddings.

\end{proof}

\subsection{Twisted surgery triangles}

Let $K$ be a knot in a $3$--manifold~$Y$. 
We write $Y_\lambda=Y_\lambda(K)$ for the $\lambda$--framed surgery on~$K$ and $W_\lambda=W_\lambda(K)$ for the corresponding surgery cobordism from~$Y$ to~$Y_\lambda$.
We also write $Y_K=Y\setminus\nu K$ for the knot exterior and let~$M_K=\bF[H_2(Y_K)]$.

\begin{lemma}\label{T:surgery module}
For each framing~$\lambda$, $M_K$ has a $\bF[H_2(Y_\lambda)]$--module structure.
\end{lemma}
\begin{proof}
Suppose first that~$K$ represents a torsion class in~$H_1(Y)$.
Then there is a canonical isomorphism $H_2(Y)\cong H_2(Y_K)$ induced by the inclusion.
Similarly, if~$b_1(Y_\lambda)=b_1(Y)$, then the same reasoning applies to~$H_2(Y_\lambda)$.
If on the other hand~$b_1(Y_\lambda)>b_1(Y)$, then we have a split injection $H_2(Y_K)\ra H_2(Y_\lambda)$.
More precisely, the choice of a rational Seifert surface for~$K$ gives rise to a splitting $H_2(Y_\lambda) = H_2(Y_K)\oplus \Z[{S}]$ so that $H_2(Y_K)\cong H_2(Y_\lambda)/[{S}]$ which yields the desired module structure.
Similarly, if~$K$ has infinite order in~$H_1(Y)$, the inclusion~$Y_K\hookrightarrow Y_\lambda$ induces canonical isomorphisms~$H_2(Y_K)\cong H_2(Y_\lambda)$ and a splitting~$H_2(Y)\cong H_2(Y_K)\oplus\Z$, the latter induced by the choice of a primitive element~$\xi\in H_2(Y)$ with $\xi\cdot[K]\neq0$.
\end{proof}

\begin{proposition}\label{p:twisted_triangle}
As above, let $(K,\lambda)$ be a framed knot in a $3$--manifold~$Y$.
Then there is an exact triangle of the form
\[
\xymatrix{
\tHFp(Y;M_K)\ar[rr]^F & & \tHFp(Y_\lambda;M_K)\ar[dl]^G\\
 & \tHFp(Y_{\lambda+\mu};M_K)\ar[ul]^H
}
\]
where the maps $F$, $G$, and $H$ are induced by surgery cobordisms.
\end{proposition}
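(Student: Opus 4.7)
The plan is to adapt the standard proof of the surgery exact triangle~\cite{OzsvathSzabo_HF_properties_2004}*{Section~9} to the twisted setting, with the module~$M_K$ playing the role of a common coefficient system across the three vertices. I would start by fixing a Heegaard quadruple $(\Sigma,\boldsymbol\alpha,\boldsymbol\beta,\boldsymbol\gamma,\boldsymbol\delta,z)$ subordinate to $(K,\lambda)$ in which the subsystems $(\boldsymbol\alpha,\boldsymbol\beta)$, $(\boldsymbol\alpha,\boldsymbol\gamma)$, $(\boldsymbol\alpha,\boldsymbol\delta)$ are Heegaard diagrams for $Y$, $Y_\lambda$, $Y_{\lambda+\mu}$ respectively, with $\boldsymbol\gamma$ obtained from $\boldsymbol\beta$ by the $\lambda$-surgery on the framing curve and $\boldsymbol\delta$ from $\boldsymbol\gamma$ by adding a meridian. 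The three triples $(\boldsymbol\alpha,\boldsymbol\beta,\boldsymbol\gamma)$, $(\boldsymbol\alpha,\boldsymbol\gamma,\boldsymbol\delta)$, $(\boldsymbol\alpha,\boldsymbol\delta,\boldsymbol\beta)$ then realise precisely the three surgery cobordisms producing the maps $F$, $G$, $H$.

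Next, using the module structures supplied by the preceding lemma, I would form $\CFtp(Y_?;M_K)=\CFtp(Y_?)\otimes_{R_{Y_?}}M_K$ for each of the three surgeries, and define $F$, $G$, $H$ by counting pseudo-holomorphic triangles in the corresponding Heegaard triples, each weighted by its image in~$M_K$ under the natural map from $\bF[H_2]$ of the ambient triple cobordism to~$M_K$ coming from the collapse to $H_2(Y_K)$. Exactness then follows the standard two-step pattern: (i)~each consecutive composition is null-homotopic via a count of holomorphic rectangles in the full quadruple, which after the single handle-slide degenerates to the standard model computation on a genus-one summand; (ii)~the algebraic mapping-cone lemma (\cite{OzsvathSzabo_HF_properties_2004}*{Lemma~4.2}) upgrades the null-homotopies to an exact triangle, provided the mapping cone of, say, $F$ is quasi-isomorphic to $\tHFp(Y_{\lambda+\mu};M_K)$ via~$G$, which one checks by identifying the associated graded with a model computation in the Heegaard triple.

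The main obstacle I expect is coefficient bookkeeping rather than any new analytic input. The delicate point is verifying that the three weight homomorphisms $\bF[H_2(X_i)]\to M_K$, one for each triple cobordism~$X_i$, are mutually compatible, so that $F$, $G$, $H$ genuinely take values in the same module~$M_K$ and compose as claimed. Since each~$X_i$ deformation retracts onto a space containing the knot exterior~$Y_K$, and the relevant Mayer--Vietoris maps on~$H_2$ are precisely the ones encoded in the preceding lemma, this compatibility is implicit there; once it is invoked carefully at each step, the remaining holomorphic-curve analysis carries over from the untwisted case essentially verbatim.
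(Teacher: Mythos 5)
Your proposal is correct and follows essentially the same route as the paper, which simply observes that the proof of the twisted surgery exact triangle in \cite{OzsvathSzabo_HF_properties_2004}*{Theorem~9.21} goes through with coefficients in~$M_K$, the only point needing care being that the model computation (their Proposition~9.22) and the coefficient bookkeeping remain valid — precisely the "delicate point" you identify and resolve via the module structures from the preceding lemma. The paper compresses all of this into a citation, whereas you spell out the Heegaard quadruple and mapping-cone details explicitly; the content is the same.
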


\begin{proof}
The proof of the twisted exact triangle~\cite{OzsvathSzabo_HF_properties_2004}*{Theorem~9.21} works here with only minor modification; more precisely, one only needs observe that the proof of \cite{OzsvathSzabo_HF_properties_2004}*{Proposition~9.22} applies also with coefficients in~$M_K$.
\end{proof}

Since the map~$H_1(\del Y_K)\ra H_1(Y_K)$ has rank~1, there is an essential simple closed curve~$\lambda_0\subset \del Y_K$, well-defined up to isotopy, that has finite order in~$H_1(Y_K)$.
By a slight abuse of notation we refer to~$\lambda_0$ as the \emph{0--framing} although it might not actually be a framing in general.
If either $\lambda$, $\mu$, or~$\lambda+\mu$ agrees with~$\lambda_0$, then some care has to be taken when using \cref{p:twisted_triangle} in the context of fully twisted coefficients.
Let~$\gamma\in\{\lambda,\mu,\lambda+\mu\}$ and write~$R_\gamma$ for~$R_{Y_\gamma}=\bF[H_2(Y_\gamma)]$ where $Y_\mu=Y$.
Notice that when $\gamma$ is not the 0--framing, then~%
$M_K\cong R_\gamma$ and thus $\tHFo(Y_\gamma,\ft;M_K) = \tHFo(Y_\gamma,\ft)$.
In particular, if $\lambda_0\notin\{\lambda,\mu,\lambda+\mu\}$, then \cref{p:twisted_triangle} provides a triangle for fully twisted coefficients.
On the other hand, if $\lambda$ is the 0--framing, in which case~$K$ has to be rationally null-homologous, then only~$Y$ and~$Y_{\lambda+\mu}$ appear with fully twisted coefficients.
But we can give a fairly explicit computation of the group $\tHFoo(Y_\lambda,\ft;M_K)$ which will be useful in conjunction with~\cref{p:twisted_triangle}. 
In fact, we have a free resolution of~$M_K$ as an $R_\lambda$--module
\[
\xymatrix{
0\ar[r] & {R_\lambda}\ar[r]^{\cdot(1-[\widehat{S}])} & {R_\lambda}\ar[r] & M_K\ar[r] & 0
}
\]
showing that $\Tor^{R_\lambda}_*(M_K,\Too) = \Too\oplus\Too[1]$ whenever the $R_\lambda$--action on $\Too$ is trivial (as it is in our case), and the universal coefficient spectral sequence collapses at the second page. We have thus proved the following.

\begin{proposition}\label{p:twistedcomputation}
When $\lambda$ is the $0$--framing, $\tHFoo(Y_\lambda,\ft;M_K) = \Too\oplus\Too[1]$.
\end{proposition}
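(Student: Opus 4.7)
The plan is to feed the fully twisted $\HF^\infty$ of $Y_\lambda$ into the universal coefficient spectral sequence \eqref{eq:UCSS basic} with coefficient module $M_K$ and read off the answer directly.

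First, I would apply \cref{T:twisted HF infinity} to a torsion \spinc structure $\ft$ on $Y_\lambda$ to identify $\HFtoo(Y_\lambda,\ft) \cong \Too$ as an $R_\lambda[U]$--module, with the $R_\lambda$--action on $\Too$ being trivial. This is the essential structural input that makes the subsequent $\Tor$ calculation tractable, since it means the $R_\lambda$--module structure on the fully twisted theory is as simple as possible.

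Second, I would take the two-term free $R_\lambda$--resolution of $M_K$ displayed immediately before the proposition and tensor it over $R_\lambda$ with $\Too$. Because $T$ acts as the identity on $\Too$, multiplication by $(1-T)$ becomes the zero map after tensoring, so the resulting complex $\Too \to \Too$ has both homology groups equal to $\Too$; this gives $\Tor^{R_\lambda}_0(M_K,\Too) = \Too$, $\Tor^{R_\lambda}_1(M_K,\Too) = \Too$, and vanishing higher Tors. Tracking the homological shift through the bigrading accounts for the $[1]$ shift on the second summand.

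Third, plugging this into the UCSS yields a second page concentrated in two columns (namely $p = 0$ and $p = 1$). All higher differentials $d_r$ with $r \geq 2$ land in vanishing columns, so the sequence collapses at $E^2$. The remaining point — which I expect to be the only mildly subtle step — is the extension problem: one must check that the filtration of $\HFoo(Y_\lambda; M_K)$ built from the two $E^\infty$ pieces is a genuine direct sum rather than a nontrivial extension. This is essentially automatic here because on $\HFoo$ the variable $U$ acts invertibly, so the extension in question is one of $\bF[U,U^{-1}]$--modules; over this ring $\Too$ is free of rank one and hence projective, so every such extension splits, yielding $\HFoo(Y_\lambda(K); M_K) = \Too \oplus \Too[1]$.
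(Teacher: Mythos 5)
Your argument is correct and is essentially the paper's own: the paper derives the proposition from the two-term free resolution of $M_K$ displayed just before the statement, together with \cref{T:twisted HF infinity} and the collapse of the universal coefficient spectral sequence at $E^2$. The only addition is your explicit resolution of the extension problem via projectivity of $\Too$ over $\bF[U,U^{-1}]$, a detail the paper leaves implicit but which you handle correctly.
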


Of course, analogous considerations hold when $\lambda+\mu$ is the 0--framing and when $K$ is homologically essential in $Y$ (which is equivalent to~$\mu=\lambda_0$).

\section{Twisted correction terms}
	\label{ch:twisted correction terms}
Let~$\Yt$ be a closed $3$--manifold, equipped with a \spinc structure~$\ft$ such that~$c_1(\ft)$ is torsion. 
We will refer to~$\Yt$ as a \emph{torsion} \spinc 3\emph{--manifold}.
In this section, we work over a ground field~$\bF$.
Recall that, when~$Y$ is a rational homology sphere, the untwisted group~$\HFp\Yt$ admits a $U$--equivariant splitting of the form
	\begin{equation*}
	\HFp\Yt\cong \Tp \oplus \HFp_{\mathrm{red}}\Yt.
	\end{equation*}
The \emph{correction term}~$d\Yt$ is the degree of the element in~$\HFp\Yt$ corresponding to~$U\inv\in\Tp$.
More generally, when~$b_1(Y)>0$ there is an action of the exterior algebra~$\Lambda = \Lambda^* (H_1(Y)/{\rm Tor})$ on $\HFo\Yt$; Ozsv\'ath and Szab\'o used this action to define a similar invariant for a restricted class of $3$--manifolds, namely the ones with \emph{standard}~$\HFoo$.
Recall that $\Yt$ is said to have \emph{standard}~$\HFoo$ if~$\HFoo\Yt$ is isomorphic to~$\Lambda\otimes_\Z \Z[U,U\inv]$ as a~$\Lambda$--module.
Under this assumption, the kernel of the~$\Lambda$--action on~$\HFoo$ maps to a copy of~$\Tp$ in~$\HFp\Yt$ whose least degree is called the \emph{bottom-most correction term}~$d_b\Yt$.
It is clear that $d_b\Yt$ generalizes $d\Yt$ for rational homology spheres.
We propose another generalisation that is available for \emph{all} $3$--manifolds.

\begin{definition}[Twisted correction terms]\label{D:twisted correction terms}
Let~$\Yt$ be a torsion \spinc $3$--manifold and let $\bF$ be a field of characteristic~$p$.
We define the \emph{(homological) twisted correction term} $\dtw_p\Yt\in\Q$ as the minimal grading among all non-zero elements in the image of~$\pi_*\colon\HFtoo_*(Y,\ft)_\bF \ra\HFtp_*(Y,\ft)_\bF$.
Similarly, there is a \emph{cohomological} version $\od_p\Yt\in\Q$ defined using the map $\iota^*\colon\tHF_\infty^*(Y,\ft)_\bF\ra\tHF_-^*(Y,\ft)_\bF$ on Heegaard Floer cohomology; since multiplication by $U$ increases the degree by 2 in cohomology, $\od_p\Yt$ is the maximal grading among all non-zero elements in the image of $\iota^*$.
\end{definition}

\begin{remark}
Using the universal coefficient theorem, it is easy to show that if $\bF'$ is a field extension of $\bF$, then the two corresponding correction terms coincide, hence the correction term only depends on the characteristic. In particular, this justifies the notational choice and shows that is suffices to consider $\bF = \Q$ or $\bF_p$.
\end{remark}
\begin{remark}
It is not known whether the twisted correction terms in fact depend on~$p$.
To the best of our knowledge, there are no examples for which $\ud_0\Yt$ and~$\ud_p\Yt$ are different for some~$p>0$ and our example computations in \cref{ch:applications} give the same results for all values of~$p$.
Note that a similar situation arises in Fr{\o}yshov's work on monopole Floer homology~\cite{Froyshov_monopolehomology_2010}*{p.~569}.
%
\end{remark}
\begin{proposition}\label{T:correction terms_p-dependence}
The correction term $\ud_0\Yt$ agrees with the minimal grading among all non-$\Z$--torsion elements in the image of~$\pi_*\colon\HFtoo_*\Yt_\Z\ra\HFtp_*\Yt_\Z$. 
Furthermore, we have $\ud_0\Yt\geq \ud_p\Yt$ for every prime $p$.
\end{proposition}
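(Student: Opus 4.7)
The plan is to derive both statements from the universal coefficient theorem together with the structural description of $\HFtoo$ from \cref{T:twisted HF infinity}. Since $\CFto_*\Yt_\Z$ is a complex of free $\Z$-modules, for any abelian group~$A$ the UCT yields the natural short exact sequence
\[
0 \to \HFto_*\Yt_\Z \otimes_\Z A \to \HFto_*(Y,\ft;A) \to \Tor_1^\Z(\HFto_{*-1}\Yt_\Z, A) \to 0,
\]
compatible with the long exact sequence~\eqref{eq:HF long exact sequence}.

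For the first statement, flatness of $\Q$ over $\Z$ makes the $\Tor$ term vanish, so $\HFto_*\Yt_\Q = \HFto_*\Yt_\Z\otimes_\Z\Q$ and the image of $\pi_*^\Q$ is naturally identified with $(\operatorname{im}\pi_*^\Z)\otimes_\Z\Q$. A class $y\in\operatorname{im}\pi_*^\Z$ becomes nonzero in this tensor product precisely when it is not $\Z$-torsion, so the minimal grading of a nonzero element of $\operatorname{im}\pi_*^\Q$ equals the minimal grading of a non-torsion element of $\operatorname{im}\pi_*^\Z$; by definition this is $\ud_0\Yt$.

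For the second statement, fix a prime $p$. Applying \cref{T:twisted HF infinity} over both $\F_p$ and $\Q$ forces $\HFtoo_*\Yt_\Z$ to be torsion-free of $\Z$-rank one in each even grading; after a suitable choice of orientation system one has $\HFtoo_*\Yt_\Z\cong\Z[U,U^{-1}]$ as graded $\Z[U]$-modules, and the natural map $\HFtoo_\Z\otimes\F_p \to \HFtoo_{\F_p}$ is an isomorphism. For $\HFtp$ the UCT provides only a natural injection $\HFtp_\Z\otimes\F_p \hookrightarrow \HFtp_{\F_p}$; naturality of $\pi_*$ under the coefficient reduction $\Z\to\F_p$ then identifies $\operatorname{im}\pi_*^{\F_p}$ with the mod-$p$ reduction of $\operatorname{im}\pi_*^\Z$ inside this subspace. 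Nonzero elements of $\operatorname{im}\pi_*^{\F_p}$ at grading~$d$ therefore correspond to classes $y \in (\operatorname{im}\pi_*^\Z)_d$ with $y \notin p\,\HFtp_\Z$, and the bound $\ud_p\Yt \leq \ud_0\Yt$ is immediate whenever some non-torsion class at grading $\ud_0\Yt$ fails to be $p$-divisible.

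The main obstacle is the corner case in which every non-torsion class at grading $\ud_0\Yt$ lies in $p\,\HFtp_\Z$; here direct reduction yields zero at grading $\ud_0\Yt$ and one must produce a witness at a strictly smaller grading. The required argument combines the $U$-equivariance of $\pi_*$ with a Bockstein analysis of the obstruction $\delta(w)\in\HFtm_\Z$ for a lift $y = p w$, together with the observation that $\operatorname{im}\pi_*^{\F_p}$ is nontrivial (since $\HFtoo_{\F_p}\cong\F_p[U,U^{-1}]$ is bi-infinite while the image of $\iota_*^{\F_p}$ is bounded above in grading), in order to locate a $p$-primary torsion class in $\operatorname{im}\pi_*^\Z$ at some grading below $\ud_0\Yt$ that survives mod~$p$.
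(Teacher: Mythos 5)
Your treatment of the first statement, and the first half of your treatment of the second, are correct and follow the same route as the paper: flatness of $\Q$ gives $\HFto\Yt_\Q=\HFto\Yt_\Z\otimes\Q$, and for the prime $p$ the universal coefficient theorem together with the torsion-freeness of $\HFtoo\Yt_\Z$ (from \cref{T:twisted HF infinity}) identifies $\HFtoo\Yt_{\bF_p}$ with $\HFtoo\Yt_\Z\otimes\bF_p$ and embeds $\HFtp\Yt_\Z\otimes\bF_p$ into $\HFtp\Yt_{\bF_p}$, so that naturality reduces $\ud_p\Yt\leq\ud_0\Yt$ to the claim that $y=\pi_*^\Z(x)$ is not $p$--divisible in $\HFtp\Yt_\Z$, where $x$ generates $\HFtoo_{\Z,d}\cong\Z$ and $d=\ud_0\Yt$.

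The gap is your final paragraph: the case $y\in p\,\HFtp\Yt_\Z$ is exactly where the content of the second statement lies, and you describe ingredients of a possible argument without carrying any of it out. Moreover, the repair you sketch cannot work in the form stated. Since $p$--divisibility of $\pi_*^\Z(\xi)$ propagates to $\pi_*^\Z(U\xi)$, the set of gradings in which $\operatorname{im}\pi_*^\Z$ contains a class surviving reduction mod $p$ is upward closed, so the required witness must occur in grading exactly $d$, not ``at some grading below $\ud_0$''; and in grading $d$ one has $(\operatorname{im}\iota_*^\Z)_d=0$ (because $\operatorname{im}\iota_*^\Q$ vanishes there and $\HFtoo_{\Z,d}$ is torsion-free), whence $(\operatorname{im}\pi_*^\Z)_d=\Z y$ is cyclic and offers no other classes to reduce. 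Thus if $y=pz$ actually occurred, one would get $\ud_p\Yt>\ud_0\Yt$, falsifying the statement: a correct proof must \emph{exclude} this possibility, not work around it. Excluding it is not formal — the long exact sequence exhibits $\HFtp_{\Z,d}$ as an extension of $\ker(\iota_*)_{d-1}$ by $\Z y$, and a Bockstein computation shows the obstruction is a $p$--torsion class $\delta(z)\in\HFtm_{\Z,d-1}$ that exactness and torsion-freeness of $\HFtoo_\Z$ do not rule out. (The paper's own proof is a one-sentence appeal to the UCT and does not visibly address this case either; you have correctly isolated the delicate point, but your write-up does not close it.)
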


\begin{proof}
The universal coefficient theorem shows that $\tHFo\Yt_\Q = \tHFo\Yt_\Z\otimes_\Z\Q$, and the first statement readily follows. The second statement follows from the universal coefficient theorem applied to the change of coefficients from $\Z$ to $\bF_p$, together with the observation that $\tHFoo\Yt_\Z$ has no $\Z$--torsion.
\end{proof}

In what follows, we will be sloppy and simply write~$\ud$ instead of~$\ud_p$ to signify that the results and computations will hold regardless of the characteristic.

If~$Y$ is a rational homology $3$--sphere, then \cref{T:correction terms_p-dependence} shows that $\ud\Yt$ agrees with the usual correction term~$d\Yt$ as defined in~\cite{OzsvathSzabo_4mfs_gradings_2003}*{Definition~4.1}.
As in that case, there is an alternative description.
The long exact sequence~\eqref{eq:HF long exact sequence} together with \cref{T:twisted HF infinity} gives rise to a (non-canonical) decomposition of $R_Y[U]$--modules
	\begin{equation*}\label{eq:plus tower splitting}
	\tHFp\Yt\cong \Tp \oplus \tHFp_{\mathrm{red}}\Yt,
	\end{equation*}
where~$\tHFp_{\mathrm{red}}\Yt$ is defined as the cokernel of~$\pi_*$.
In such a decomposition~$\dtw\Yt$ appears as the minimal grading of non-zero elements in~$\Tp$.
In favorable cases, one can compute $\tHFp\Yt$ as a graded group and read off~$\ud\Yt$ directly.
\begin{example}\label{eg:T3 and S1xS2}
By a direct computation of $\tHFp$ one can check that 
	\begin{equation*}
	\dtw(S^1\times S^2,\ft_0)=-\tfrac{1}{2}
	\eqand
	\dtw(T^3,\ft_0)=\tfrac{1}{2}.
	\end{equation*}
The computation for~$S^1\times S^2$ is easy, for~$T^3$ see~\cite{OzsvathSzabo_4mfs_gradings_2003}*{Proposition~8.5}.
Moreover, since $S^1\times S^2$ and~$T^3$ have orientation-reversing diffeomorphisms that preserve~$\ft_0$ (up to conjugation), $\od$~agrees with~$-\ud$ as we will see in \cref{T:orientation reversal and conjugation} below.
\end{example}
It turns out that many properties of the usual correction terms have analogues for~$\ud\Yt$.
In the rest of this section we describe the effects on $\ud\Yt$ of 
	conjugation of \spinc structures,
	orientation reversal, 
	and connected sums.
In \cref{ch:negative semidefinite cobordisms} we will study the behavior under negative semidefinite cobordisms.

\subsection{Conjugation and orientation reversal}
Recall that in Heegaard Floer theory one identifies \spinc structures with homology classes of nowhere vanishing vector fields.
In particular, we have an on-the-nose equality $\Spinc(Y)=\Spinc(-Y)$ where~$-Y$ denotes~$Y$ with the opposite orientation.%
	\footnote{Note however that~$c_1(Y,\ft)=-c_1(-Y,\ft)=c_1(-Y,\bar{\ft})$. So some caution is needed when working with the more common shortened notation~$c_1(\ft)$.}
Moreover, if a \spinc structure~$\ft$ is represented by a vector field~$v$, then $-v$ represents the \emph{conjugate} \spinc structure which we denote by~$\bar{\ft}$.

\begin{proposition}\label{T:orientation reversal and conjugation}
The twisted correction terms 
	of~$\Yt$ 
satisfy
	\begin{equation*}
		\ud(Y,\bar{\ft})
	=	\ud(Y,\ft)
	=	-\od(-Y,\ft)
	=	-\od(-Y,\bar{\ft}).
	\end{equation*}
In particular, if $Y$ has an orientation-reversing self-diffeomorphism that preserves~$\ft$ up to conjugation, then~$\od\Yt=-\ud\Yt$.
\end{proposition}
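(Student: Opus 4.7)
The statement reduces to two symmetries of fully twisted Heegaard Floer homology: \emph{conjugation invariance} and an \emph{orientation-reversal duality}. My strategy is to establish each at the chain level, read off the consequences for $\pi_*$ and $\iota^*$, and then treat the self-diffeomorphism case separately.

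\textbf{Step 1 (Conjugation invariance).} At the level of Heegaard diagrams, the symmetry that swaps the roles of the $\bsa$-- and $\bsb$--curves yields an isomorphism of $R_Y[U]$--modules
\begin{equation*}
\CFto(Y,\ft) \cong \CFto(Y,\bar{\ft})
\end{equation*}
compatible with the exact sequence \eqref{eq:CF complexes} and with the absolute $\Q$--grading (both $c_1(\ft)$ and $c_1(\bar{\ft})$ are torsion at once). In particular, $\pi_*$ is intertwined on the two sides and $\ud$ is unchanged; the cohomological analogue delivers the corresponding statement for $\od$. This gives the first and third equalities in the chain.

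\textbf{Step 2 (Orientation-reversal duality).} Reversing the orientation of $Y$ identifies $\CFto(-Y,\ft)$ with a suitable $R_Y[U]$--dual of $\CFto(Y,\ft)$: this interchanges $\CFtp$ and $\CFtm$ (while preserving $\CFtoo$) and negates gradings. Passing to homology one gets identifications under which the map $\pi_*$ for $(-Y,\ft)$ corresponds to $\iota^*$ for $(Y,\ft)$, with gradings negated. Reading off the appropriate extremal grading in the image of each map then yields $\ud(Y,\ft) = -\od(-Y,\ft)$, the middle equality.

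\textbf{Step 3 (Self-diffeomorphism and main obstacle).} An orientation-reversing $\phi\colon Y\to Y$ with $\phi^*\ft\in\{\ft,\bar{\ft}\}$ induces, by naturality, an isomorphism $(Y,\ft)\cong(-Y,\phi^*\ft)$ of \spinc 3--manifolds; combining with the chain of equalities already established (and invoking Step 1 to absorb a possible $\ft\leftrightarrow\bar{\ft}$) gives $\ud\Yt=-\od\Yt$. The main subtlety throughout Steps 1 and 2 is compatibility with the distinguished orientation systems underlying the fully twisted theory: \cref{T:twisted HF infinity} requires a preferred class of orientation systems in order to identify $\HFtoo\Yt$ with $\Too$, and one has to check that the diagram symmetry and the duality send the preferred class on $(Y,\ft)$ to the preferred class on $(Y,\bar{\ft})$ and $(-Y,\ft)$, respectively. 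Given the canonical identification $R_Y = R_{-Y}$ coming from $H_2(Y) = H_2(-Y)$ and the triviality of the $R_Y$--action on $\Too$, this bookkeeping is the only non-routine part of the proof.
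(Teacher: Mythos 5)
Your plan is correct and follows essentially the same route as the paper, which simply defers to the untwisted argument of Ozsv\'ath--Szab\'o (\cite{OzsvathSzabo_4mfs_gradings_2003}*{Proposition~4.2}) combined with the conjugation and duality results for twisted coefficients in \cite{JabukaMark_product_formulae_2008}*{Section~6}; your Steps 1--3 are exactly that argument written out, including the correct caveat about orientation systems. One minor imprecision: the $\bsa$--$\bsb$ swap identifies $\CFto(Y,\ft)$ with $\CFto(Y,\bar{\ft})$ only up to the \emph{conjugate} $R_Y$--module structure (cf.\ \cref{R:cobordism coefficient modules}), but since $\ud$ and $\od$ are extracted from gradings alone this does not affect the conclusion.
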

\begin{proof}
This follows exactly as in the proof of~\cite{OzsvathSzabo_4mfs_gradings_2003}*{Proposition~4.2} with some additional input for twisted coefficients from~\cite{JabukaMark_product_formulae_2008}*{Section~6}.
\end{proof}

It is interesting to note that the proof of \cite{OzsvathSzabo_4mfs_gradings_2003}*{Proposition~4.2} also shows that for~$b_1(Y)=0$ we have~$\ud\Yt=\od\Yt$ -- both agreeing with~$d\Yt$ which therefore satisfies~$d(-Y,\ft)=-d\Yt$.
However, according to \cref{eg:T3 and S1xS2} this argument has to fail for~$b_1(Y)>0$.
In general, there is no obvious relation between~$\ud\Yt$ and~$\od\Yt$.

\subsection{Connected sums}
	\label{ch:connected sum formula}
Next we study the behavior of the twisted correction terms under the connected sum operation.
\begin{proposition}\label{p:additive}
For torsion \spinc $3$--manifolds $(Y_1,\ft_1)$ and $(Y_2,\ft_2)$ we have
	\begin{equation*}
	\ud(Y_1\#Y_2,\ft_1\#\ft_2) = \ud(Y_1,\ft_1) + \ud(Y_2,\ft_2)
	\end{equation*}
and
	\begin{equation*}
	\od(Y_1\#Y_2,\ft_1\#\ft_2) = \od(Y_1,\ft_1) + \od(Y_2,\ft_2)
	\end{equation*}
\end{proposition}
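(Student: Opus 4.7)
The plan is to adapt the Ozsv\'ath--Szab\'o proof of additivity of the correction term $d$ for rational homology 3--spheres (\cite{OzsvathSzabo_4mfs_gradings_2003}*{Theorem 4.3}) to the fully twisted setting. The essential input is a K\"unneth-type formula for the connected sum in twisted Heegaard Floer homology.

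First, I would note that $H_2(Y\#Y')\cong H_2(Y)\oplus H_2(Y')$, so that $R_{Y\#Y'}\cong R_Y\otimes_\bF R_{Y'}$, and \spinc structures on $Y\#Y'$ correspond to pairs of \spinc structures on the summands in a way compatible with the Chern class. One then obtains chain-level quasi-isomorphisms of the form
\begin{equation*}
\CFtoo(Y\#Y',\ft\#\ft')\simeq \CFtoo\Yt\otimes_{\bF[U,U\inv]}\CFtoo(Y',\ft')[\sigma]
\end{equation*}
(and analogous statements for the minus and plus flavors) compatible with the maps $\iota_*$ and $\pi_*$ in the long exact sequence, where $\sigma$ is a grading shift arising from matching the absolute $\Q$--gradings on the two sides. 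This is established essentially verbatim from the untwisted version in \cite{OzsvathSzabo_4mfs_gradings_2003} by keeping track of the $(R_Y\otimes R_{Y'})$--action on the chain complex associated to a pointed Heegaard diagram for the connected sum.

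Next, \cref{T:twisted HF infinity} gives $\HFtoo\Yt\cong\HFtoo(Y',\ft')\cong\Too$ with trivial module structure, so the K\"unneth formula yields $\HFtoo(Y\#Y',\ft\#\ft')\cong\Too$ as well, with the standard generator in grading $d$ corresponding to a product of generators whose gradings sum to $d-\sigma$. Consequently, under the chain-level splitting, the image of $\pi_*$ in $\HFtp(Y\#Y',\ft\#\ft')$ is identified with the tensor product of the corresponding images on each factor.

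Finally, invoking the characterisation of $\ud$ as the minimal grading of a non-zero element in the image of $\pi_*$, the minimal grading of the tensor-product image is the sum of the minimal gradings on each factor, which yields the claimed additivity for $\ud$; the formula for $\od$ follows from the same argument applied to $\iota^*$ on Heegaard Floer cohomology. The main technical obstacle is to verify the K\"unneth formula carefully in the fully twisted setting and to pin down the shift $\sigma$ so that it vanishes in the final expression; in particular, one must confirm that the orientation system under which \cref{T:twisted HF infinity} holds for the connected sum corresponds to the tensor product of the distinguished orientation systems on the two summands.
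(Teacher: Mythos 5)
Your opening moves are the same as the paper's: reduce to a K\"unneth statement for the connected sum, use \cref{T:twisted HF infinity} to identify all the $\infty$--flavours with $\Too$, and read off $\ud$ from the image of the tower. The gap is in the sentence ``the image of $\pi_*$ in $\HFtp(Y\#Y',\ft\#\ft')$ is identified with the tensor product of the corresponding images on each factor.'' That identification is exactly what needs proof, and it does not follow from a chain-level K\"unneth quasi-isomorphism. On homology the K\"unneth map $F^-_\#\colon \bigl(\tHFm\Yt\otimes_{\bF[U]}\tHFm(Y',\ft')\bigr)[2]\to\tHFm(Y\#Y',\ft\#\ft')$ is injective but in general \emph{not} surjective: its cokernel is a $\Tor^{\bF[U]}$ term coming from the $U$--torsion of the two factors. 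Consequently the commutative square only shows that $\mathrm{im}\,\iota_{Y\#Y'}$ \emph{contains} $F^\infty_\#\bigl(\mathrm{im}\,\iota_Y\otimes\mathrm{im}\,\iota_{Y'}\bigr)$; it could a priori be strictly larger (an element $x$ representing a nonzero Tor class satisfies $U^nx\in\mathrm{im}\,F^-_\#$, which bounds $U^n\iota(x)$ but not $\iota(x)$ itself). Since $\ud$ is, equivalently, $2$ plus the top degree in which $\iota_*$ is nonzero, this yields only the inequality $\ud(Y\#Y',\ft\#\ft')\ge\ud\Yt+\ud(Y',\ft')$, not the equality.

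The paper closes this gap not by sharpening the K\"unneth analysis but by a duality trick: one proves the analogous inequality $\od(Y\#Y',\ft\#\ft')\ge\od\Yt+\od(Y',\ft')$ using the cohomological K\"unneth theorem, then runs both inequalities for $-Y$ and $-Y'$ and invokes \cref{T:orientation reversal and conjugation} (namely $\ud(Y,\ft)=-\od(-Y,\ft)$) to convert them into the reverse inequalities for $Y\#Y'$. Your proposal should be amended to (a) claim only one inequality from the K\"unneth square, working with the minus flavour (which is free over $\bF[U]$, so the K\"unneth theorem applies cleanly, unlike the plus flavour you invoke), and (b) supply the reverse inequality via orientation reversal. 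Your concerns about the grading shift and the orientation system are legitimate but secondary; the shift is the standard $[2]$ normalised by $\ud(S^3,\ft_0)=0$ and cancels in the final statement.
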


\begin{proof}
The idea is to show that, in the connected sum theorem for~$\tHFm$, the tensor product of the two towers is mapped surjectively onto the tower in the connected sum, and this immediately proves the statement.

To see this, observe that from~\eqref{e:twistedKunneth}, the K\"unneth theorem yields a short exact sequence that splits:
	\begin{multline*}
	0\lra \big(\tHFm(Y_1,\ft_1)\otimes_{\bF[U]}\tHFm(Y_2,\ft_2)\big)[2] 
	\lrao{j} 
	\tHFm(Y_1\#Y_2,\ft_1\#\ft_2) \\
	\lra \Tor^{\bF[U]}\big(\tHFm(Y_1,\ft_1), \tHFm(Y_2,\ft_2)\big)
	\lra 0.
	\end{multline*}

Also, there is a splitting of $\bF[U]$--modules $\tHFm(Y_i,\ft_i)\cong \bF[U]x_i\oplus \tHFmred(Y_i,\ft_i)$, where each element in $\tHFmred(Y_i,\ft_i)$ is $U$--torsion; this splitting is far from being unique, but, if we insist upon $x_i$ being homogeneous, the degree of $x_i$ is well-defined, and indeed $\deg x_i = \ud(Y_i,\ft_i)-2$.

Let $x = j(x_1\otimes x_2)$, which is a homogeneous element of degree $\ud(Y_1,\ft_1)+\ud(Y_2,\ft_2)-2$.
Since the short exact sequence above splits, and since in the tensor product the only non-$U$--torsion summand is $\bF[U]x$, we deduce that there is a decomposition of $\bF[U]$--modules
\[
\tHFm(Y_1\#Y_2,\ft_1\#\ft_2) \cong \bF[U]x \oplus T,
\]
where $T$ is the $U$--torsion summand. Since the decomposition above determines the degree of $x$, we obtain the desired equality.
\end{proof}

\subsection{Manifolds with standard \texorpdfstring{$\HFoo$}{HFoo}}
	\label{ch:standard HFoo}
We now compare the twisted correction terms with the bottom most correction terms that have been studied by Ozsv\'ath and Szab\'o~\cite{OzsvathSzabo_4mfs_gradings_2003} and later by Levine and Ruberman~\cite{LevineRuberman_correctionterms_arxiv}.
\begin{proposition}\label{p:standardHFoo}
For torsion \spinc $3$--manifolds $\Yt$ with standard $\HFoo$ we have
\[\ud(Y,\ft) \le d_b(Y,\ft).\]
\end{proposition}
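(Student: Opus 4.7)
The plan is to compare the two correction terms using the universal coefficient spectral sequence~\eqref{eq:UCSS basic} with $M = \bF$ (trivial $R_Y$-action), together with its naturality with respect to the map~$\pi_*$.

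First, by \cref{T:twisted HF infinity} we have $\HFtoo\Yt \cong \Too$ with trivial $R_Y$-action. Assuming $H_1(Y)$ is torsion-free (the general case reduces to this by a routine extension-of-scalars argument), $R_Y$ is a Laurent polynomial ring in $n = b_1(Y)$ variables, and $\bF$ admits a Koszul resolution over $R_Y$. A direct calculation then gives
\[
\Tor^{R_Y}_*(\Too, \bF) \cong \Too \otimes_\bF \Lambda^*_\bF(H_1(Y)/Tor)
\]
as graded $\Lambda$-modules. The standard $\HFoo$ hypothesis is equivalent to the collapse of this spectral sequence at $E^2$, and it canonically identifies $\HFoo\Yt$ with $\Too \otimes \Lambda^*$ compatibly with the $\Lambda$-action. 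Under this identification, the kernel of the $\Lambda$-action on $\HFoo$ is a distinguished $\Too$-summand sitting at one of the edges of the UCSS, namely at $\Tor^{R_Y}_0$ or $\Tor^{R_Y}_n$ (depending on whether the $\Lambda$-action is interpreted as contraction or wedge).

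Next, I would invoke the naturality of the UCSS with respect to~$\pi_*$. The map $\pi_*\colon \HFoo \to \HFp$ induces a map of spectral sequences whose $E^2$-page map is $\Tor^{R_Y}_*(\pi_*, \bF)$. The image in $\HFp$ of the distinguished $\Too$-summand (the kernel of the $\Lambda$-action) corresponds, via the relevant edge of the UCSS for $\HFp$, to the image of the augmentation-applied map $\pi_* \otimes_{R_Y} \bF\colon \Too \to \HFtp \otimes_{R_Y} \bF$. Since $R_Y$ acts trivially on $\Too$ and hence on its image $\pi_*(\Too) \subseteq \HFtp$, which by definition is a $\Tp$-tower with bottom grading $\ud\Yt$, the augmentation leaves this $\Tp$ essentially unchanged. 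Consequently, the $\Tp$-summand of $\HFp$ arising from the kernel of the $\Lambda$-action has bottom grading $\ud\Yt$, which is exactly $d_b\Yt$ by definition.

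The main obstacle is the grading bookkeeping: one must verify that all of the UCSS identifications --- of the kernel of the $\Lambda$-action with an edge Tor summand, and then of this edge with its realization in $\HFp$ --- are grading-preserving, so that no spurious degree shift is introduced. This amounts to carefully tracking the homological degrees of the generators of $H_1(Y)/Tor$ in both the $\Lambda$-action on $\HFoo$ and the Koszul differentials, but is otherwise routine. Once the gradings are aligned (as they must be, given that both definitions pick out the minimum degree of the distinguished $\Tp$-summand emanating from the copy of $\Too$ in $\HFtoo$), the equality $\ud = d_b$ follows.
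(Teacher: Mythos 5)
Your overall strategy---the universal coefficient spectral sequence for the change of coefficients $R_Y\to\bF$, the Koszul/exterior-algebra identification of the $E^2$--page, and naturality with respect to $\pi_*$---is the same as the paper's, and it does correctly deliver \emph{one} of the two inequalities, namely $d_b\Yt\geq\ud\Yt$: a degree--$d_b$ element of the image of $\ker\Lambda$ in $\HFp\Yt$ lifts through the commutative square to a degree--$d_b$ element of $\tHFoo\Yt$ with non-zero image in $\tHFp\Yt$.

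The gap is in the sentence ``the augmentation leaves this $\Tp$ essentially unchanged,'' which is exactly the content of the reverse inequality $\ud\Yt\geq d_b\Yt$ and is not justified by your argument. The functor $-\otimes_{R_Y}\bF$ is only right exact, so the induced map from $\operatorname{im}(\pi_*)\otimes_{R_Y}\bF$ into $\tHFp\Yt\otimes_{R_Y}\bF$ may have a kernel; moreover the edge homomorphism $\tHFp\Yt\otimes_{R_Y}\bF=E^2_{0,*}\to\HFp\Yt$ also has a kernel coming from the higher differentials of the UCSS for $\HFp$, which (unlike the one for $\HFoo$) is \emph{not} forced to collapse by the standard--$\HFoo$ hypothesis. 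Either of these can truncate the bottom of the tower, so a priori the image in $\HFp\Yt$ of the distinguished $\Too$ could start in degree strictly greater than $\ud\Yt$; your argument therefore does not rule out $d_b\Yt>\ud\Yt$. The paper closes this gap with a separate, dual argument: it runs the analogous map of spectral sequences for $\tHF_\infty\to\HF_\infty$ and $\tHF_-\to\HF_-$ in cohomology, obtaining $-\ud\Yt\leq d_t(-Y,\ft)=-d_b\Yt$, which is the missing inequality. You would need to add such a dual (or some other) argument to complete the proof.
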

(It is understood that $\ud$ and $d_b$ are defined using the same coefficient field, and that the statement holds for all characteristics.)
\begin{proof}
Let $H$, $\Lambda$ and $R$ denote the group $H_2(Y)$, the exterior algebra $\Lambda^*H$ and the ring $\Z[H]$ respectively; endow $\Z$ with the trivial $R$--module structure, i.e. $\Z = R/(h-1\mid h\in H)$.
Let the graded $R$--module $\Lambda_R := R\otimes_\Z\Lambda$, endowed with the trivial differential, be the $R$--module resolution of $\Z$, and let $R$, seen as a complex supported in degree 0, be the trivial resolution for $R$ as an $R$--module. The quotient map $R\to\Z$ induces a map between the two resolutions, that is an isomorphism of their degree-0 summands.
This map, in turn, induces a map of (universal coefficient) spectral sequences from $\tHFo\Yt$ to $\HFo\Yt$.

If $\Yt$ has standard $\HFoo$, the universal coefficient spectral sequence from $\tHFoo\Yt\otimes_R\Lambda_R$ to $\HFoo\Yt$ collapses at the second page, and moreover the action of $\Lambda$ on $\HFoo\Yt$ is induced by the action of $\Lambda$ on the first page of the spectral sequence. In particular, the bottom-most tower of $\HFoo\Yt$ corresponds to the degree-0 component of $\Lambda$, and it follows that $\tHFoo\Yt$ maps onto this tower under the map of spectral sequences described above.

Summing up, we have the following commutative diagram
\[
\xymatrix{
\tHFoo\Yt\ar[d]
\ar[r] & \HFoo\Yt\ar[d]
\\
\tHFp\Yt\ar[r] & \HFp\Yt
}
\]
where the top horizontal map is an isomorphism of $\tHFoo\Yt$ onto the kernel of the $\Lambda$--action on $\HFoo\Yt$. It follows that $d_b\Yt \ge \ud\Yt$.
\end{proof}

While in general we do not expect equality of $d_b$ and $\ud$ to hold, there are families of examples where the two quantities agree; for instance, all rational homology spheres, and $0$--surgeries along knots in the 3--sphere, as the following example shows.

\begin{example}
Let us consider a knot $K$ in $S^3$; it follows from~\cite[Section 4.2]{OzsvathSzabo_4mfs_gradings_2003} that $d_b(S^3_0(K)) = d_{-1/2}(S^3_0(K)) = d(S^3_{-1}(K))-1/2$.
Let us now look at the twisted surgery exact triangle of~\cite[Theorem 9.14]{OzsvathSzabo_HF_properties_2004} associated to the framings $\infty, -1$ and $0$ of $K$:
\[
\dots \to \HFp(S^3)[t,t\inv] \stackrel{F}{\to} \HFp(S^3_{-1}(K))[t,t\inv] \stackrel{G}{\to} \tHFp(S^3_0(K))\stackrel{H}{\to} \dots
\]
It is immediate to see that the map $F$ is multiplication by $(1-t)$, that the map $G$, restricted on the tower, is the modeled on the projection $\Z[t,t\inv]\to \Z[t,t\inv]/(1-t)\cong \Z$, and that the map $H$ vanishes on the tower; moreover, the map $H$ has degree $-1/2$ in the \spinc structure with trivial Chern class.
In particular,
\[
\ud(S^3_0(K)) \ge d(S^3_{-1}(K))-1/2 = d_b(S^3_0(K)).
\]
Combined with the proposition above, this shows that $\ud(S^3_0(K)) = d_b(S^3_0(K))$.
\end{example}

\section{Negative semidefinite cobordisms}
	\label{ch:negative semidefinite cobordisms}
In this section we prove the core technical result, \cref{t:main_inequality} below, which will imply \cref{T:intersection form bound}. 
We will work over the integers, but everything goes through for~$\Q$ and~$\bF_p$ with obvious modifications.
The proof is based on the strategy used in Ozsváth and Szabó's proof of Donaldson's theorem~\cite{OzsvathSzabo_4mfs_gradings_2003}*{Section~9}.
Throughout this section $\Ws$ will be a \spinc cobordism between torsion \spinc $3$--manifolds $\Yt$ and~$(Y',\ft')$.
To obtain cleaner statements we introduce the shorthand notation 
	\begin{equation*}
	\delta\Yt = 4\ud\Yt+2b_1(Y).
	\end{equation*}
\begin{theorem}\label{t:main_inequality}
Let $\Ws$ be a negative semidefinite \spinc cobordism between torsion \spinc $3$--manifolds~$\Yt$ and~$(Y',\ft')$ such that the inclusion $Y\hookrightarrow W$ induces an injection $H_1(Y;\Q)\to H_1(W;\Q)$.
Then
\begin{align}\label{eq:main inequality}
	c_1^2(\fs) + b_2^-(W) 
		&\leq \delta(Y',\ft') - \delta\Yt \\
		&=  4\dtw(Y',\ft') - 4\dtw(Y,\ft) + 2b_1(Y') - 2b_1(Y).
\end{align}
\end{theorem}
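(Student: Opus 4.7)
The strategy is a twisted-coefficient adaptation of Ozsváth and Szabó's argument for Donaldson's theorem \cite{OzsvathSzabo_4mfs_gradings_2003}*{Section~9}. First I would set up the commutative ladder that compares the long exact sequences \eqref{eq:HF long exact sequence} for $\Yt$ and $(Y',\ft')$ via the spin$^c$ cobordism maps
\[
\Ft^\circ_{W,\fs} \colon \HFto(Y,\ft) \to \HFto(Y',\ft';B_W),
\]
together with the standard grading-shift formula
\[
\Delta(W,\fs) = \tfrac{1}{4}\bigl(c_1^2(\fs) - 2\chi(W) - 3\sigma(W)\bigr).
\]
Since both boundary components are torsion, \cref{T:twisted HF infinity} identifies $\HFtoo(Y,\ft)$ with a single copy of $\Too$, so the absolute $\Q$-grading of the unique $\Too$-summand in $\HFtp(Y,\ft)$ equals $\ud\Yt$ by \cref{D:twisted correction terms}.

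The technical heart of the argument is to show that under negative semidefiniteness of $W$ and injectivity of $H_1(Y;\Q) \to H_1(W;\Q)$, the map $\Ft^\infty_{W,\fs}$ sends the generator of $\Too$ on the source side to a non-torsion element in a distinguished tower on the target side whose image under $\pi_*$ detects $\ud(Y',\ft')$, modulo a shift of $\tfrac{1}{2}\bigl(b_1(Y')-b_1(Y)\bigr)$ coming from the discrepancy between $B_W$ and $R_{Y'}$. Computationally, this amounts to running the universal coefficient spectral sequence \eqref{eq:UCSS basic} with $\HFtoo(Y',\ft')\cong\Too$ carrying the trivial $R_{Y'}$-action, and showing that the $H_1$-injectivity hypothesis forces enough collapse to identify the tower hit by $\Ft^\infty_{W,\fs}$.

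Once this non-degeneracy is in place, a diagram chase through the ladder using the minimal-grading characterisations of $\ud$ employed in the proof of \cref{p:additive} yields
\[
\ud(Y,\ft) + \Delta(W,\fs) \leq \ud(Y',\ft') + \tfrac{1}{2}\bigl(b_1(Y') - b_1(Y)\bigr).
\]
Substituting $\sigma(W) = -b_2^-(W)$ and expressing $2\chi(W) + 3\sigma(W)$ via the long exact sequence of the pair $(W,\partial W)$ rearranges this into the inequality of the theorem.

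The principal obstacle is the second step. In the rational homology sphere case, negative semidefiniteness alone forces $\Ft^\infty_{W,\fs}$ to be a non-zero multiple of the identity on $\bF[U,U\inv]$ and no module bookkeeping is needed. In the present twisted setting one has to compare the $R_Y$-module $R_Y$ with the $R_{Y'}$-module $B_W$, and non-trivial first Betti numbers complicate both the identification of $\HFtoo$ and the tracking of the image. I would expect the proof either to extend Ozsváth and Szabó's algebraic analysis of $\HFoo$ directly to the $B_W$-twisted setting, or to reduce to cobordisms built purely from 2-handles, where the twisted surgery exact triangle of \cref{p:twisted_triangle} combined with the grading-shift formula should allow the comparison to be pushed through inductively.
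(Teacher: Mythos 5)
Your overall strategy --- adapt Ozsv\'ath--Szab\'o's Section~9 argument, decompose $W$ into handles, and control the behaviour of the twisted towers under the cobordism maps on $\tHFoo$ --- is the same as the paper's, and your bookkeeping of the grading shift and of the $b_1$ terms is consistent with how the paper assembles the final inequality. But the proposal stops exactly where the real work begins, and the step you defer is not a routine extension of the untwisted argument.

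The genuine gap is the treatment of 2--handles attached along knots of infinite order in $H_1(Y)$. For such a handle the fully twisted surgery triangle cannot be run the way you suggest: the action of a non-torsion class $[K]\in H_1(Y)$ on the fully twisted group $\tHFoo\Yt$ is annihilating (see the remark following \cref{T:2-handles_essential}), so the Ozsv\'ath--Szab\'o argument --- which requires $[K]$ to act with non-trivial image on $\HFoo$ --- fails outright with coefficients in $R_Y$. The paper's fix is to introduce partially twisted coefficients $M_V=\Z[H_2(Y)/V]$ and the notion of $V$--standard $\tHFoo$ (\cref{T:2-handles_essential}), and then to exploit a standard ordering of the handles: after first surgering circles in the interior of $W$ to reduce to the case where $H_1(Y;\Q)\to H_1(W;\Q)$ is an isomorphism, one groups the 1--handles together with the $b_1$--decreasing 2--handles into a sub-cobordism $W_{12}$, observes via \cref{eg:standardness} that $Y\#\mathbb{S}_n$ has $V$--standard $\tHFoo$ for $V=H_2(\mathbb{S}_n)$, and shows that $W_{12}$ is a rational homology cobordism inducing an isomorphism on $\tHFoo$ with these coefficients; the remaining handles are covered by \cref{T:1 and 3 handles} and \cref{T:2-handles_torsion}. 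Your proposal neither identifies this obstruction nor supplies a mechanism for circumventing it; the hypothesis on $H_1(Y;\Q)\to H_1(W;\Q)$, which you invoke only vaguely as forcing ``enough collapse'' in the universal coefficient spectral sequence, is used precisely to make this pairing of 1--handles with homologically essential 2--handles possible. Until that step is carried out, the claim that $\Ft^\infty_{W,\fs}$ hits a non-torsion element of the appropriate tower is unsupported.
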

Before going into the proof we pause to derive some consequences of \cref{t:main_inequality}.
To begin with, we show that it implies \cref{T:intersection form bound}. 
\begin{proof}[Proof of \cref{T:intersection form bound}]
Given a negative semidefinite filling~$(Z,\fs)$ of~$(Y,\ft)$ we consider the \spinc cobordism from~$(S^3,\ft_0)$ to~$\Yt$ given by $W = Z\setminus B^4$ equipped with the restriction~$\fs$. 
Since $S^3$ is simply connected, \cref{t:main_inequality} applies and the desired inequality is immediate from the fact that $\delta(S^3,\ft_0) = 0$.
\end{proof}

Another consequence of \cref{t:main_inequality} is that the twisted correction terms, like ordinary and generalised correction terms, are rational cobordism invariants.
\begin{corollary}\label{T:homology cobordism invariance}
If $\Ws$ is a rational homology cobordism between $\Yt$ and $(Y',\ft')$, then $\ud\Yt = \ud(Y',\ft')$ and $\delta\Yt=\delta(Y',\ft')$.
\end{corollary}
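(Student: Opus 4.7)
The plan is to apply \cref{t:main_inequality} to $W$ in both orientations and pinch the two resulting inequalities. Precisely, I would view $(W,\fs)$ first as a \spinc cobordism from $\Yt$ to $(Y',\ft')$, and then view the reversed cobordism $(-W,\fs)$ as a \spinc cobordism from $(Y',\ft')$ to $\Yt$.

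The first step is to verify the hypotheses of \cref{t:main_inequality} in both directions. Because $W$ is a rational homology cobordism, both inclusions $Y\hookrightarrow W$ and $Y'\hookrightarrow W$ induce isomorphisms on rational homology, so the required injectivity on $H_1(-;\Q)$ holds regardless of which end of $W$ we treat as the ``incoming'' boundary. From the long exact sequence of the pair $(W,Y)$ with rational coefficients one also obtains $H_2(W;\Q)\cong H_2(Y;\Q)$ via restriction, and any class supported in the boundary pushes off itself through a collar, so has vanishing self-intersection. Thus the rational intersection form of $W$ is identically zero, giving $b_2^-(W)=b_2^-(-W)=0$ and $c_1^2(\fs)=0$, and in particular $W$ is vacuously negative semidefinite in either orientation.

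With the hypotheses in place, I would apply \cref{t:main_inequality} twice. The forward application produces
\[
0 \;=\; c_1^2(\fs)+b_2^-(W) \;\leq\; \delta(Y',\ft') - \delta\Yt,
\]
while the application to $(-W,\fs)$ produces the opposite inequality
\[
0 \;=\; c_1^2(\fs)+b_2^-(-W) \;\leq\; \delta\Yt - \delta(Y',\ft').
\]
Combining these yields $\delta\Yt=\delta(Y',\ft')$, and since a rational homology cobordism forces $b_1(Y)=b_1(Y')$, this equality of $\delta$'s immediately gives $\ud\Yt = \ud(Y',\ft')$.

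I do not anticipate a serious obstruction in executing this plan; the only point that warrants care is confirming that every hypothesis of \cref{t:main_inequality} survives the orientation reversal of $W$ (torsion Chern class, negative semidefiniteness, and the $H_1$-injectivity of the incoming boundary). All three follow automatically from the rational-homology-cobordism hypothesis, so once \cref{t:main_inequality} is granted, the argument reduces to a short bookkeeping exercise.
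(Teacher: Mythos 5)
Your proposal is correct and is essentially identical to the paper's own proof: both apply \cref{t:main_inequality} to $W$ and to $-W$, using that the rational homology cobordism hypothesis makes both boundary inclusions rational homology isomorphisms (hence the $H_1$--injectivity and semidefiniteness hypotheses hold in either orientation), and then pinch the two inequalities. Your additional verification that $b_2^\pm(W)=0$ and $c_1^2(\fs)=0$ is a correct elaboration of details the paper leaves implicit.
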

\begin{proof}
Both $W$ and $-W$ are negative semidefinite, and both inclusions $Y, Y' \hra W$ induce isomorphisms on rational homology by assumption. 
Hence applying~\cref{t:main_inequality} to $W$ and $-W$ we get $\ud\Yt\le \ud(Y',\ft')$ and $\ud(Y',\ft')\le\ud\Yt$.
\end{proof}

\subsection{The proof of \cref{t:main_inequality}}
	\label{ch:proof of main inequality}
As mentioned above our proof of \cref{t:main_inequality} is modeled on Ozsváth and Szabó's proof of Donaldson's theorem in \cite{OzsvathSzabo_4mfs_gradings_2003}*{Section~9}.
The strategy is to equip the cobordism with a suitable handle decomposition and to investigate the behavior of the twisted correction terms under 1--, 2--, and $3$--handle attachments.
As usual, the 1-- and $3$--handles can be treated on an essentially formal level while the 2--handles require more sophisticated arguments -- in this case establishing properties of cobordism maps on~$\tHFoo$ with suitably twisted coefficients.
We begin with the 1-- and $3$--handles.
\begin{proposition}\label{T:1 and 3 handles}
If~$W$ consists of a single 1-- or $3$--handle attachment, then
	\begin{equation*}
		\dtw(Y',\ft')-\dtw(Y,\ft) = -\tfrac{1}{2} \big (b_1(Y')-b_1(Y)\big) =
				\begin{cases}
				-\tfrac{1}{2} & \text{for 1--handles}\\
				\phantom{-}\tfrac{1}{2} & \text{for $3$--handles}
				\end{cases}
	\end{equation*}
or, equivalently, $\delta(Y',\ft')=\delta\Yt$.
\end{proposition}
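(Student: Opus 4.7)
The plan is to reduce both cases to the connected sum formula of \cref{p:additive}, combined with the explicit value $\dtw(S^1\times S^2,\ft_0)=-\tfrac{1}{2}$ recorded in \cref{eg:T3 and S1xS2}.

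For the 1--handle case, a standard handle calculus argument shows that attaching a 4--dimensional 1--handle to $Y\times[0,1]$ produces a cobordism whose upper boundary is $Y'\cong Y\#(S^1\times S^2)$, with $b_1(Y')=b_1(Y)+1$. The key step is to identify $\ft'$ as the connected sum \spinc structure $\ft\#\ft_0$. Since $W$ deformation retracts onto $Y$ with an arc attached (homotopy equivalent to $Y\vee S^1$), we have $H^2(W)\cong H^2(Y)$; dually, the belt sphere of the 1--handle bounds its cocore in $W$, so the restriction map $H^2(W)\to H^2(Y')$ lands in the $H^2(Y)$--summand of the splitting $H^2(Y')\cong H^2(Y)\oplus H^2(S^1\times S^2)$. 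Hence $c_1(\ft')$ has trivial $S^1\times S^2$--component, forcing the $S^1\times S^2$--factor of $\ft'$ to be the unique \spinc structure with vanishing first Chern class, namely $\ft_0$; the $Y$--factor is then forced to equal $\ft$ by naturality across the product portion of $W$. Applying \cref{p:additive} yields $\dtw(Y',\ft')=\dtw(Y,\ft)-\tfrac{1}{2}$.

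For the 3--handle case, I would reverse the time direction: the same $W$, viewed from $Y'$ up to $Y$, is a 1--handle cobordism. By the previous case, $Y\cong Y'\#(S^1\times S^2)$ with $\ft\cong\ft'\#\ft_0$, so \cref{p:additive} gives $\dtw(Y,\ft)=\dtw(Y',\ft')-\tfrac{1}{2}$, that is $\dtw(Y',\ft')-\dtw(Y,\ft)=+\tfrac{1}{2}$, while $b_1(Y')-b_1(Y)=-1$.

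In both cases $\delta(Y',\ft')-\delta(Y,\ft)=4\bigl(\dtw(Y',\ft')-\dtw(Y,\ft)\bigr)+2\bigl(b_1(Y')-b_1(Y)\bigr)=0$, establishing the second form of the statement. The only part that requires any real care is the cohomological identification of the new \spinc structure as a connected sum in the 1--handle case; this is routine bookkeeping, consistent with the principle that 1-- and 3--handles can be handled on an essentially formal level.
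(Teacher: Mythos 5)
Your proposal is correct and follows essentially the same route as the paper: the paper's proof likewise reduces the 1--handle case to \cref{p:additive} together with $\dtw(S^1\times S^2,\ft_0)=-\tfrac12$ from \cref{eg:T3 and S1xS2}, using the fact that there is a unique \spinc structure on $W$ extending $\ft$ with torsion restriction to $Y'\cong Y\#(S^1\times S^2)$ (your cohomological bookkeeping spells out exactly why that restriction is $\ft\#\ft_0$), and treats 3--handles by the symmetric identification $Y\cong Y'\#(S^1\times S^2)$.
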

\begin{proof}
In the case of a 1--handle attachment we have~$Y'\cong Y\# (S^1\times S^2)$ and the claim follows from 
	\cref{p:additive}, 
	the computation of~$\dtw(S^1\times S^2,\ft_0)=-\tfrac{1}{2}$ in \cref{eg:T3 and S1xS2}, 
	and the fact that there is a unique \spinc structure on $W$~extending~$\ft$ whose restriction to~$Y'$ is torsion.
Similarly, for $3$--handles we have~$Y\cong Y'\#(S^1\times S^2)$. 
\end{proof}
For the discussion of 2--handles we switch to a more fitting notation.\label{end of interlude}
We consider a framed knot~$(K,\lambda)$ in a $3$--manifold~$Y$ and write~$Y_\lambda=Y_\lambda(K)$ and $W_\lambda=W_\lambda(K)$ for the $3$--manifold obtained by $\lambda$--framed surgery on~$K$ and the corresponding 2--handle cobordism.
We have to discuss the cobordism maps induced by~$W_\lambda$ and it turns out that we have to distinguish two cases depending on whether $K$ has infinite order in~$H_1(Y)$ or it represents a torsion class.
We begin with the former case which requires some more subtle modifications of the standard arguments for untwisted coefficients.

We first introduce some terminology.
For any subgroup~$V\subset H_2(Y)$ we define
	\begin{equation*}
	V^\perp = \Set{x\in H_1(Y)}{\text{$x\cdot v=0$ for all $v\in V$}}\subset H_1(Y).
	\end{equation*}
Note that $V^\perp$ contains the torsion subgroup of~$H_1(Y)$ and that the intersection pairing induces a canonical identification of $H_1(Y)\big/ V^\perp$ with the dual group~$\Hom(V,\Z)$.

\begin{definition}\label{D:}
Let~$\Yt$ be a torsion \spinc $3$--manifold and let~$V$ be a direct summand of~$H_2(Y)$.
Consider the coefficient module $M_V=\Z[H_2(Y)/V]$ with the obvious $R_Y$--action.
We say that  $\Yt$ has \emph{$V$--standard~$\tHFoo$} if there is an $R_Y[U]$--linear isomorphism
	\begin{equation*}
	\tHFoo(Y,\ft;M_V) \cong \Lambda^*V \otimes_\Z \Z[U,U\inv]
	\end{equation*}
such that the action of $V^\perp\subset H_1(Y)$ is annihilating while $V^*=H_1(Y)\big/ V^\perp$ acts by contraction on~$\Lambda^*V$. 

\end{definition}
\begin{example}\label{eg:standardness}
(i) 
For $V=H_2(Y)$ the above definition agrees with the usual notion of ``standard~$\HFoo$'' discussed in \cref{ch:standard HFoo}.

(ii) 
By \cref{T:twisted HF infinity} all $3$--manifolds have standard~$\tHFoo$ for $V=0$ and according to \cref{p:twistedcomputation} the same holds for any~$V$ of rank~1.

(iii)
This example will be particularly relevant and has, in fact, already appeared in the proof of \cref{t:twistedKunneth}.
Let~$\Yt$ be a \spinc \mbox{$3$--manifold};
the proof of \cite{OzsvathSzabo_HF_properties_2004}*{Proposition~6.4} shows that for any \mbox{$R_Y$--module~$M$} we have 
	\begin{equation}\label{eq:connected sum with S1xS2}
	\tHFo(Y\#\mathbb{S}_n,\ft\#\ft_0;M) \cong \tHFo(Y,\ft;M)\otimes \Lambda^*H_2(\mathbb{S}_n) 
	\end{equation}
where~$M$ is considered as and module over $R_{Y\#\mathbb{S}_n}=R_Y\otimes_\Z R_{\mathbb{S}_n}$ with trivial $R_{\mathbb{S}_n}$~action.
Moreover, the action of~$H_1(Y\#\mathbb{S}_n)$ on the right-hand  side is induced by the usual action of~$H_1(Y)$ on the first factor, and by the contraction with elements of~$H_1(\mathbb{S}_n)$ via the intersection product on the second factor.
In particular, it is a matter of checking the definition to see that~$(Y\#\mathbb{S}_n,\ft\#\ft_0)$ has standard~$\tHFoo$ \wrt the subgroup of $H_2(Y\#\mathbb{S}_n)$ corresponding to $H_2(\mathbb{S}_n)$.
\end{example}
%
%
%
%
%
\begin{proposition}\label{T:2-handles_essential}\label{T:analogue of Proposition 9.3}
Let~$\Yt$ be a torsion \spinc $3$--manifold and let~$(K,\lambda)$ be a framed knot in~$Y$ such that $K$ has infinite order in~$H_1(Y)$.
Let~$V$ be a direct summand of~$H_2(Y)$ such that $\Yt$ has $V$--standard~$\tHFoo$ and some~$v\in V$ satisfies~$[K]\cdot v\neq 0$.
Then there is a subgroup $V_K$ of~$H_2(Y_\lambda)$ such that~$M_{V_K}\cong M_V$.
Moreover, $Y_\lambda$~has $V_K$--standard~$\tHFoo$ for any torsion \spinc structure~$\ft'$ which is cobordant to~$\ft$ via~$(W_\lambda,\fs)$; and the cobordism map induces an isomorphism
	\begin{equation*}
	\tHFoo(Y,\ft;M_V)\big/\ker[K]
	\lrao{\cong}
	\tHFoo(Y_\lambda,\ft';M_V)
	\end{equation*}
where~$\ker[K]$ is the kernel of the action of~$[K]$.
\end{proposition}
\begin{proof}
One readily checks that the inclusion of~$Y$ in~$W_\lambda$ induces an isomorphism $H_2(W_\lambda)\cong H_2(Y)$.
According to \cref{eq:cobordism module} we get maps
	\begin{equation*}
	F^\infty_{W_\lambda,\fs} \colon \tHFoo(Y,\ft;M_V) \ra \tHFoo(Y_\lambda,\ft';M_V)
	\end{equation*}
for any $\fs\in \Spinc(W_\lambda)$ and $\ft'=\fs|_{Y_\lambda}$.

Arguing as in the proof of \cref{T:surgery module} we see that
	$H_2(Y_\lambda)\cong H_2(Y_K)$ and
	$H_2(Y)\cong H_2(Y_K)\oplus \Z$
where the second summand is generated by a primitive element of~$H_2(Y)$ that has non-trivial intersection with~$[K]$.
By assumption we can find such an element in~$V$.
Under the above identifications we can consider~$V_K=\Set{v\in V}{[K]\cdot v=0}$ as a subgroup of~$H_2(Y_K)$ and therefore of~$H_2(Y_\lambda)$.
Moreover, we have $H_2(Y)/V \cong H_2(Y_\lambda)/V_K$ and thus $M_V\cong M_{V_K}$.
Now suppose that $\ft'$ is torsion.
We can put the maps induced by $W_\lambda$ into a surgery triangle as before and argue as in the proof of~\cite{OzsvathSzabo_4mfs_gradings_2003}*{Proposition~9.3} that~$F^\infty_{W_\lambda,\fs}$ vanishes on~$\ker[K]$ and is injective on the quotient for all field coefficients.
The only missing piece is a bound on the rank of~$\tHFoo(Y_\lambda,\ft';M_V)$ in each degree. 
To that end, we observe that the $E_2$--term of the relevant universal coefficient spectral sequence is given by 
	\begin{align*}
	\Tor^{R_Y}\big(\tHFoo(Y_\lambda,\ft'), M_V\big)
	&\cong \Tor^{\Z[H_2(Y_\lambda)]}\big(\Z, \Z[H_2(Y)/V] \big) \otimes_\Z \Z[U,U\inv] \\
	&\cong \Tor^{\Z[V]}(\Z,\Z) \otimes_\Z \Z[U,U\inv] \\
	&\cong \Lambda^* V \otimes_\Z \Z[U,U\inv]
	\end{align*}
where the second isomorphism follows from 
	Shapiro's lemma (see~\cite{Brown_cohomology_of_groups_1982}*{p.~73}, for example).
The resulting rank bound can be used as a replacement of \cite{OzsvathSzabo_4mfs_gradings_2003}*{Lemma~9.2} in the proof of \cite{OzsvathSzabo_4mfs_gradings_2003}*{Proposition~9.3}.
\end{proof}
\begin{remark}\label{R:}
Note that in the above proof it was crucial for the action of~$K$ to have non-trivial image. 
Since any non-torsion element of~$H_1(Y)$ annihilates~$\tHFo\Yt$ (see~\cite{JabukaMark_product_formulae_2008}*{Remark~5.2}), the proof does not work for~$V=0$, that is, we cannot start with fully twisted coefficients for~$Y$.
\end{remark}

We now turn to the case when $K$ has finite order in~$H_1(Y)$. 
\begin{proposition}\label{T:2-handles_torsion}
Let~$\Yt$ and $(K,\lambda)$ be as above and suppose that $K$ has finite order in~$H_1(K)$.
If $b_2^+(W_\lambda)=0$, then $W_\lambda$ induces an isomorphism
	\begin{equation*}
	\tHFoo(Y,\ft)
	\lrao{\cong}
	\tHFoo(Y_\lambda,\ft')
	\end{equation*}
where~$\ft'$ is the restriction of an extension of~$\ft$ to the surgery cobordism.
\end{proposition}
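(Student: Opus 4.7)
The plan is to apply the twisted surgery exact triangle of Proposition~\ref{p:twisted_triangle} to the framed knot $(K,\lambda)$ and then pass to the $\tHFoo$ level. First, because $K$ represents a torsion class in $H_1(Y)$, I would use the natural identifications from the proof of the preceding lemma to see that $M_K$ coincides with $R_{Y_\lambda}$ and with $R_{Y_{\lambda+\mu}}$ under the inclusion-induced maps, so that the triangle in fact involves only fully twisted groups. A short additional check is needed to guarantee that the surgery cobordisms carry \spinc extensions of $\ft$ whose restrictions to the other boundary components are again torsion.

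Theorem~\ref{T:twisted HF infinity} then identifies each of the three $\tHFoo$ groups with a single tower $\Too$. The resulting exact triangle reads
\[
\Too \xrightarrow{F^\infty_\lambda} \Too \xrightarrow{G^\infty} \Too \xrightarrow{H^\infty} \Too[1],
\]
and a rank count forces exactly one of the three maps to vanish, while the remaining two are isomorphisms up to a unit of $\bF[U,U^{-1}]$. The problem therefore reduces to checking that the vanishing map is not $F^\infty_\lambda$.

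For this I would invoke the hypothesis $b_2^+(W_\lambda)=0$: of the three $2$--handle cobordisms in the triangle, exactly one has $b_2^+ > 0$, and the assumption pins this positive one away from $W_\lambda$. An adjunction-type vanishing argument -- adapted from~\cite{OzsvathSzabo_HF_properties_2004} and the proof of Donaldson's theorem in \cite{OzsvathSzabo_4mfs_gradings_2003}*{Section~9} -- then shows that a $b_2^+>0$ cobordism induces the zero map on twisted $\tHFoo$ as well, so by exactness $F^\infty_\lambda$ is the required isomorphism.

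The main obstacle I expect is precisely this last step: verifying the twisted version of the $\tHFoo$ vanishing under $b_2^+>0$. One needs to check that the standard factoring argument of Ozsv\'ath and Szab\'o survives the presence of the bimodule $B_W$ in the coefficient ring, namely that the twisted cobordism map on $\tHFoo$ continues to kill the fundamental tower. Once this is in place, the remainder of the proof is formal bookkeeping.
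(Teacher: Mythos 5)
Your strategy---run the surgery triangle of \cref{p:twisted_triangle} at the $\infty$ level and conclude by rank counting---is in the right spirit but breaks down at two load-bearing steps, and it breaks precisely in the cases that make this proposition nontrivial. The identification of $M_K$ with the fully twisted coefficient ring at every vertex of the triangle is only valid for framings that do not change $b_1$. When $K$ is torsion in $H_1(Y)$ there is exactly one framing (the rational longitude) for which $b_1$ jumps, and if $\lambda$ or $\lambda+\mu$ is that framing, the corresponding vertex carries the module $M_K$, which is \emph{not} the fully twisted module for that manifold: by \cref{p:twistedcomputation} its $\HFoo$ is $\Too\oplus\Too[1]$, not a single tower. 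So the triangle is not ``$\Too\to\Too\to\Too$'' in general; and even when all three $b_1$'s agree, the triangle does not decompose along \spinc structures, so each vertex is a direct sum of towers over all torsion \spinc structures. In fact your reduction is internally inconsistent: an exact triangle of three nonzero single towers, with one map zero and the other two isomorphisms, violates exactness (if $G$ were zero, exactness would force $F$ and $H$ to be isomorphisms, and then $\operatorname{im}H=\ker F$ would force $F=0$). The extra tower supplied by \cref{p:twistedcomputation} (or the $[t,t^{-1}]$--coefficients in the Jabuka--Mark triangle) is exactly what makes the exact sequence possible, and any correct rank count has to account for it.

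The second gap is the mechanism for ruling out the vanishing of $F^\infty_\lambda$. The claim that exactly one of the three cobordisms has $b_2^+>0$ is not established, and the $b_2^+>0$ vanishing on $\tHFoo$ is not how the argument is run. The paper instead adapts \cite{OzsvathSzabo_4mfs_gradings_2003}*{Proposition~9.4}, splitting into three cases according to how $b_1$ changes among $Y$, $Y_\lambda$, $Y_{\lambda+\mu}$: when all agree, the untwisted argument carries over verbatim with fully twisted coefficients; when $b_1(Y_{\lambda+\mu})$ jumps, one uses the Jabuka--Mark triangle and the fact that the relevant map is multiplication by $(1-t)$ in large degrees, hence surjective on towers; when $b_1(Y_\lambda)$ jumps, one uses \cref{p:twisted_triangle} together with \cref{p:twistedcomputation} playing the role of \cite{OzsvathSzabo_4mfs_gradings_2003}*{Lemma~9.2} as a rank bound. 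To repair your proof you would need to supply exactly these inputs: the correct coefficient module at each vertex in each case, and a rank bound replacing the naive ``three towers'' picture.
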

\begin{remark}\label{R:rational linking}
For those familiar with rational linking numbers we note that the $b_2^+$--condition is equivalent to~$\mathrm{lk}_\Q(K,\lambda)\leq0$ so that the assumptions in the above propositions can be rephrased purely in $3$--dimensional terms.
\end{remark}
\begin{proof}[Proof of \cref{T:2-handles_torsion}]
The main idea is to study exact triangles relating suitable twisted Heegaard Floer homology groups of the manifolds $Y$, $Y_\lambda$, and~$Y_{\lambda+\mu}$ where the latter is obtained by $\lambda+\mu$--framed surgery on~$K$.
There are three cases to consider according to the change of~$b_1$ under the surgeries:
\begin{enumerate}[(1)]
	\item $b_1(Y) = b_1(Y_\lambda) = b_1(Y_{\lambda+\mu})$
	\item $b_1(Y) = b_1(Y_\lambda) < b_1(Y_{\lambda+\mu})$
	\item $b_1(Y) = b_1(Y_{\lambda+\mu}) < b_1(Y_\lambda)$
\end{enumerate}

Case (1) is an immediate adaptation of the proof of~\cite[Proposition 9.4]{OzsvathSzabo_4mfs_gradings_2003}. In fact, all relevant cobordisms induce maps between the fully twisted Floer homology groups, and the proof proceeds exactly as in the untwisted case.

Case (2) also follows from an adaptation of the same proof, but with more substantial modifications. In this case, in fact, there is a surgery exact triangle that reads as follows (see~\cite[Theorem 9.1]{JabukaMark_product_formulae_2008}):
\[
\xymatrix{
\tHFp(Y)[t,t\inv]\ar[rr]^F & & \tHFp(Y_\lambda)[t,t\inv]\ar[dl]^G\\
 & \tHFp(Y_{\lambda+\mu})\ar[ul]^H
}
\]
Here, $F$ is $t$--equivariant and is, in fact, the map $\underline{F}\otimes \mathbf{1}$, where $\underline{F}$ is the map induced by the surgery cobordism between the twisted Floer homology groups. Moreover, $t$ acts as the class of the capped-off surface $T\in H_2(Y_{\lambda+\mu})$. Since $T$ acts as the identity on $\tHFoo(Y_{\lambda+\mu})$, for all sufficiently large degrees the map $F$ is multiplication by $(1-t)$, and in particular it induces a surjection on the towers in $\tHFp\Yt$ for each torsion \spinc structure $\ft$ on $Y$. Now the argument runs as in the untwisted case to show the desired inequality; compare with \cite{OzsvathSzabo_HF_properties_2004}*{Theorem~9.1}.

In case (3), we use the surgery triangle of \cref{p:twisted_triangle}:
\[
\xymatrix{
\tHFp(Y)\ar[rr]^F & & \HFp(Y_\lambda;M_K)\ar[dl]^G\\
 & \tHFp(Y_{\lambda+\mu})\ar[ul]^H
}
\]
As in the proof of \cref{T:2-handles_essential} we show that the infinity version of~$G$ has the same kernel as the action of the dual knot of~$K$, say~$K'\subset Y_\lambda$.
Moreover, the usual argument shows that the infinity version of~$F$, which is just~$F^\infty_{W_\lambda,\fs}$, is injective; and by exactness it injects into~$\ker[K']$ which, according to \cref{p:twistedcomputation}, is graded isomorphic to~$\tHFoo(Y_\lambda,\ft')$.
Again observing that the argument goes through with arbitrary field coefficients, we see that $F^\infty_{W_\lambda,\fs}$ maps isomorphically onto~$\ker[K']$.
\end{proof}

\begin{proof}[Proof of \cref{t:main_inequality}]
The key is the standard observation that whenever we have a cobordism~$\Ws$ between torsion \spinc 3--manifolds~$\Yt$ and~$(Y',\ft')$ such that~$F^\infty_{W,\fs}$ is an isomorphism between fully twisted coefficients, then $\ud\Yt +\deg F^+_{W,\fs}\le \ud(Y',\ft')$, as an easy diagram chase shows.
Unfortunately, we cannot apply this argument directly because in general the target of the cobordism maps will not have fully twisted coefficients.

To circumvent this problem, we observe that the left-hand side of the inequality~\eqref{eq:main inequality} is additive while the  right-hand side behaves telescopically when two negative semidefinite cobordisms are composed.
Conversely, one can also show that the left-hand  side splits appropriately when~$W$ is cut along a separating \mbox{$3$--manifold} in its interior.
It would therefore be enough to prove \cref{t:main_inequality} for 
cobordisms consisting of single handle attachments.
In fact, this strategy works quite well since \cref{T:1 and 3 handles} covers 1-- and 3--handles, while \cref{T:2-handles_torsion} allow us to run the standard argument mentioned above.
What remains are 2--handle attachments along knots in essential homology classes.
It turns out that these actually cannot be treated separately but have to be paired with 1--handles.
It is at this point that the assumption on the map $H_1(Y;\Q)\ra H_1(W;\Q)$ becomes relevant and we are forced with the coefficient systems used in \cref{T:2-handles_essential}.

As a last preparatory remark, we can restrict our attention to the case when $H_1(Y;\Q)\ra H_1(W;\Q)$ is not only injective but actually an isomorphism.
Indeed, if it is not surjective, say it has corank~$k$, then we perform surgery on an embedded circle~$C\subset W\setminus\del W$ which represents a non-zero class in~$H_1(W;\Q)$ not contained in the image of~$H_1(Y;\Q)$.
The resulting cobordism~$W'$ has the same boundary as~$W$ and is easily seen to satisfy~$b_2^\pm(W')=b_2^\pm(W)$ and $H_1(Y;\Q)\ra H_1(W;\Q)$ has corank~$k-1$. 
Moreover, the restriction of~$\fs$ to~$W\setminus\nu C$ extends to~$W'$ and any such extension~$\fs'$ satisfies $c_1^2(\fs')=c_1^2(\fs)$.
In particular, the left-hand  side of~\eqref{eq:main inequality} is the same for~$\Ws$ and~$(W',\fs')$.
By successive surgeries we can therefore cut down~$H_1(W;\Q)$ to the image of~$H_1(Y;\Q)$.

We now begin the actual proof.
We choose a handle decomposition of~$W$ and put it in \emph{standard ordering} as defined by Ozsváth and Szabó (see~\cite{OzsvathSzabo_4mfs_gradings_2003}*{p.243}).
This means that 
	the handles are attached in order of increasing index and, moreover, 
	the 2--handle attachments are ordered such that $b_1$ of the intermediate $3$--manifolds first decreases, then stays constant, and finally increases.
For the existence of such a handle decomposition, see \cite{OzsvathSzabo_4mfs_gradings_2003}*{p.244}.
We cut $W$ into two pieces $W_{12}\cup_{N}W_{23}$ such that~$W_{12}$ contains all 1--handles and the decreasing 2--handles while $W_{23}$ contains the remaining 2-- and $3$--handles.
Observe that the $b_1$--decreasing 2--handles are exactly those that are attached along essential knots. 
So by the above remarks \cref{t:main_inequality} holds for~$W_{23}$ and we can restrict our attention to~$W_{12}$.
Since we are assuming that $H_1(Y;\Q)\ra H_1(W;\Q)$ is an isomorphism, there must be exactly as many $b_1$--decreasing 2--handles as there are 1--handles, say we have~$n$ each. 
Our goal is to show that~$(W_{12},\fs)$ induces an isomorphism between $\tHFoo\Yt$ and~$\tHFoo(N,\ft_N)$ where~$\ft_N=\fs|_N$ is easily seen to be torsion.
We further decompose~$W_{12}$ into pieces~$V_1$ and~$V_2$ along~$Y\#\mathbb{S}_n$ where~$V_i$ contains all $i$--handles.
Note that the attaching circles 
	$K_1,\dots,K_n\subset Y\#\mathbb{S}_n$
of the 2--handles span the subspace $H_1(\mathbb{S}_n;\Q)\subset H_1(Y\#\mathbb{S}_n;\Q)$.
In particular, $W_{12}$ is a rational homology cobordism which, in turn, implies that the twisted cobordism map has the correct functoriality.
To show that it is an isomorphism we invoke the composition law for twisted coefficients~\cite{JabukaMark_product_formulae_2008}*{Section~2.3}.
On the one hand, we observe that in the identification of \cref{eg:standardness}(iii) we have
	\begin{equation*}
	F^\infty_{V_1,\fs}\big(\tHFoo(Y,\ft)\big)\cong \tHFoo(Y,\ft)\otimes\Lambda^nH_2(\mathbb{S}_n),
	\end{equation*}
which follows from the definition of the maps induces by 1--handles~\cite{OzsvathSzabo_4mfs_triangles_2006}*{Section~4.3}.
On the other hand, \cref{T:2-handles_essential} applies to the 2--handles with~$V=H_2(\mathbb{S}_n)$ and shows that~$F^\infty_{V_2,\fs}$ maps the image of~$F^\infty_{V_1,\fs}$ isomorphically onto~$\tHFoo(N,\ft_N)$.
We can therefore conclude that we have an isomorphism $F^\infty\colon\tHFoo\Yt\rao{\cong}\tHFoo(N,\ft_N)$, which finishes the proof.
\end{proof}

\section{Intersection forms of smooth fillings}\label{ch:intersection_forms}
We already mentioned that \cref{T:intersection form bound} imposes restrictions on the possible intersection forms of smooth $4$--manifolds with fixed boundary.
We will now make the nature of these restrictions more precise.
We begin with some general remarks about non-degenerate symmetric bilinear forms over the integers. 
Let~$L$ be a free Abelian group of rank~$n$ equipped with an integer-valued symmetric bilinear form~$S$ and let~$d={\left|\det S\right|}$. 
Recall that $S$ is called \emph{non-degenerate} if~$d\neq0$ and \emph{unimodular} if~$d=1$.
We will refer to the expressions of the form~$S(x,x)$, $x\in L$, as \emph{squares} of~$S$.
We say that $S$~is \emph{semidefinite} (or simply \emph{definite} in the non-degenerate case) if all non-zero squares 
have the same sign, and \emph{indefinite} otherwise.
Furthermore, $S$~is called \emph{even} if all squares are even, and \emph{odd} otherwise.
If $S$ is non-degenerate then $L$ canonically embeds into the \emph{dual group} $L^*=\Hom_\Z(L,\Z)$ as a subgroup of index~$d$.
Consequently, we can identify~$L$ with its image in~$L^*$ and extend~$S$ to a rational-valued form on~$L^*$ as follows.
For any $\lambda\in L^*$ we have~$d\lambda\in L$ and we set
	\begin{equation*}
	S^*(\lambda,\mu) 
	= \tfrac{1}{d^2}S(d\lambda,d\mu) 
	= \tfrac1d \lambda(d\mu)
	\in \tfrac1d\Z\subset\Q.
	\end{equation*}
for any pair~$\lambda,\mu\in L^*$. 
\begin{remark}\label{R:lattice lingo}
A less intrinsic but more geometric picture emerges when we embed $L$ as a \emph{lattice} in~$\R^n$ in such a way that~$S$ corresponds to the standard inner product with the same signature as~$S$ (which is possible by Sylvester's law of inertia).
After fixing such an embedding $L\subset\R^n$ one can conveniently think of~$L^*$ as the \emph{dual lattice}
	$\Set{y\in\R^n}{x\cdot y\in\Z\;\forall x\in L}$
leading to a chain of inclusions~$L\subset L^*\subset\R^n$ and both~$S$ and~$S^*$ are given by the relevant inner product on~$\R^n$.
\end{remark}
The main purpose for introducing~$L^*$ is that it serves as a host for the \emph{characteristic covectors} of~$S$ which form the set
	\begin{equation*}
	\chi^*(S)=\Set{\kappa\in L^*}{\kappa(x)\equiv S(x,x)\mod{2}\;\forall x\in L}.
	\end{equation*}
From these we extract a numerical invariant sometimes called the \emph{shadow length}
	\begin{equation*}
	s(S)= \min\Set{|\kappa^2|}{\kappa\in\chi^*(S)}\in\Q.
	\end{equation*}
Note that in the lattice picture $s(S)$ measures the length of the shortest characteristic covector of~$S$.
For cosmetic reasons we also introduce the \emph{shadow colength}
	\begin{equation*}
	\bar s(S) = n - s(S)\in \Q.
	\end{equation*}
To the best of our knowledge these invariants first appeared implicitly in the work of Elkies~\cites{Elkies_trivial_lattice_1995,Elkies_long_shadows_1995} which was inspired by Donaldson's theorem.
We will say more about their algebraic significance after explaining the relation to \cref{T:intersection form bound}.
Now let~$Z$ be a smooth filling of a fixed $3$--manifold~$Y$ and let~$\ker(Q_Z)$ be the kernel of the intersection form on~$H_2(Z)$.
The quotient~$L_Z=H_2(Z)/\ker(Q_Z)$ is easily seen to be free Abelian of rank $b_2^+(Z)+b_2^-(Z)$ and $Q_Z$ descends to a non-degenerate form on~$L_Z$, henceforth denoted by $S_Z$, which we will refer to as the \emph{non-degenerate intersection form} of~$Z$.
Together with the observation that~$\ker(Q_Z)$ contains the image of~$H_2(Y)$ as a subgroup of full rank the universal coefficient theorem gives identifications
	\begin{align*}
	L_Z^*
	\cong &\Set{\xi\in H^2(Z)}{\scp{\xi,x}=0\;\forall x\in\ker(Q_Z)}\big/\mathrm{torsion} \\
	= &\Set{\xi\in H^2(Z)}{\text{$\xi|_Y\in H^2(Y)$ is torsion}}\big/\mathrm{torsion}.
	\end{align*}
Moreover, an inspection of the homology sequence of the pair shows that~$L_Z^*/L_Z$ injects into the torsion subgroup of~$H_1(Y)$ so that $|\det(S_Z)|$ is bounded by the order of the torsion subgroup of $H_1(Y)$.
In order to state a more algebraic reformulation of \cref{T:intersection form bound} we introduce the notation
	\begin{equation*}
	\delta(Y)= \max\Set{\delta\Yt}{\text{$\ft\in\Spinc(Y)$ torsion}}
	\end{equation*}
which gives an invariant that does not depend on any \spinc structure but only on~$Y$.
\begin{theorem}\label{T:intersection form bound_characteristic version}
Let~$Z$ be a smooth filling of~$Y$.
If~$S_Z$ is negative definite, then any characteristic covector $\kappa\in\chi^*(S_Z)$ satisfies
	\begin{equation*}
	b_2^-(Z) + \kappa^2 \leq \delta(Y).
	\end{equation*}
In other words, we have an upper bound on the shadow colength~$\bar s (S_Z)\leq  \delta(Y)$.
\end{theorem}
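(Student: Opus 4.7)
The plan is to realise every characteristic covector $\kappa \in \chi^*(S_Z)$ as the first Chern class of a spin-c structure $\fs$ on $Z$ whose restriction $\ft = \fs|_Y$ is a torsion spin-c structure on $Y$. Once this is done, applying \cref{T:intersection form bound} to $(Z,\fs)$ yields $c_1^2(\fs) + b_2^-(Z) \le \delta(Y,\ft) \le \delta(Y)$, and it only remains to check that $c_1^2(\fs)$ computes the same rational number as $\kappa^2$.

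To carry out the realisation, I would use the identification
\[
L_Z^* \cong \{\xi\in H^2(Z)\mid \xi|_Y \text{ is torsion}\}/\mathrm{torsion}
\]
set up in the paragraph preceding the statement. Given $\kappa$, lift it to some $\xi \in H^2(Z)$ with torsion restriction to $Y$. The canonical surjection $H_2(Z)/\mathrm{torsion} \twoheadrightarrow L_Z$ (whose kernel lies in the radical of $Q_Z$, and hence has trivial mod-$2$ self-intersection) pulls the characteristic condition $\kappa(x) \equiv S_Z(x,x)\pmod{2}$ back to $\langle\xi,\alpha\rangle \equiv \alpha\cdot\alpha\pmod{2}$ for every $\alpha \in H_2(Z)$. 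Since this mod-$2$ condition depends only on $\xi$ modulo torsion, every such lift is characteristic in $H^2(Z)$. Because the first Chern classes of spin-c structures on $Z$ exhaust all characteristic elements of $H^2(Z)$, there exists $\fs \in \Spinc(Z)$ with $c_1(\fs) = \xi$, and by construction $c_1(\ft) = \xi|_Y$ is torsion.

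The equality $c_1^2(\fs) = \kappa^2$ is then forced by the compatibility of pairings: as $\xi|_Y$ is torsion, a suitable integer multiple of $\xi$ lifts to $H^2(Z,\partial Z)$, and the resulting rational-valued self-pairing is precisely the extension $S^*$ of $S_Z$ to $L_Z^*$. Plugging into \cref{T:intersection form bound} gives the first inequality. The reformulation $\bar s(S_Z) \le \delta(Y)$ follows by taking the supremum of $\kappa^2$ over characteristic covectors (equivalently, the infimum of $|\kappa^2|$, since $S_Z$ is negative definite on $L_Z$) and observing that $\rk L_Z = b_2^-(Z)$ because $Z$ is negative semidefinite. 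The main subtlety lies in the back-and-forth between the lattice-level characteristic condition on $L_Z^*$ and the four-dimensional characteristic condition on $H^2(Z)$; once one disentangles the surjection $H_2(Z)/\mathrm{torsion} \twoheadrightarrow L_Z$ and the universal coefficient theorem for $H^2(Z)$, the argument becomes essentially formal.
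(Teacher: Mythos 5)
Your approach is essentially the paper's: both reduce the statement to \cref{T:intersection form bound} via the folklore correspondence between \spinc structures on~$Z$ with torsion restriction to~$Y$ and characteristic covectors of~$S_Z$, together with the identification $c_1^2(\fs)=\kappa_\fs^2$.

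One step is imprecise as stated. You claim that any lift $\xi\in H^2(Z)$ of $\kappa$ satisfying $\scp{\xi,\alpha}\equiv\alpha\cdot\alpha\pmod 2$ for all $\alpha\in H_2(Z;\Z)$ is automatically the first Chern class of a \spinc structure. But $c_1(\fs)$ ranges over integral lifts of $w_2(Z)$, and the condition you verify only pins down the mod--$2$ reduction of $\xi$ on the image of $H_2(Z;\Z)$ in $H_2(Z;\Z/2)$; if $H_1(Z)$ has $2$--torsion, $H_2(Z;\Z/2)$ contains an extra $\Tor(H_1(Z;\Z),\Z/2)$ summand invisible to integral classes, so such a $\xi$ need not reduce to $w_2(Z)$. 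The standard repair is the paper's route for surjectivity: start from an arbitrary $\fs_0\in\Spinc(Z)$ (whose associated $\kappa_{\fs_0}$ lies in $\chi^*(S_Z)$), write $\kappa-\kappa_{\fs_0}=2\lambda$ with $\lambda\in L_Z^*$, lift $\lambda$ to $H^2(Z)$ and act on $\fs_0$; this uses the free transitive action of $2L_Z^*$ on $\chi^*(S_Z)$ realised by the $H^2(Z)$--action on $\Spinc(Z)$ and sidesteps the $2$--torsion issue entirely. The rest of your argument (the identification of $L_Z^*$, the equality $c_1^2(\fs)=\kappa^2$ via a relative lift of a multiple of $\xi$, and the reformulation in terms of $\bar s$) is fine.
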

\begin{proof}
This is an immediate consequence of \cref{T:intersection form bound} once we understand the relationship between \spinc structures on~$Z$ and characteristic covectors of~$S_Z$.
Since this is common folklore, we shall be brief.
By the above identification of~$L_Z^*$ any~$\fs\in\Spinc(Z)$ such that~$c_1(\fs|_Y)$ is torsion gives rise to an element~$\kappa_\fs\in L_Z^*$.
Moreover, we have~$\kappa_\fs(x)=\scp{c_1(\fs),\bar x}$ for any~$\bar x\in H_2(Z)$ representing~$x\in L_Z$ which shows that~$\kappa_\fs\in\chi^*(S_Z)$.%
	\footnote{Recall that $c_1(\fs)$ reduces to $w_2(Z)$ and that~$x^2\equiv\scp{w_2(Z),x}\mod{2}$ for all $x\in H_2(Z)$.}
One readily checks that $\chi^*(S_Z)$ has a free and transitive action of~$2L_Z^*$ which can be realized by the action of~$H^2(Z)$ on~$\Spinc(Z)$. 
Hence, all characteristic covectors have the form~$\kappa_\fs$ for some~$\fs$.
It remains to show that $\kappa_\fs^2=c_1^2(\fs)$ which, in essence, follows from the definitions and Poincar\'e duality.
\end{proof}
Having identified the shadow colength as the algebraic invariant obstructed by the $\delta$--invariant, and thus by the twisted correction terms, we now take a closer look from an algebraic perspective.
We restrict our attention to a negative definite form~$S$.
An important feature is that $\bar s(S)$ a priori lies in a bounded range
	\begin{equation}\label{eq:shadow length range}
	0 \;\le\; \bar s(S) \;\le\; \rk(L).
	\end{equation}
The right inequality holds by definition with equality precisely when~$S$ is even (both conditions are equivalent to~$0\in\chi^*(S)$).
The left inequality was first proved by Elkies~\cite{Elkies_trivial_lattice_1995} for unimodular forms and was extended by Owens and Strle~\cite{OwensStrle_Elkies_generalization_2012} to the general case.
More interestingly, their results also show that the equality~$\bar s(S)=0$ hold if and only if~$S\cong \bs{I}_r=r\scp{-1}$ where~$r=\rk(L)$.
This already shows that $\bar s$ is a powerful invariant.
\cref{T:intersection form bound_characteristic version} together with~\eqref{eq:shadow length range} yields the following.
\begin{corollary}\label{T:positive delta}
If~$Y$ has a smooth, negative semidefinite filling, then $\delta(Y)\ge 0$.
\end{corollary}
Next we observe that~$\bar s$ is additive and thus invariant under addition of~$\bs I_r$.
But any negative definite form~$S$ can be written as $S\cong S_0\oplus \bs I_r$ where $S_0$~is \emph{minimal} in the sense that it has no element of square~$-1$ so that $\bar s(S)=\bar s(S_0)$.
Moreover, the number~$r$ and the isomorphism class of~$S_0$ are uniquely determined by~$S$.

\begin{corollary}\label{T:finiteness theorem_almost even}
Let $Z$ be a smooth filling of~$Y$.
Suppose that~$S_Z$ is negative definite and splits as $S_Z\cong S_0\oplus \bs I_r$ with~$S_0$ minimal and even.
Then $\rk(S_0)\le\delta(Y)$.
In particular, there is a finite list of possible forms.
\end{corollary}
Note that this includes \cref{T:finiteness theorem} as a special case for~$r=0$. 
\begin{proof}
Since $S_0$ is even, its rank agrees with~$\bar s(S_0)=\bar s(S_Z)$ and the bound follows from \cref{T:intersection form bound_characteristic version}.
Moreover, the determinant of~$S_0$ agrees up to a sign with that of~$S_Z$ which is bounded in absolute value by the order of the torsion subgroup of~$H_1(Y)$.
Since there are only finitely many isomorphism classes of definite forms with given rank and determinant, the result follows.
\end{proof}
It is an interesting question whether the assumption that~$S_0$ is even is necessary in \cref{T:finiteness theorem_almost even}.
In essence, this was already asked by Elkies~\cite{Elkies_long_shadows_1995}*{p.650}.
\begin{question}[Elkies]\label{Q:rank bound for minimal lattices}
Does an upper bound on~$\bar s(S_0)$ for a minimal (unimodular) form~$S_0$ imply an upper bound on the rank of~$S_0$?
\end{question}
As far as we know, this question is still open.
Some evidence for an affirmative answer is available in the unimodular case. 
Elkies showed that there are exactly 14~non-trivial minimal unimodular lattices with~$\bar s(L_0)\leq8$~\cite{Elkies_trivial_lattice_1995,Elkies_long_shadows_1995}; in addition, rank bounds are known for~$\bar s(L_0)\leq 24$~\cite{Gaulter_characteristic_Stuff_2007,NebeVenkov_long_shadows_2003}.
%

\section{Computations and applications}
	\label{ch:applications}

After the abstract algebraic considerations in \cref{ch:intersection_forms} we now turn to more concrete problems. 
We begin by giving a computation of the twisted correction terms of~$\Sigma_g\times S^1$ for a  surface~$\Sigma_g$ of arbitrary genus~$g$.
For~$g\ge1$ these are arguably the simplest examples of $3$--manifolds with non-standard~$\HFoo$ and as such they are not accessible to the previously available (untwisted) correction terms.

\subsection{A surface times a circle}
Recall from \cref{eg:T3 and S1xS2} that~$\ud(S^1\times S^2,\ft_0)=-\tfrac12$ and~$\ud(T^3,\ft_0) = \tfrac12$.
It turns out that this pattern continues as follows.
\begin{theorem}\label{p:allgenera}\label{T:surface times circle}
Let~$\Sigma_g$ be a closed, oriented surface of genus~$g$.
Then the unique torsion \spinc structure~$\ft_0$ on the product $\Sigma_g\times S^1$ satisfies
	\begin{equation*}
	\ud(\Sigma_g\times S^1,\ft_0) = 
	\begin{cases}
	-\tfrac 12 & \text{$g$ even} \\
	+\tfrac 12 & \text{$g$ odd}
	\end{cases}
	\end{equation*}
In other words, we have
	\begin{equation*}
	  \delta(\Sigma_g\times S^1)
	= \delta(\Sigma_g\times S^1,\ft_0) 
	= 8\left\lceil\tfrac{g}{2}\right\rceil
	\end{equation*}
where~$\lceil\cdot\rceil$ is the ceiling function.
\end{theorem}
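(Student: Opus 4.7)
The plan is to proceed by induction on $g$, with base cases $g=0,1$ supplied by \cref{eg:T3 and S1xS2}, where $\Sigma_0\times S^1 = S^1\times S^2$ and $\Sigma_1\times S^1 = T^3$ are already computed.

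For the inductive step, I would present $\Sigma_g\times S^1$ as $0$-surgery on a null-homologous knot in a 3-manifold with known twisted correction term. A natural choice comes from the standard handle decomposition of $\Sigma_g\times D^2$: after attaching $2g$ one-handles to $B^4$ one reaches boundary $\#^{2g}(S^1\times S^2)$, and then attaching a single two-handle along the null-homologous surface-word curve $\prod_{i=1}^{g}[a_i,b_i]$ with its product-induced $0$-framing produces $\Sigma_g\times S^1$ on the other end.

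From here, the surgery exact triangle of \cref{p:twisted_triangle} relates $\tHFp$ of $\#^{2g}(S^1\times S^2)$ (for which \cref{p:additive} yields $\ud=-g$), $\tHFp(\Sigma_g\times S^1;M_K)$ (the $0$-framed vertex, with twisted coefficient module $M_K$), and the $(\lambda+\mu)$-surgery vertex. The key technical input is \cref{p:twistedcomputation}, which identifies $\HFoo(\Sigma_g\times S^1;M_K)\cong \Too\oplus\Too[1]$ as a rank-two module. Combining this with the long exact sequence and tracking the cobordism map on $\HFoo$ originating from $\tHFoo(\#^{2g}(S^1\times S^2))$, one can read off the minimal grading of the image of $\pi_*$ in $\tHFp(\Sigma_g\times S^1;M_K)$, and hence, after a universal coefficient argument, in the fully twisted $\tHFp(\Sigma_g\times S^1,\ft_0)$. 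The main technical obstacles are two-fold: first, translating $M_K$-twisted information into the fully twisted theory via the universal coefficient spectral sequence of \eqref{eq:UCSS basic}; and second, the grading bookkeeping needed to verify that the $(\lambda+\mu)$-surgery vertex does not contribute to the bottom of the tower in $\tHFp(\Sigma_g\times S^1,\ft_0)$. This mirrors the grading chase of \cite{OzsvathSzabo_4mfs_gradings_2003}*{Proposition~8.5} for $T^3$, with the added subtlety of tracking twisted coefficient modules through the surgery triangle.

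Once $\ud(\Sigma_g\times S^1,\ft_0) = \pm\tfrac12$ with the inductively determined parity is established, the formula for $\delta$ is immediate from $\delta\Yt = 4\ud\Yt + 2b_1(Y)$ together with $b_1(\Sigma_g\times S^1) = 2g+1$; a short parity analysis yields $\delta_0 = 8\lceil g/2\rceil$. Finally, since $H^2(\Sigma_g\times S^1)$ is torsion-free, the map $\ft\mapsto c_1(\ft)$ is injective on $\spinc(\Sigma_g\times S^1)$, so $\ft_0$ is the unique torsion \spinc structure and consequently $\delta(\Sigma_g\times S^1) = \delta_0(\Sigma_g\times S^1)$.
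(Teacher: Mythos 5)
Your sketch diverges from the paper's argument in a way that leaves two essential gaps unfilled. First, the third vertex of your surgery triangle is $(+1)$--surgery on the surface--word curve in $\#^{2g}(S^1\times S^2)$, i.e.\ the Euler number $\pm1$ circle bundle over $\Sigma_g$; its (twisted) Heegaard Floer homology is not known a priori and is a computation of essentially the same difficulty as the one you are trying to perform, so the "grading bookkeeping" showing that this vertex does not interfere with the bottom of the tower cannot even be set up. This is precisely why the paper avoids a one--step surgery presentation: for the only genus it computes directly ($g=2$, \cref{p:genus2}) it uses a chain of five surgery triples $M(a,b,c,d)$ in which every third vertex is controlled (trefoil surgeries, $T^3$, connected sums with $S^2\times S^1$), each step requiring its own diagram chase (\cref{l:trefoil0,l:doubletrefoil0,l:M0111}). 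Second, the surgery triangle of \cref{p:twisted_triangle} outputs $\tHFp(\Sigma_g\times S^1;M_K)$, where $M_K$ is untwisted in the Seifert--surface direction; the universal coefficient spectral sequence \eqref{eq:UCSS basic} computes $M_K$--coefficients \emph{from} the fully twisted groups, not the other way around, so "translating $M_K$--twisted information into the fully twisted theory" is not a routine step but an unsolved extension/lifting problem in your outline. Note also that your argument is not actually an induction: the first vertex $\#^{2g}(S^1\times S^2)$ is handled by \cref{p:additive} for all $g$ simultaneously, and the inductive hypothesis for $\Sigma_{g-1}\times S^1$ is never used.

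For contrast, the paper splits the statement into two inequalities obtained by entirely different means. The lower bound $\delta(\Sigma_g\times S^1)\ge 8\lceil g/2\rceil$ comes from an explicit even, negative semidefinite symplectic filling (\cref{l:even}) fed into \cref{T:intersection form bound}; a pure surgery--triangle argument gives no such bound unless it determines the groups completely. The upper bound comes from a $2$--handle cobordism $\Sigma_g\times S^1\to(\Sigma_{g-2}\times S^1)\#(\Sigma_2\times S^1)$ and \cref{t:main_inequality}, reducing everything to the base cases $g=0,1,2$ — and the case $g=2$ is the long computation of \cref{sss:genus2} that your proposal implicitly tries to replace. As written, your proposal does not constitute a proof.
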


We split the proof into two parts.
We first exhibit an explicit filling that realises the lower bound~$\delta(\Sigma_g\times S^1)$.
The second part is an inductive argument based on a computation of~$\ud(\Sigma_2\times S^1,\ft_0)$ which will occupy most of the present section.

\begin{proposition}\label{l:even}
$\Sigma_g\times S^1$ has a smooth filling~$Z_g$ with even, negative semidefinite intersection form of rank $b_2^-(Z)=8\left\lceil\tfrac{g}{2}\right\rceil$.
In particular, we have $\delta(\Sigma_g\times S^1)\geq 8\left\lceil\tfrac{g}{2}\right\rceil$.
\end{proposition}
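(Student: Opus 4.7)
The plan is to construct explicit fillings $Z_g$ of $\Sigma_g\times S^1$ with the stated properties, then apply \cref{T:intersection form bound}. For the base cases: take $Z_0 = S^2\times D^2$ for $g=0$ (trivial intersection form, $b_2^-=0$), and $Z_1 = E(1)\setminus\nu F$ for $g=1$, where $E(1) = \mathbb{CP}^2\#9\overline{\mathbb{CP}^2}$ is the rational elliptic surface and $F$ is a regular torus fibre, representing $3e_0-e_1-\cdots-e_9$ in $H_2(E(1)) = \langle 1\rangle\oplus 9\langle-1\rangle$. Since $F$ is a primitive isotropic vector in the unimodular signature-$(1,9)$ lattice, standard lattice theory identifies $F^\perp/\langle F\rangle$ as unimodular of signature $(0,8)$; a direct parity check using the constraint $3x_0+x_1+\cdots+x_9=0$ defining $F^\perp$ shows it is even, so $F^\perp/\langle F\rangle \cong -E_8$. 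A Mayer--Vietoris argument identifies this lattice with the non-degenerate part of the intersection form of $Z_1$, giving $b_2^-(Z_1)=8 = 8\lceil 1/2 \rceil$.

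For general $g\geq 2$, I would build $Z_g$ from $\Sigma_g\times D^2$ by performing $\lceil g/2\rceil$ \emph{$E_8$-insertions}. Each insertion removes a tubular neighbourhood of an embedded torus $T_i = \alpha_i\times S^1_{\epsilon_i}\subset\Sigma_g\times D^2$ (where $\alpha_i\subset\Sigma_g$ is a non-separating curve and $S^1_{\epsilon_i}\subset D^2$ a small concentric circle; the torus $T_i$ has trivial normal bundle since both directions come from trivial line bundles) and glues in a copy of $Z_1$ along the resulting $T^3$ boundary. Choosing distinct curves $\alpha_i$ and distinct radii $\epsilon_i$ makes the $T_i$ disjoint, so up to $\lceil g/2\rceil \le g$ such insertions fit inside $\Sigma_g\times D^2$. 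By Novikov additivity, $\sigma(Z_g) = -8\lceil g/2\rceil$; a Mayer--Vietoris analysis applied at each insertion shows that no new positive classes are introduced in $H_2$, and the non-degenerate part of the intersection form of $Z_g$ is the orthogonal sum of $\lceil g/2\rceil$ copies of $-E_8$. Hence $Z_g$ is even, negative semidefinite, and $b_2^-(Z_g) = 8\lceil g/2\rceil$.

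The ``in particular'' claim follows by applying \cref{T:intersection form bound} to $Z_g$ equipped with a \spinc{} structure $\fs$ with $c_1(\fs)=0$ (which exists because the intersection form of $Z_g$ is even): the inequality gives $b_2^-(Z_g)\leq \delta(\Sigma_g\times S^1)$, hence $\delta(\Sigma_g\times S^1)\geq 8\lceil g/2\rceil$. The most delicate step will be the Mayer--Vietoris analysis of the second paragraph: one must show that the gluings never create spurious hyperbolic summands (which would force $b_2^+>0$). This uses crucially that the positive classes of $E(1)$, such as sections, meet the fibre non-trivially and so are absent from $E(1)\setminus\nu F$, together with the fact that the new $H_2$-classes arising via the connecting map from the $H_1$ of the intermediate $T^3$'s are null-homologous when paired against the rest of $Z_g$.
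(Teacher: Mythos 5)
Your base cases are fine ($g=0$, and your $Z_1=E(1)\setminus\nu F$ is in fact exactly the filling the paper uses for $g=1$), but the inductive construction for $g\ge 2$ has a fatal flaw: the manifold obtained by an ``$E_8$--insertion'' along $T_i=\alpha_i\times S^1_{\epsilon_i}$ is \emph{not} negative semidefinite. The source of the problem is that $T_i$ is null-homologous in $\Sigma_g\times D^2$ (it bounds the solid torus $\alpha_i\times D_{\epsilon_i}$), and a torus sum with $E(1)$ along such a torus always creates a class of positive square. Concretely, let $A=[\Sigma_g\times\{0\}]$, which survives into $Z_g$ and has $A^2=0$. On the splitting $T^3$ the curve $s=\{pt'\}\times S^1_{\epsilon_i}$ (with $pt'\in\Sigma_g$ just off $\alpha_i$) bounds the disc $D'=\{pt'\}\times D_{\epsilon_i}$ on the $\Sigma_g\times D^2$ side; since $E(1)\setminus\nu F$ is simply connected, the image of $s$ under \emph{any} gluing diffeomorphism bounds a surface $\Delta$ on the other side. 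The closed class $R=[D'\cup\Delta]$ then satisfies $R\cdot A=\pm1$, because $D'$ meets $\Sigma_g\times\{0\}$ transversally in the single point $(pt',0)$ while $\Delta$ is disjoint from it. Hence the intersection form restricted to $\langle A,R\rangle$ is $\bigl(\begin{smallmatrix}0&1\\1&R^2\end{smallmatrix}\bigr)$, which has determinant $-1$ and is therefore indefinite, so $b_2^+(Z_g)\ge 1$ no matter how the gluing is chosen. Your closing caveat about the classes coming from the connecting homomorphism identifies exactly the step that breaks: those classes are not orthogonal to the rest, they are dual to $[\Sigma_g]$.

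For contrast, the paper sidesteps this entirely by realising the filling as the complement of a single connected symplectic genus--$g$ surface $C$ of square $0$ in $X=\mathbb{CP}^2\#(4g+5)\overline{\mathbb{CP}}{}^2$, in the class $(2g+1)h-g(e_1+\cdots+e_4)-(e_5+\cdots+e_{4g+5})$: negative semidefiniteness is then automatic from $b_2^+(X)=1$ and $[C]^2=0$, evenness follows because $[C]$ is characteristic when $g$ is odd, and even $g$ is reduced to $g-1$ by capping with the product cobordism $(\text{1--handle cobordism})\times S^1$. If you want to keep an $E(1)$--based construction, you must arrange for the positive part of the ambient closed manifold to be annihilated by a single square-zero class supported on the surface you remove; summing into $\Sigma_g\times D^2$ along null-homologous tori cannot achieve this.
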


In \cref{p:gamma} below we will also determine the intersection form of the $4$--manifold~$Z_g$ constructed below.

\begin{proof}
We first construct a 4--manifold~$Z_g'$ as the complement of a (symplectic) genus-$g$ surface of self-intersection 0 in a blow-up of $\CP$. 
We start with a configuration of $g+1$ complex curves of which $g$~are smooth generic conics in a pencil, and the remaining one is a generic line.
This configuration has $2g$ double points and four points of multiplicity $g$. One can resolve the double points in the symplectic category, hence obtaining a symplectic curve with four points of multiplicity~$2g$.
We now blow up $\CP$ at these points, and at $4g+1$ generic points of the curve. 
Taking the proper transform gives a smooth symplectic curve~$C$ of self-intersection 0 in $X = \CP\#(4g+5)\CPbar$ in the homology class 
	\begin{equation*}
	[C] = (2g+1)h-g(e_1+\dots+e_4)-(e_5+\dots+e_{4g+5}).
	\end{equation*}
The canonical divisor $K_X$ of $X$ is Poincar\'e dual to $e_1+\dots+e_{4g+5}-3h$, hence the adjunction formula reads
	\begin{equation*}
	0 = \langle K_X, C\rangle + C^2 + \chi(C) = 4g-(4g+1)-(2g+1) + 2-2g(C),
	\end{equation*}
showing that $C$ has genus $g(C)=g$.
In particular, the complement $Z_g'$ of an open, regular neighbourhood of $C$ in $X$ is a filling of~$\Sigma_g\times S^1$ and we claim that for odd~$g$ it has all the required properties.
In fact, it is negative semidefinite, since $C^2 = 0$ and $b_2^+(X) =1$; moreover, since $b_2^-(X) = 4g+5$, we have that 
	\begin{equation*}
	b_2^-(Z_g')=b_2^-(X)-1 = 4g+4.
	\end{equation*}
Finally, $[C]$ is easily seen to be characteristic in $H_2(X)$ if $g$~is odd, hence the intersection form on the complement is even: in fact, if $x\in[C]^\perp$, then $x^2 \equiv x\cdot[C] = 0$.

For odd~$g$ we can therefore take $Z_g=Z_g'$.
For even~$g$ use the following trick.
Let~$V_g$ be a cobordism from~$\Sigma_g$ to~$\Sigma_{g+1}$ obtained by attaching a $3$--dimensional 1--handle and let~$W_g=V_g\times S^1$.
Then the intersection form on~$H_2(W_g)$ is trivial and $H_1(\Sigma_g\times S^1)$ injects into~$H_1(W_g)$.
In particular, a Mayer--Vietoris argument shows that if $Z$ is a filling of~$\Sigma_g\times S^1$ with $b_2^+(Z)=0$, then $Z\cup W_g$ is any filling of~$\Sigma_{g+1}\times S^1$ with 
	$b_2^+(Z\cup W_g)=0$ and 
	$b_2^-(Z\cup W_g)=b_2^-(Z)$.
Moreover, if $Z$ has an even intersection form, then so does $Z\cup W_g$.
So for $g$~even and positive we let $Z_g=Z_{g-1}\cup W_{g-1}$.
\end{proof}

The second ingredient for our proof of \cref{p:allgenera} is the following special case.
\begin{proposition}\label{p:genus2}
The correction term of $\Sigma_2\times S^1$ with its unique torsion \spinc structure $\ft_0$ is $\ud(\Sigma_2\times S^1,\ft_0) = -\tfrac12$.
\end{proposition}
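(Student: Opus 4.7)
The plan is to establish matching lower and upper bounds. For the lower bound $\ud(\Sigma_2 \times S^1, \ft_0) \geq -\tfrac{1}{2}$ I would apply \cref{T:intersection form bound} to the even negative semidefinite filling $Z$ provided by \cref{l:even}. Since $Q_Z$ is even one can pick a spin extension with $c_1(\fs) = 0$, and the main inequality then reads $b_2^-(Z) = 8 \leq 4\ud + 2\cdot 5$, which forces $\ud \geq -\tfrac{1}{2}$.

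The matching upper bound is the substantive step. My plan is to extract enough of the graded module $\tHFp(\Sigma_2 \times S^1, \ft_0)$ to pin down the minimal grading of the image of $\pi_*$. Concretely, I would realize $\Sigma_2 \times S^1$ as a $0$--surgery on a homologically essential knot $K$ in an auxiliary 3--manifold $Y_0$ whose twisted Heegaard Floer groups are more tractable; a natural candidate is something of the form $T^3 \# (S^1\times S^2)^{\# 2}$, whose twisted $\HFp$ is controlled by a K\"unneth-type decomposition of the flavor appearing in the proof of \cref{p:additive}. I would then feed this presentation into the twisted surgery exact triangle of \cref{p:twisted_triangle}, using \cref{p:twistedcomputation} to identify the $\HFoo$--term associated to the $0$--surgery, and read off the absolute gradings of the towers. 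Comparing the combinatorics of the triangle with the lower bound from the first step should force $\ud = -\tfrac{1}{2}$.

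The hard part will be the bookkeeping for the twisted triangle: one has to identify which torsion \spinc structure on each surgery cobordism restricts to $\ft_0$ on $\Sigma_2 \times S^1$, unwind the $R_Y$--module structure of $M_K$ so that \cref{p:twistedcomputation} applies in the relevant form, and carefully track the absolute $\Q$--grading through the triangle -- both the cobordism--map shift $(c_1^2(\fs) - 2\chi - 3\sigma)/4$ and the degree shifts of the connecting homomorphisms. A conceptually simpler alternative would be to exhibit a negative semidefinite cobordism from $\Sigma_2 \times S^1$ to $T^3$ with vanishing intersection form, torsion $c_1$, and an injection $H_1(\Sigma_2 \times S^1; \Q) \hookrightarrow H_1(W; \Q)$; \cref{t:main_inequality} would then directly give $\delta(\Sigma_2\times S^1) \leq \delta(T^3) = 8$, i.e.\ $\ud \leq -\tfrac{1}{2}$. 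Such a cobordism should in principle be assembled from a pair of $2$--handle attachments along suitable non-separating curves in $\Sigma_2 \times S^1$, but verifying all the required conditions at once -- especially the semidefiniteness after the two surgeries -- appears comparably delicate to the Heegaard Floer computation.
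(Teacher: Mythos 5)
Your lower bound is correct and non-circular: it is precisely the argument the paper runs in the proof of \cref{p:allgenera}, using the filling of \cref{l:even} (which for $g=2$ has $b_2^-=8$ and even form) together with \cref{T:intersection form bound} to get $8\le 4\ud+2\cdot 5$. The gap is in the upper bound. A surgery exact triangle has three vertices, and your plan only controls two of them: $\Sigma_2\times S^1$ (the $0$--surgery, whose $\HFoo$ you propose to identify via \cref{p:twistedcomputation}) and the ambient manifold of the knot (handled by a K\"unneth argument). The third vertex --- the $(\lambda+\mu)$--framed surgery --- is a new $3$--manifold, and without knowing its twisted $\tHFp$ (both its correction term and the range of degrees supporting its reduced part) the triangle yields no upper bound on $\ud(\Sigma_2\times S^1)$: a priori the bottom of the tower of $\tHFp(\Sigma_2\times S^1)$ could be hit from, or map to, reduced elements of the neighbouring groups, and ruling this out is exactly the hard degree bookkeeping. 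In the paper this third vertex is $M(0,0,0,1)$, and computing it requires a cascade of three further surgery triangles (\cref{l:trefoil0,l:doubletrefoil0,l:M0111} plus the first half of the paper's own proof of \cref{p:genus2}) descending to $0$--surgeries on $T_{2,3}$ and $T_{2,3}\#T_{2,3}$, where L-space surgeries make everything computable. That cascade is the bulk of the proof and is absent from your sketch; ``comparing the combinatorics of the triangle with the lower bound'' does not replace it.

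Two further problems. First, \cref{p:twistedcomputation} computes $\HFoo(Y_0(K);M_K)=\Too\oplus\Too[1]$, i.e.\ the \emph{partially} twisted group, whereas $\ud$ is defined from the fully twisted theory in which $\tHFoo=\Too$; the triangle of \cref{p:twisted_triangle} therefore carries the wrong coefficient system on the $0$--surgered end for your purposes. The paper instead uses the variant from case (2) of the proof of \cref{T:2-handles_torsion}, in which the $0$--surgery carries fully twisted coefficients and the other two vertices acquire an extra Laurent variable. Second, your ``conceptually simpler alternative'' cannot be implemented as described: to reduce $b_1$ from $5$ to $3$ by $2$--handles one must attach along curves that are essential in $H_1(\Sigma_2\times S^1;\Q)$ (e.g.\ curves of the form $\gamma\times\{pt\}$ with $\gamma$ non-separating on the surface), and any such attachment kills $[\gamma]$ in $H_1(W;\Q)$, violating the injectivity hypothesis of \cref{t:main_inequality}. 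This is exactly why the paper's inductive cobordism for $g>2$ is attached along a \emph{null-homologous} knot and lands on $Y_{g-2}\#Y_2$ rather than on a lower-genus product.
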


The computation is lengthy and technical and we postpone it until \cref{sss:genus2}.
We first explain how it fits into the proof of \cref{p:allgenera}.

\begin{proof}[Proof of \cref{p:allgenera}]
For brevity we write $Y_g=\Sigma_g\times S^1$ and omit the unique torsion \spinc structure from the notation.
We proceed by induction on~$g$.
As mentioned in \cref{eg:T3 and S1xS2}, the computations of~$\ud(Y_g)$ for $g=0$ or~$1$ are covered in the literature, and the case $g=2$ is obtained in \cref{p:genus2} above.

Suppose now that $g > 2$. There is a cobordism from $Y_g$ to $Y_{g-2}\# Y_2$ obtained by attaching a single 2--handle along a null-homologous knot with framing~0. This is shown in \cref{f:handleslide}: in the top picture, the dashed curve represents the attaching curve of the 2--handle, and the other curves give a surgery presentation for $Y_g$; the bottom picture is obtained from the one on top by a handleslide, and it shows that the positive boundary of the cobordism is $Y_{g-2}\#Y_2$.
\begin{figure}
\labellist
\pinlabel $\overbrace{\phantom{--------------}}^{g-2}$ at 275 251
\pinlabel $\dots$ at 275 219
\pinlabel 0 at 370 166
\pinlabel 0 at 152 166
\pinlabel $\overbrace{\phantom{--------------}}^{g-2}$ at 275 103
\pinlabel $\dots$ at 275 71
\pinlabel 0 at 370 20
\pinlabel 0 at 152 20
\endlabellist
\includegraphics[scale=.7]{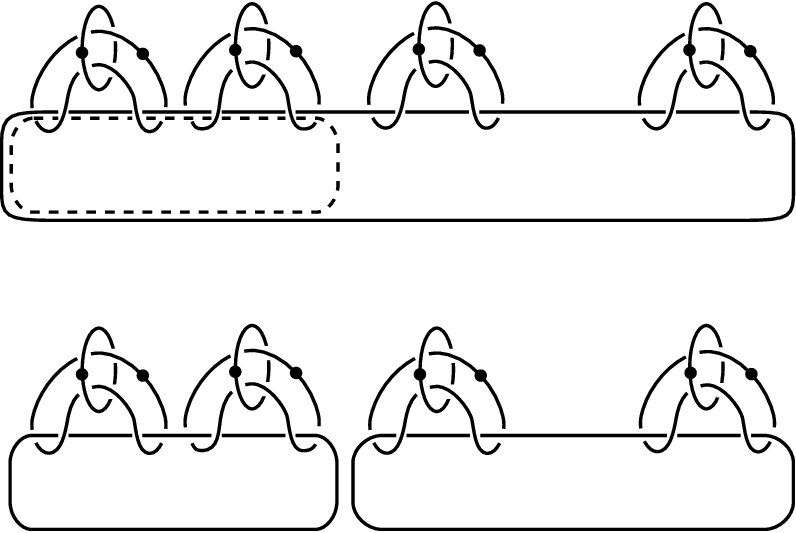}
\caption{The handleslide.}\label{f:handleslide}
\end{figure}
In particular, the assumptions of \cref{t:main_inequality} are satisfied by this cobordism, and applying additivity we get
	\begin{align*}
	2\ud(Y_g) + b_1(Y_g) 
	&\leq 2\ud(Y_{g-2}\# Y_2) + b_1(Y_{g-2}\# Y_2) \\
	&= 2\ud(Y_{g-2}) + 2\ud(Y_2) + b_1(Y_{g-2}\#Y_2),
	\end{align*}
showing that $\ud(Y_g)\le \ud(Y_{g-2})$.
On the other hand, \cref{l:even} ensures that $Y$ bounds an even, negative semidefinite $4$--manifold $Z$ with $b_2^-(Z) = 4g+4$ if $g$ is odd and $b_2^-(Z) = 4g$ if $g$ is even.
The fact that $Z$ is even implies that we can find a \spinc strucutre $\fs\in\Spinc(Z)$ with $c_1(\fs)$ torsion;
hence, applying \cref{T:intersection form bound} to~$\Zs$, we obtain
\[0 + b_2^-(Z) \le 4\ud(Y_g) + 2b_1(Y_g),\]
from which we get $\ud(Y_g) \ge \tfrac12$ for $g$ odd, and $\ud(Y_g) \ge -\tfrac12$ for $g$ even.
\end{proof}

\subsubsection{Computation of $\ud(\Sigma_2\times S^1,\ft_0)$}\label{sss:genus2}

This subsection is devoted to the proof of \cref{p:genus2}. In what follows, we will denote by $K$ the right-handed trefoil $T_{2,3}$, by $K^2$ the connected sum of two copies of $K$, i.e. $K^2 = T_{2,3}\#T_{2,3}$. Also, we will denote by $M(a,b,c,d)$ the manifold obtained by doing surgery along the framed link $\bf{L}$ in Figure~\ref{f:5-link}. Notice that the 0--framed component of $\bf{L}$ is distinguished, since it is the only component of Seifert genus 2 in the complement of the other components.
\begin{figure}
\labellist
\pinlabel $a$ at 33 100
\pinlabel $b$ at 15 80
\pinlabel $c$ at 107 100
\pinlabel $d$ at 88 80
\pinlabel $0$ at 8 20
\endlabellist
\includegraphics[scale=1]{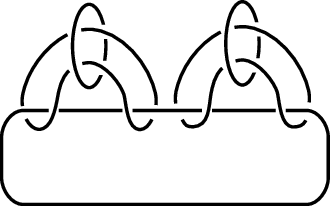}
\caption{A surgery diagram for $M(a,b,c,d)$.}\label{f:5-link}
\end{figure}
We note here the following identifications:
	\begin{equation*}
	\begin{matrix}
	M(\infty,1,1,1) \cong S^3_0(K) 
	&&
	M(0,\infty,1,1) \cong S^3_0(K)\#(S^2\times S^1) 
	\\
	M(1,1,1,1) \cong S^3_0(K^2)
	&&
	M(0,0,0,\infty) \cong T^3\#(S^2\times S^1)
	\\
	M(0,0,\infty,1) \cong T^3 
	&&
	M(0,0,0,0) \cong \Sigma_2\times S^1
	\end{matrix}
	\end{equation*}

When a $3$--manifold admits a unique torsion \spinc structure (and this is the case for all manifolds in this section, except for one, in the proof of \cref{l:doubletrefoil0}), we suppress the \spinc structure from the notation.

\begin{remark}\label{r:kunneth}
Note that the connected sum formula for Heegaard Floer homology with twisted coefficients implies that taking a connected sum with a (twisted coefficients) L-space $\Yt$ (i.e. $\tHFpred\Yt = 0$) corresponds to a degree-shift by $\ud\Yt$; in particular, since $S^2\times S^1$ is a twisted coefficients L-space with correction term~$-\tfrac12$, the groups $\tHFp(M(0,0,\infty,1))$ and $\tHFp(M(0,0,0,\infty))$ are easily computed from the corresponding groups $\tHFp(S^3_0(K_1))$ and $\tHFp(T^3)$, respectively.
These two latter groups have in fact been computed in~\cite[Lemma 8.6, Proposition 8.5]{OzsvathSzabo_4mfs_gradings_2003}.
\end{remark}

In what follows, we denote by $\bL(s)_d$ the ring $\bF[s,s\inv]$ of Laurent polynomials in the variable $s$ over the field $\bF$, supported in degree $d$; we denote by $\bL(s,t)_d$ the ring $\bF[s,s\inv,t,t\inv]$, supported in degree $d$. 
More generally, given a module $M$ over a ring~$R$ it will be convenient to write $M(s)$ for the module $M\otimes_R R[s,s\inv]$.
Also, given an element $r\in R$, we denote by $\pi_r$  the projection $M \to M/(r-1)M$.

\begin{lemma}\label{l:trefoil0}
Identify $\bF[H_2(S^3_0(K))]$ with $\bL(s)$. The plus-hat long exact sequence for the twisted Heegaard Floer homology of $S^3_0(K)$ reads:
\[
\xymatrix{
\tHFp(S^3_0(K))\ar[r]\ar^{=}[d] & \tHFhat(S^3_0(K))\ar[r]\ar^{=}[d] & \tHFp(S^3_0(K))\ar^{=}[d] \\
\Tp_{-\frac12}\oplus \bL(s)_{-\frac32}\ar[r]^-{\left(\begin{smallmatrix}0 & 1-s\\ 0 & 0\end{smallmatrix}\right)} &
\bL(s)_{-\frac12}\oplus \bL(s)_{-\frac32} \ar[r]^-{\left(\begin{smallmatrix}\pi_{s} & 0\\ 0 & 1\end{smallmatrix}\right)} &
\Tp_{-\frac12}\oplus \bL(s)_{-\frac32}
}
\]
\end{lemma}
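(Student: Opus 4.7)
The plan is to compute $\tHFp(S^3_0(K),\ft_0)$ and $\tHFhat(S^3_0(K),\ft_0)$ directly as graded $\bL(s)[U]$-modules, and then to pin down the two maps in the long exact sequence by exactness and a degree count.

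Since $S^3_0(K)$ has $b_1=1$ with $H_2$ generated by the capped-off Seifert surface, we have $R_Y=\bL(s)$, and there is a unique torsion \spinc structure $\ft_0$. For the right-handed trefoil $K=T_{2,3}$ the knot Floer complex is the standard three-step staircase; feeding this into Ozsv\'ath--Szab\'o's large-surgery mapping cone formula, with coefficients twisted by the group ring of $H_2(S^3_0(K))$, produces a small twisted chain complex whose plus-flavoured homology in the torsion \spinc structure is $\tHFp(S^3_0(K),\ft_0)\cong \Tp_{-1/2}\oplus \bL(s)_{-3/2}$. The summand $\Tp_{-1/2}$ is the image of $\tHFoo\cong \Too_{-1/2}$, whose grading is pinned down by \cref{T:twisted HF infinity}, and carries the trivial $\bL(s)$-action; the summand $\bL(s)_{-3/2}$ is $U$-torsion and free of rank one as an $\bL(s)$-module.

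For the hat version, we invoke the standard short exact sequence of twisted chain complexes given by multiplication by $U$. Its associated long exact sequence shows that the $U$-kernel of $\tHFp$ contributes $\bF_{-1/2}\oplus \bL(s)_{-3/2}$ to $\tHFhat$, while the $U$-cokernel contributes $\bL(s)_{-3/2}$ shifted up by one in grading to $\bL(s)_{-1/2}$. The two contributions at grading $-1/2$ must assemble into a single cyclic $\bL(s)$-module via the non-trivial extension $0\to \bL(s)\xrightarrow{\,\cdot(s-1)\,}\bL(s)\xrightarrow{\,\pi_s\,}\bF\to 0$, giving $\tHFhat(S^3_0(K),\ft_0)\cong \bL(s)_{-1/2}\oplus \bL(s)_{-3/2}$. (The same answer, with the same gradings, falls straight out of the $U=0$ specialisation of the surgery mapping cone.)

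The maps are then forced. The inclusion of twisted chain complexes induces the map $\tHFhat\to \tHFp$, which acts as the identity on the $\bL(s)_{-3/2}$ summand and, by the extension above, factors through the augmentation $\pi_s$ on the $\bL(s)_{-1/2}$ summand, landing on the bottom generator of $\Tp_{-1/2}$. Exactness at the right-hand $\tHFp$ then determines the connecting map $\tHFp\to \tHFhat$: it must vanish on $\Tp_{-1/2}$ (each element is $U$-divisible and hence lifts to $\tHFoo$) and must send the generator of $\bL(s)_{-3/2}$ into $\ker\pi_s=(s-1)\bL(s)_{-1/2}$. Since the connecting map has degree $+1$ and is $\bL(s)$-linear, a quick rank check forces it to be multiplication by $1-s$ on that summand (up to sign), yielding the matrix $\left(\begin{smallmatrix}0 & 1-s\\ 0 & 0\end{smallmatrix}\right)$.

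The main obstacle is the honest twisted chain-complex computation underlying $\tHFp$, and in particular the Maslov-grading bookkeeping that places the non-tower generator at degree $-3/2$ rather than $-1/2$; this requires careful tracking of how the group-ring generator $s$ enters the boundary operator through disks crossing the Seifert surface. Once this is in place, both the shape of $\tHFhat$ and the precise matrices of the two maps are essentially dictated by exactness, the degree conventions, and the rigidity of $\tHFoo\cong\Too_{-1/2}$.
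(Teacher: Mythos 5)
Your route is genuinely different from the paper's. You propose to compute $\tHFp(S^3_0(K),\ft_0)$ and $\tHFhat(S^3_0(K),\ft_0)$ directly from the trefoil's staircase complex via a twisted surgery/mapping-cone formula and then recover the two maps from exactness. The paper never computes these groups in isolation: it places the plus--hat sequences of $S^3$, $S^3_0(K)$ and $S^3_1(K)$ into a commutative grid of long exact sequences using the twisted surgery triangle and naturality, feeds in the known groups for $S^3$ and for the L-space $S^3_1(K)$ (with $d=-2$), uses that the vertical map $\tHFp(S^3_1(K))(s)\to\tHFp(S^3)(s)$ is $\pm U(1-s)$, and extracts both the groups and the matrices by a single diagram chase. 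Your approach would give a self-contained local computation; the paper's trades that for a larger diagram populated entirely by already-known Floer homologies.

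As written, however, there are two gaps. First, the entire computational content of your argument --- that $\tHFp(S^3_0(K),\ft_0)\cong\Tp_{-1/2}\oplus\bL(s)_{-3/2}$ as a graded $\bL(s)[U]$-module --- is asserted to ``fall out'' of a twisted mapping cone. You flag this yourself as the main obstacle, but without either carrying out that computation or citing a twisted version of the surgery formula and doing the grading and module-structure bookkeeping, the proof is not complete; this is precisely the content the paper's diagram chase supplies.

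Second, and independently of the first point, your claim that the extension at degree $-\tfrac12$ is \emph{forced} to be non-split is not correct as an exactness argument. Granting $\tHFp\cong\Tp_{-1/2}\oplus\bL(s)_{-3/2}$, the plus--hat sequence only gives the short exact sequence $0\to\bL(s)\to\tHFhat_{-1/2}\to\bF\to0$, and the split module $\bL(s)\oplus\bF$ is perfectly consistent with exactness everywhere: one may take the connecting map to be the inclusion of $\bL(s)_{-3/2}$ onto the $\bL(s)$ summand and the map to $\tHFp$ to be projection onto the $\bF$ summand. So the non-triviality of the extension requires additional input --- either the chain-level computation itself, or, for instance, a rank comparison with the known untwisted $\HFhat(S^3_0(K))$ via the universal coefficient spectral sequence (the split answer would force too large an untwisted group). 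Since the matrices in the statement are derived from this module structure, the gap propagates to the identification of the maps; once the module structures are in place, your determination of the maps by exactness and $\bL(s)$-linearity is fine.
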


\begin{proof}
Recall that $K$ is an L-space knot, and that in fact $S^3_1(K)$ is an L-space with $d(S^3_1(K)) = -2$. It follows that $\HFp(S^3_1(K)) = \tHFp(S^3_1(K)) = \Tp_{-2}$ and $\HFhat(S^3_1(K)) = \tHFhat(S^3_1(K)) = \bF_{-2}$.

Consider the surgery exact triangle associated to 0--surgery along $K$; since the hat-plus long exact sequence is natural with respect to cobordisms, this triangle fits into a long exact sequence of triangles as follows:
\[
\xymatrix{
& \vdots\ar[d] & \vdots\ar[d] & \vdots\ar[d] & \\
\dots\ar[r] & \tHFp(S^3)(s)\ar[r]\ar[d] & \tHFhat(S^3)(s)\ar[r]\ar[d] & \tHFp(S^3)(s)\ar[d]\ar[r] & \dots\\
\dots\ar[r] & \tHFp(S^3_0(K))\ar[r]\ar[d] & \tHFhat(S^3_0(K))\ar[r]\ar[d] & \tHFp(S^3_0(K))\ar[d]\ar[r] & \dots\\
\dots\ar[r] & \tHFp(S^3_1(K))(s)\ar[r]\ar[d] & \tHFhat(S^3_1(K))(s)\ar[r]\ar[d] & \tHFp(S^3_1(K))(s)\ar[r]\ar[d] & \dots\\
& \vdots & \vdots & \vdots &
}
\]

Observe that the horizontal maps from plus to hat have degree $+1$, the next ones have degree $0$ and the following ones have degree $-2$; that the vertical maps from $S^3_1(K)$ to $S^3$ are a sum of maps of non-negative degree, while all the other ones involving the torsion \spinc structure on $S^3_0(K)$ have degree $-\tfrac12$.
Finally, since $\tHFoo(S^3_0(K)) = \Too$, we also obtain that $\tHFp(S^3_0(K))$ contains a single tower. It follows that the vertical map $\tHFp(S^3_1(K))(s)\to\tHFp(S^3)$ is (up to a sign) multiplication by $U(1-t)$. An easy diagram chase completes the proof.
\end{proof}

\begin{lemma}\label{l:doubletrefoil0}
Identify $\bF[H_2(S^3_0(K^2))]$ with $\bL = \bL(s)$. The plus-hat long exact sequence for the twisted Heegaard Floer homology of $S^3_0(K^2)$ in the torsion \spinc structure reads:
\[
\xymatrix{
\tHFp(S^3_0(K^2))\ar[r]\ar^{=}[d] & \tHFhat(S^3_0(K^2))\ar[r]\ar^{=}[d] & \tHFp(S^3_0(K^2))\ar^{=}[d] \\
\Tp_{-\frac12}\oplus\bL_{-\frac32}\oplus\bL_{-\frac52}\ar[r]^-{\left(\begin{smallmatrix}0 & 1-s & 0\\ 0 & 0 & 1 \\ 0 & 0 & 0\\ 0 & 0 & 0\end{smallmatrix}\right)} &
\bL_{-\frac12}\oplus\bL^2_{-\frac32}\oplus\bL_{-\frac52}\ar[r]^-{\left(\begin{smallmatrix}\pi_s & 0 & 0 & 0\\ 0 & 0 & 1 & 0\\ 0 & 0 & 0 & 1\end{smallmatrix}\right)} &
\Tp_{-\frac12}\oplus\bL_{-\frac32}\oplus\bL_{-\frac52}
}
\]
\end{lemma}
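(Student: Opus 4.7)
The plan is to follow the strategy of Lemma~\ref{l:trefoil0}: set up the twisted surgery exact triangle for $0$-surgery on $K^2 \subset S^3$, combine it with the plus-hat long exact sequence via naturality of cobordism maps, and extract the answer by a diagram chase. The essential new input is an explicit computation of $\tHFp(S^3_1(K^2))$, which here coincides with $\HFp(S^3_1(K^2))$ since $H_2(S^3_1(K^2)) = 0$, and which is no longer the Floer homology of an L-space since $K^2$ is not an L-space knot.

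The first task is to determine $\HFp(S^3_1(K^2))$ and $\HFhat(S^3_1(K^2))$ as graded $\bF[U]$-modules. The K\"unneth formula for knot Floer homology gives $CFK^\infty(K^2) \cong CFK^\infty(K) \otimes CFK^\infty(K)$, which is fully explicit because $CFK^\infty(K)$ is the three-generator staircase of the trefoil. Applying the integer surgery formula of Ozsv\'ath and Szab\'o to this $3 \times 3$ tensor product yields $\HFp(S^3_1(K^2))$: a bottom tower $\Tp_{d}$ (with $d$ determined by $V_0(K^2) = 2$) together with a small reduced $U$-torsion summand arising from the interior staircase generators.

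Next I would assemble the twisted surgery triangle supplied by Proposition~\ref{p:twisted_triangle}, relating $\tHFp(S^3)(s)$, $\tHFp(S^3_0(K^2))$, and $\tHFp(S^3_1(K^2))(s)$, together with the hat analog, stacked into a commutative long exact sequence of triangles via naturality of the plus-hat sequence (exactly as in Lemma~\ref{l:trefoil0}). Carefully tracking the grading shifts --- the plus-to-hat arrow has degree $+1$, the next arrow degree $0$ and the following one degree $-2$, while the cobordism maps to and from $S^3_0(K^2)$ in its torsion \spinc structure have degree $-\tfrac{1}{2}$ --- Theorem~\ref{T:twisted HF infinity} forces $\tHFp(S^3_0(K^2))$ to decompose as a single tower plus a $U$-torsion reduced part over $\bL$, and the precise form of the reduced summands and the connecting maps is then read off by the diagram chase.

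The main obstacle is the first task: one has to resolve the $3 \times 3$ tensor product complex explicitly, keeping track of Maslov gradings and the $U$-action, rather than invoking an off-the-shelf formula for L-spaces. Once $\HFp(S^3_1(K^2))$ is in hand, the diagram chase in the remaining steps proceeds essentially mechanically along the lines of Lemma~\ref{l:trefoil0}, and the additional $\bL_{-5/2}$ summand in the claimed answer reflects the jump in Seifert genus from $g(K) = 1$ to $g(K^2) = 2$.
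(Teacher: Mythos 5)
Your plan follows the same architecture as the paper's proof: stack the twisted surgery triangle for an integer surgery on $K^2$ against the plus--hat sequence, track the degree shifts, and run the diagram chase of \cref{l:trefoil0}. The one genuine divergence is the third vertex of the triangle. The paper avoids touching $CFK^\infty(K^2)$ altogether by using the $+12$--surgery triangle, since $\HFp(S^3_{12}(K^2))$ in the relevant \spinc structure was already computed by Ozsv\'ath and Szab\'o (it equals $\bF_{-\frac34}\oplus\Tp_{-\frac34}$); you instead take the $+1$--triangle and propose to compute $\HFp(S^3_1(K^2))$ from the tensor square of the trefoil staircase via the integer surgery formula. That route is viable and more self-contained, at the cost of extra bookkeeping: since $+1$ is not a large surgery for a genus--$2$ knot, you need the full truncated mapping cone (not just $H_*(A_0^+)$) to get the reduced part right, though the formula $d(S^3_1(K))=-2V_0(K)$ does hold for all knots and correctly locates the tower.

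The concrete problem is your stated input $V_0(K^2)=2$: in fact $V_0(T_{2,3}\# T_{2,3})=1$. The tensor square of the trefoil staircase decomposes, up to homotopy equivalence, as the genus--$2$ unit staircase (the $T_{2,5}$ staircase, which has $V_0=1$) plus a ``box'' summand contributing only $U$--torsion, so $d(S^3_1(K^2))=-2$, not $-4$. This matters: the chase determines the starting degree of the tower in $\tHFp(S^3_0(K^2))$ from the tower of $\tHFp(S^3_1(K^2))(s)$ together with the $-\tfrac12$ degree shifts of the cobordism maps, so feeding in $-4$ would place the tower two degrees too low and contradict the $\Tp_{-\frac12}$ in the statement (and the consistency with $\ud(S^3_0(K))=-\tfrac12$ from \cref{l:trefoil0}). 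With the corrected value $V_0(K^2)=1$, and with the box/genus-jump accounting for the additional $\bL_{-\frac52}$ summand as you anticipate, your argument should go through.
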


\begin{proof}
This is analogous to the proof of \cref{l:trefoil0}, hence here we only outline the differences. $K^2$ is not an L-space knot, but the Heegaard Floer homology of $S^3_{12}(K^2)$ was computed in~\cite[Lemma 4.1]{OzsvathSzabo_plumbed_2003}, at least in the \spinc structure which is relevant for the computation of $\HFp(S^3_0(K^2))$, and which is relevant to us (called  $Q(0)$ in loc.\ cit.). Namely:
\[
\HFp(S^3_{12}(K^2),Q(0)) = \bF_{-\frac34}\oplus\Tp_{-\frac34}
\]
Instead of using the surgery exact triangle for $+1$--surgery, we use the triangle for twisted $+12$--surgery, where the degrees of the vertical maps are $-\tfrac94$ (from $0$--surgery to $12$--surgery), $-\tfrac{11}4$ (from $12$--surgery to $S^3$) and $-\tfrac12$ (from $S^3$ to the 0--surgery).
A diagram chase as above proves the lemma.
\end{proof}

\begin{lemma}\label{l:M0111}
$\tHFpred(M(0,1,1,1))$ is supported in degrees at most $-2$, and 
	\begin{equation*}
	\ud\big(M(0,1,1,1)\big) = -1.
	\end{equation*}
\end{lemma}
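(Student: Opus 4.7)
The plan is to compute $\tHFp(M(0,1,1,1))$ by means of the twisted surgery exact triangle of \cref{p:twisted_triangle}, applied to one of the $1$--framed components of $\mathbf{L}$, viewed as a framed knot $J$ in the ambient $3$--manifold $Y := M(\infty,1,1,1) = S^3_0(K)$. A direct linking computation in $\mathbf{L}$ shows that $J$ is null-homologous in~$Y$, so the $0$--framing on $J$ increases $b_1$ by one, and the resulting triangle reads
\begin{equation*}
\cdots \lra \tHFp(Y) \lra \tHFp\bigl(M(0,1,1,1); M_J\bigr) \lra \tHFp(M(1,1,1,1)) \lra \cdots,
\end{equation*}
where the middle term is the partially twisted group to which \cref{p:twistedcomputation} applies, while the two outer terms are the fully twisted groups of $S^3_0(K)$ and $S^3_0(K^2)$ already computed in \cref{l:trefoil0,l:doubletrefoil0}.

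The next step is to determine the absolute grading shifts of the two cobordism maps, using the standard formula $\tfrac{1}{4}(c_1^2(\fs) - 2\chi - 3\sigma)$ for the corresponding $2$--handle cobordisms (whose intersection forms vanish in our situation), and then to carry out a diagram chase analogous to those in \cref{l:trefoil0,l:doubletrefoil0}. Since $\tHFoo(M(0,1,1,1); M_J) \cong \Too \oplus \Too[1]$ by \cref{p:twistedcomputation}, this yields an explicit description of $\tHFp(M(0,1,1,1); M_J)$ as a graded $\bF[U]$--module, with two visible $\Tp$--towers whose minimal degrees can be read off directly.

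To convert this partially twisted computation into a statement about the fully twisted group $\tHFp(M(0,1,1,1))$, which is what controls $\ud$, I would exploit the rational Seifert surface for $J$: it yields a short exact sequence of $R_{M(0,1,1,1)}$--modules $0 \to R_{M(0,1,1,1)} \xrightarrow{t-1} R_{M(0,1,1,1)} \to M_J \to 0$, i.e.\ a length-one free resolution of~$M_J$. The universal coefficient spectral sequence~\eqref{eq:UCSS basic} then collapses at $E^2$ and produces a long exact sequence relating $\tHFp(M(0,1,1,1))$ and $\tHFp(M(0,1,1,1); M_J)$. Combined with $\tHFoo(M(0,1,1,1)) \cong \Too$ from \cref{T:twisted HF infinity}, this should locate the unique tower of $\tHFp(M(0,1,1,1))$ in degree~$-1$ and confine $\tHFpred(M(0,1,1,1))$ to degrees at most~$-2$. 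The main obstacle is the combined bookkeeping: tracking \spinc structures and grading shifts throughout the surgery triangle, and then inverting the long exact sequence above to reconstruct the fully twisted group from its partially twisted cousin.
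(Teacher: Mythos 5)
Your identification of the surgery triple $M(\infty,1,1,1)$, $M(0,1,1,1)$, $M(1,1,1,1)$, the input from \cref{l:trefoil0,l:doubletrefoil0}, and the degree shifts of $-\tfrac12$ all match the paper. But you run the triangle with the wrong coefficients, and this creates a genuine gap at the final step. The paper uses the Jabuka--Mark form of the twisted triangle (as in case (2) of the proof of \cref{T:2-handles_torsion}): the $b_1$--increasing vertex $M(0,1,1,1)$ carries its \emph{fully} twisted coefficients, while the extra Laurent variable $t$ is attached to the other two vertices, so the triangle reads $\tHFp(S^3_0(K))(t)\to\tHFp(M(0,1,1,1))\to\tHFp(S^3_0(K^2))(t)$. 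The hat--plus ladder chase (needed to show that $x_0\in\bL(s,t)_{-3/2}$ has nonzero image under $F^+$, which is the real content of the lemma beyond the two preceding ones) then directly controls $\tHFp(M(0,1,1,1))$ -- the group that actually defines $\ud$ -- with no reconstruction required.

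In your version, \cref{p:twisted_triangle} puts $M_J$--coefficients on the middle vertex, so the chase only computes the partially twisted group $\tHFp(M(0,1,1,1);M_J)$, and the proposed recovery of the fully twisted group from it does not go through. The resolution $0\to R\xrightarrow{\,t-1\,}R\to M_J\to 0$ and the spectral sequence \eqref{eq:UCSS basic} run in the wrong direction: they compute $\tHFp(Y;M_J)$ \emph{from} $\tHFp(Y)$, and what can be extracted in reverse is only $\ker(t-1)$ and $\coker(t-1)$ acting on $\tHFp(Y)$, up to extension. Any summand of $\tHFpred(M(0,1,1,1))$ on which $t-1$ acts invertibly is completely invisible with $M_J$--coefficients, so no bound on the support of the reduced part follows; and even the tower, although annihilated by $t-1$, could a priori meet the image of $t-1$ and hence fail to inject into $\tHFp(Y;M_J)$, so $\ud$ cannot simply be read off either. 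To salvage your route you would need an independent argument that the change-of-coefficients map is faithful on the tower and that nothing in the reduced part is $(t-1)$--invertible -- which is precisely the information the paper obtains for free by choosing the triangle in which the fully twisted group of $M(0,1,1,1)$ appears as a vertex.
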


\begin{proof}
We need to set up some notation. Let $Y=M(0,1,1,1)$; $H_2(Y)$ is generated by classes $s$ and $t$, where $s$ is represented by a capped-off Seifert surface for the marked component of $\bf{L}$, and $t$ is represented by a capped-off Seifert surface for the first component of $\bf{L}$ (i.e. the one with framing 0 in this surgery). This identifies $\bF[H_2(Y)]$ with $\bL(s,t)$.

Since $Y=M(0,1,1,1)$ fits into an surgery triangle with $S^3_0(K)=M(\infty,1,1,1)$ and $S^3_0(K^2)=M(1,1,1,1)$, we have the following long exact sequence of exact triangles, as in the proof of \cref{l:trefoil0}:
\[
\xymatrix{
& \vdots\ar[d] & \vdots\ar[d] & \vdots\ar[d] & \\
\dots\ar[r] & \tHFp(S^3_0(K))(t)\ar[r]^\alpha\ar[d] & \tHFhat(S^3_0(K))(t)\ar[r]^\beta\ar[d] & \tHFp(S^3_0(K))(t)\ar[d]\ar[r] & \dots\\
\dots\ar[r] & \tHFp(Y)\ar[r]\ar[d] & \tHFhat(Y)\ar[r]\ar[d] & \tHFp(Y)\ar[d]\ar[r] & \dots\\
\dots\ar[r] & \tHFp(S^3_0(K^2))(t)\ar[r]^{\alpha^2}\ar[d]^-{F^+} & \tHFhat(S^3_0(K^2))(t)\ar[r]^{\beta^2}\ar[d]^-{\hat F} & \tHFp(S^3_0(K^2))(t)\ar[r]\ar[d]^-{F^+} & \dots\\
& \vdots & \vdots & \vdots &
}
\]
Notice that the ``new'' variable $t$ is the one associated with the \emph{second} Seifert surface, since the first Seifert surface generates the homology of $S^3_0(K)$ and $S^3_0(K^2)$.

Notice that, since $\tHFpred(S^3_0(K))$ is supported in degree $-\tfrac32$ and the map from $\tHFp(S^3_0(K))$ to $\tHFp(Y)$ has degree $-\tfrac12$, the image of $\tHFp(S^3_0(K))(t)$ in $\tHFpred(Y)$ is supported in degrees at most $-2$. In order to prove the statement, it is therefore enough to prove that the image of $\tHFpred(Y)$ in $\tHFp(S^3_0(K^2))(t)$ is supported in degrees at most $-\tfrac52$ since the map $\tHFp(Y)\to\tHFp(S^3_0(K^2))(t)$, too, has degree~$-\tfrac12$.
This is in turn equivalent to showing that the vertical map starting from $\bL(s,t)_{-\frac32}\subset\tHFp(S^3_0(K^2))$ is nonzero. Let $x_0 := 1\in\bL(s,t)_{-\frac32}$.

For degree reasons, the image $F^+(x_0)$ of $x_0$ in $\tHFp(S^3_0(K))$ lies in the reduced part, which is a copy of $\bL(s,t)$. Hence, it is torsion if and only if it vanishes; our assumption becomes that $F^+(x_0) = 0$.

Observe that the map $F^+$ restricts to multiplication by $\pm1$ on the tower $\Tp(t)$, as in the proof of \cref{l:trefoil0}. Since the bottom-most element of the tower is in the image of $\beta^2$, by commutativity of the diagram, it follows that the restriction of $\hat F$ to the subspace $\bL(s,t)_{-\frac12}$ of $\tHFhat(S^3_0(K^2))$ does not vanish.

For the same reason, since $\alpha^2(x_0)$ lies in the same subspace, we obtain that $\alpha(F^+(x_0))$ does not vanish either, hence $F^+(x_0)\neq 0$, as required.

Finally, notice that the image of $F^+$ cannot contain the bottom-most element of the tower of $\tHFp(S^3_0(K))$: the restriction of $F^+$ onto the tower of $\tHFp(S^3_0(K^2))$ certainly does not, and the reduced part is supported in 
lower degrees.
\end{proof}

\begin{proof}[Proof of \cref{p:genus2}]
The argument here will be similar to the one seen in the proofs of the three lemmas above.
We first claim that $\tHFpred(M(0,0,1,1))$ is supported in degrees at most $-\tfrac32$ and that $\ud(M(0,0,1,1)) = -\tfrac32$: in fact, $M(0,0,1,1)$ fits into a surgery triple with $M(0,1,1,1)$ and $M(0,\infty,1,1)\cong S^3_0(K)\#(S^2\times S^1)$, which gives the following long exact sequence in Heegaard Floer homology:
\[
\dots\to\tHFp(M(0,\infty,1,1))(t)\to\tHFp(M(0,0,1,1))\to\tHFp(M(0,1,1,1))(t)\to\dots
\]
Here $t$ is the new generator corresponding to the new class, and needs not be confused with the $t$ used above.

However, combining \cref{l:trefoil0} and \cref{r:kunneth} we obtain that the reduced part of the leftmost group is supported in degrees at most $-2$ and the same holds for the rightmost group, thanks to \cref{l:M0111}. The same argument as above shows that $\tHFpred(M(0,0,1,1))$ is supported in degrees at most $-\tfrac32$ and allows for the computation of $\ud$.

An analogous computation, combined with~\cite[Proposition 8.5]{OzsvathSzabo_4mfs_gradings_2003}, shows that $\tHFpred(M(0,0,0,1))$ is supported in negative degrees and that $\ud(M(0,0,0,1)) = 0$; this time, however, the map from the tower of $\tHFp(M(0,0,1,1))(t)$ is no longer injective, so one needs to be more careful. Let us now look at the surgery triple involving $M(0,0,0,0)$, $M(0,0,0,1)$ and $M(0,0,0,\infty)$: we have
\[
\dots\to\tHFp(M(0,0,0,\infty))(t)\to\tHFp(M(0,0,0,0))\to\tHFp(M(0,0,0,1))(t)\to\dots
\]
and, as above, using \cref{r:kunneth} and the computation of $\tHFp(T^3)$, we conclude the proof of the proposition.
\end{proof}

\subsubsection{Intersection forms of fillings of $\Sigma_g\times S^1$}
With \cref{T:intersection form bound_characteristic version,T:surface times circle} at our disposal, we have a concrete restriction for intersection forms of smooth fillings of~$\Sigma_g\times S^1$ on which we now elaborate.
Note that since $H_1(\Sigma_g\times S^1)$ is torsion-free, the non-degenerate intersection form~$S_Z$ of any filling~$Z$ is unimodular. 
Furthermore, if~$Z$ is smooth and~$S_Z$ is negative definite, which we henceforth assume, then its shadow colength is bounded by $\bar s(S_Z)\le 8\left\lceil\tfrac{g}{2}\right\rceil$.
%
As before we write~$S_Z$ as~$S_0\oplus\bs I_r$ with $S_0$ minimal.
What are the possibilities for~$S_0$?
Obviously, the trivial form is realised by~$\Sigma_g\times D^2$.
In the proof of \cref{p:allgenera} we constructed a filling~$Z_g$ with a more interesting intersection form which we now determine.
\begin{example}\label{eg: Gamma fillings}
Recall that the vectors of the form $e_i+e_j$ and $\tfrac12(e_1+\dots+e_n)$ in~$\R^n$ generate a lattice~$\Gamma_{n}\subset\R^{n}$ which is 
	unimodular for~$n=4k$, 
	even for~$n=8k$,
	and odd for~$n=8k+4$.
Moreover, one can show that $\Gamma_{4k}$ is irreducible (hence minimal) and satisfies
	$\bar s(\Gamma_{8k})=\bar s(\Gamma_{8k+4})=8k$.
Recall that~$Z_g$ was obtained as the complement of a genus~$g$ surface in $\CP\#(4g+5)\CPbar$ in the homology class 
	$x=(2g+1)h-g(e_1+\dots+e_4)-(e_5+\dots+e_{4g+5})$.
By \cref{p:gamma} below $S_{Z_g}$ is isomorphic to~$\Gamma_{4g+4}$.
Moreover, for any~$h<g$ we can extend~$Z_h$ to a filling of~$Z_g$ by adding a cobordism as in the proof of \cref{p:allgenera}. 
By blowing up these fillings we can realise the forms~$\Gamma_{4h+4}\oplus\bs I_r$ with $h\le g$ and~$r\ge 0$ by smooth fillings of~$\Sigma_{g}\times S^1$. 
\end{example}
\begin{lemma}\label{p:gamma}
Fix a positive integer $g$ (not necessarily odd), and let $\langle x\rangle$ be the the sublattice of $H_2(\CP\#(4g+5)\CPbar)$ generated by the class:
\[x=(2g+1)h-g(e_1+e_2+e_3+e_4)-(e_5+\dots+e_{4g+5}).\]
The lattice $Q = \langle x\rangle^\perp/\langle x\rangle$ is of type $\Gamma_{4g+4}$.
\end{lemma}
\begin{proof}
For brevity we let $n=4g+4$.
Since $Q$ is the intersection form of a $4$--manifold whose boundary has torsion-free homology, it is unimodular (and in particular integral). Moreover, $Q$ is a \emph{root lattice}, since the associated vector space is generated by the set $\mathcal{R}$ of \emph{roots} (i.e. elements of square $-2$)
	\begin{equation*}
	\mathcal{R} = 
		\{e_1-e_2, \,
		  e_2-e_3, \,
		  e_3-e_4, \,
		  e_5-e_6,
		  \dots, \,
		  e_{n}-e_{n+1}, \, 
		  h-e_1-e_2-e_5\}.
	\end{equation*}

\[
\xygraph{
!{<0cm,0cm>;<1cm,0cm>:<0cm,1cm>::}
!{(0,2)}*+{\bullet}="a"
!{(0,0)}*+{\bullet}="b"
!{(1.6,1) }*+{\bullet}="d"
!{(3.2,1) }*+{\bullet}="e"
!{(4.8,1) }*+{\bullet}="h"
!{(7.2,1) }*+{\bullet}="f"
!{(8.8,1) }*+{\bullet}="g"
!{(1.6,1.5) }*+{e_2-e_3}
!{(3.2,0.6) }*+{h-e_1-e_2-e_5}
!{(4.8,1.5) }*+{e_5-e_6}
!{(0,2.4) }*+{e_1-e_2}
!{(6,1)}*+{\cdots}
!{(0,-0.4) }*+{e_3-e_4}
!{(7.2,0.6) }*+{e_{n-1}-e_{n}}
!{(8.8,1.5) }*+{e_{n}-e_{n+1}}
"d"-"e"
"e"-"h"
"a"-"d"
"d"-"b"
"f"-"g"
}
\]

The roots in $\mathcal{R}$ intersect according to the Dynkin diagram~$D_n$ shown above; therefore, $\mathcal{R}$ generates a sublattice $Q'\subset Q$ of index~2 isomorphic to the lattice $D_n$. We think of $D_n$ as sitting in $\R^n$ (with orthonormal basis $\{f_1,\dots,f_n\}$), generated by the roots $\{f_1+f_2, f_1-f_2,\dots,f_{n-1}-f_n\}$.

In particular, the two ``short legs'' of the Dynkin diagram are $f_1+f_2$ and $f_1-f_2$. Recall that there are only two unimodular overlattices of $D_n$ up to isomorphism: $\Z^n$ and $\Gamma_n$, both sitting in $\R^n$; see~\cite[Section 1.4]{Ebeling_latticesandcodes_3rd_2013}. 
The overlattice is $\Z^n$ if and only if it contains $f_1$, i.e. only if it contains half of the sum of the two ``short legs'' of the Dynkin diagram, and it is $\Gamma_n$ otherwise. Since the action of the Weyl group on the set of fundamental sets of roots is transitive, we may assume that the isomorphism between $Q'$ and $D_n$ identifies the two chosen bases. The two short legs of $\mathcal{R}$ are $e_1-e_2$ and $e_3-e_4$, and their sum $y$ is represented by vectors $y + X$ none of which divisible by 2 in $Q$: if $k$ is odd $\langle h,y+kx\rangle$ is odd, and if $k$ is even $\langle e_1, y+kh\rangle$ is odd. Hence $Q$ is isomorphic to $\Gamma_n$.
\end{proof}

For fillings of~$T^3$ and~$\Sigma_2\times S^1$ we have~$\bar{s}(S_Z)=\bar{s}(S_0)\leq 8$.
As mentioned at the end of \cref{ch:intersection_forms}, this leaves 14 possibilities for~$S_0$, assuming that it is non-trivial (see Elkies' list in~\cite{Elkies_trivial_lattice_1995}*{p.326}). 
Among these we find~$\Gamma_8\cong E_8$ and~$\Gamma_{12}$ from \cref{eg: Gamma fillings} above.
It is well-known that~$E_8$ is the only non-trivial even lattice of rank at most~8.
As an immediate consequence we get a slightly stronger version of \cref{T:fillings of T3}.

\begin{corollary}\label{T:fillings of T3_almost even}
Let~$Z$ be a smooth filling of~$T^3$ or~$\Sigma_2\times S^1$ with~$S_Z$ negative definite of the form~$S_0\oplus \bs I_r$ with~$S_0$ even.
Then~$S_0$ is either trivial or isomorphic to~$E_8$ and both cases occur.
\end{corollary}

All other lattices in Elkies' list are odd of rank at least~12.
We have seen in \cref{eg: Gamma fillings} that $\Gamma_{12}$ is realised by a smooth filling of~$\Sigma_2\times S^1$.
However, we do not know whether it also appears for~$T^3$; \cref{T:intersection form bound_characteristic version} does not provide any obstruction in this case.
We therefore ask:

\begin{question}
Can $\Gamma_{12}$ be realised by smooth fillings of~$T^3$?
\end{question}

Of course, there is the more general question which odd lattices in Elkies' list, if any, appear for~$T^3$.
This should be compared to Fr{\o}yshov's work on fillings of the Poincaré sphere~\cite{Froyshov_SW_boundary_1996}*{Proposition~2}.

Our findings about $T^3$ and $\Sigma_2\times S^1$ leave the possibility that \cref{T:intersection form bound_characteristic version} is the only obstruction for realising even lattices.
In order to test this we consider $\Sigma_3\times S^1$ and $\Sigma_4\times S^1$ for which $\bar{s}(S_Z)\leq 16$.
The only non-trivial even lattices of rank at most~16 are~$E_8$, $E_8\oplus E_8$, and~$\Gamma_{16}$ (see~\cite{ConwaySloane_spherepackings_3rd_1999}*{Table~16.7}).
In \cref{eg: Gamma fillings} we have realised all but~$E_8\oplus E_8$.

\begin{question}
Can $E_8\oplus E_8$ be realised by a smooth filling of $\Sigma_3\times S^1$ or $\Sigma_4\times S^1$?
\end{question}

In the same spirit one may wonder which of the 24 even lattices of rank 24 (see \cite{ConwaySloane_spherepackings_3rd_1999}*{Ch.~18} for a list) are realised by fillings of~$\Sigma_5\times S^1$ or $\Sigma_6$.
For example:

\begin{question}
Can the Leech lattice be realised by a smooth filling of $\Sigma_5\times S^1$?
\end{question}

Another way to look at this question is which is the simplest $3$--manifold that can appear on the boundary of a smooth $4$--manifold whose non-degenerate intersection form is a given lattice, for example the Leech lattice. 
In this special case $\Sigma_5\times S^1$ might be considered a satisfactory answer.

If we increase the genus further, thereby further weakening the bound on~$\bar{s}(S_Z)$, we soon enter uncharted algebraic territory. 
As mention above, up to shadow colength~24 there is a manageable list of even unimodular lattices and the number of minimal lattices is known to be finite but possibly large~\cite{Gaulter_characteristic_Stuff_2007}*{Ch.~5}. 
Beyond this range the number of lattices allowed by \cref{T:intersection form bound_characteristic version} explodes, rendering any attempt of an enumeration extremely difficult if not impossible. 
As a consequence, without further obstructions there is little hope for a classification of all lattices that can appear as non-degenerate intersection forms of fillings of $3$--manifolds with large $\delta$--invariants.

\subsection{Embeddings into closed $4$--manifolds}

In this section, $P$ will be a fixed integral homology sphere such that $d := d(P,\ft_0) \neq 0$ for the unique \spinc structure $\ft_0$ on $P$. In what follows, the \spinc structure will be omitted from the notation whenever possible; also, $nP$ will denote the connected sum of $|n|$ copies of $P$ or $-P$ (i.e. $P$ with the reversed orientation) depending on the sign of~$n$.
For example, the Poincar\'e sphere satisfies these requirements, since $d(S^3_{+1}(T_{2,3})) = -2$.

\begin{proposition}
Fix a $3$--manifold $Y$ and a closed, smooth $4$--manifold $X$ with definite intersection form. 
There exists an integer~$N$, depending only on~$Y$, such that $Y_n=Y\#nP$ does not embed in~$X$ as a separating hypersurface for~$|n| > N$. 
\end{proposition}

\begin{proof}
Notice that the statement is independent on the orientation of $P$, thus we can pick the orientation of $P$ for which $d>0$. In particular, as $P$ is an integral homology sphere, $d$ is an even integer, hence $d\ge 2$. Similarly, we can assume that~$X$ is negative definite.

For each $n$ there is an isomorphism $\Spinc(Y) \to \Spinc(Y_n)$ defined by $\ft \mapsto \ft\#\ft_n$ that carries torsion \spinc structures to torsion \spinc structures, where $\ft_n$ is the unique \spinc structure on $nP$.

Now assume that there is a separating embedding $Y_n\hra X$. 
Let $Z$ and~$Z'$ be the closures of the connected components of $X\setminus Y_n$, labelled so that $\del Z = -\del Z' = Y_n$.
Notice that both $Z$ and~$Z'$ are negative semidefinite so that 
\cref{T:positive delta}
yields $\delta(Y_n) \ge 0$ and also $\delta(-Y_n)\geq 0$.
Since correction terms are additive, so is $\delta$, hence
\[
0\le \delta(Y_n) = \delta(Y)+4nd
\eqand
0\le \delta(-Y_n) = \delta(-Y)-4nd 
\]
It follows that $-\delta(Y)/4d\leq n\le \delta(-Y)/4d$.
We can now choose
\[
N = \left\lfloor\max\left\{\frac{\delta(Y)}8, \frac{\delta(-Y)}8\right\}\right\rfloor.\qedhere
\]
\end{proof}

Note that if $\delta(Y)$ and $\delta(-Y)$ are both negative, then $N$ can be chosen to be $-1$, hence $Y_n$ never embeds in a closed definite $4$--manifold.

\begin{example}
Let $Y=\Sigma_g\times S^1$ 
and $P$ the Poincar\'e homology sphere; since $\delta(Y) = 8\lceil \tfrac{g}2\rceil$ and $d(P) = -2$, we get that $Y\#nP$ does not embed in a negative definite $4$--manifold as a separating hypersurface if $n>\lceil \tfrac{g}2 \rceil$.
In particular, when $Y$ is either the $3$--torus $T^3$ or $\Sigma_2\times S^1$, this shows that $Y\#nP$ cannot be embedded in a negative definite $4$--manifold $X$ whenever $n\ge 2$.
\end{example}

\bibliography{pack/correctionterms}


\end{document}